\newcommand{\co}{\mskip0.5mu\colon\thinspace}
\newtheorem{theorem}{Theorem}[section]
\newtheorem{lemma}[theorem]{Lemma}
\newtheorem{proposition}[theorem]{Proposition}
\newtheorem{corollary}[theorem]{Corollary}
\newtheorem{defn}[theorem]{Definition}
\theoremstyle{remark}
\newtheorem{remark}[theorem]{Remark}
\def\dfn{\emph}
\def\base{\bf{\Sigma}}
\def\ds{\displaystyle}
\newcommand{\R}{\mathbb{R}}
\newcommand{\whf}{\widehat{F}}
\newcommand{\whs}{\widehat{\Sigma}}
\def \bs {\backslash}
\def \x {\times}
\def \eu{{\text{e}}}
\newcommand{\CC}{{\mathbb C}}
\newcommand{\RR}{{\mathbb R}}
\newcommand{\jJ}{{\mathcal J}}
\newcommand{\uU}{{\mathcal U}}
\newcommand{\p}{\partial}
\newcommand{\defin}[1]{\textbf{#1}}
\newcommand{\paper}{_P}
\newcommand{\spine}{_\Sigma}
\newcommand{\crit}{^{\operatorname{crit}}}
\newcommand{\Lie}{{\mathcal L}}
\begin{document}

\title[Contact $3$-manifolds with arbitrarily large Stein fillings]
{Families of contact $3$-manifolds \\ with arbitrarily large Stein fillings}

\author[R. \.{I}. Baykur]{R. \.{I}nan\c{c} Baykur}
\address{Refik {I}nan\c{c} Baykur \newline 
\indent Max Planck Institut f\"ur Mathematik, Bonn, 53111, Germany \newline
\indent Department of Mathematics, Brandeis University, Waltham, MA 02453, USA}
\email{baykur@mpim-bonn.mpg.de, baykur@brandeis.edu}

\author[J. Van Horn-Morris]{Jeremy Van Horn-Morris \\ \\ (with an appendix by Samuel Lisi and Chris Wendl )}
\address{Jeremy Van Horn-Morris \newline 
\indent Department of Mathematics, Stanford University, \newline 
\indent Building 380, Stanford, CA 94305,  USA }
\email{jvanhorn@math.stanford.edu }

\address{Samuel Lisi \newline 
\indent Universit\'e Libre de Bruxelles, D\'epartement de Math\'ematiques \newline
\indent CP 218, Boulevard du Triomphe, B-1050 Bruxelles, Belgium}
 \email{samuel.lisi@ulb.ac.be}
 
\address{Chris Wendl \newline 
\indent Department of Mathematics, University College London, \newline
\indent Gower Street, London WC1E 6BT, United Kingdom}
\email{c.wendl@ucl.ac.uk}

\begin{abstract}
We show that there are vast families of contact $3$-manifolds each member of which admits infinitely many Stein fillings with arbitrarily big euler characteristics and arbitrarily small signatures ---which disproves a conjecture of Stipsicz and Ozbagci. To produce our examples, we set a framework which generalizes the construction of Stein structures on allowable Lefschetz fibrations over the $2$-disk to those over any orientable base surface, along with the construction of contact structures via open books on $3$-manifolds to spinal open books introduced in \cite{LVW}.

\end{abstract}

\maketitle

\setcounter{secnumdepth}{2}
\setcounter{section}{0}


\section{Introduction} 

Understanding the topology of possible Stein fillings of a fixed contact $3$-manifold has been an active line of research in the past couple of decades. By now it is known that there are contact $3$-manifolds which admit no Stein filling, as well as a unique Stein filling, or many, and even infinitely many ones, up to diffeomorphisms. However, all examples of Stein fillings of a fixed contact $3$-manifold known up to date bore the same curious aspect: their characteristic numbers constitute a finite set. Andras Stipsicz conjectured that the set of signatures and euler characteristics of all possible Stein fillings of a closed contact $3$-manifold is finite \cite[Conjecture 1.2]{S}. The same conjecture was also formulated by Burak Ozbagci and Andras Stipsicz for the euler characteristics alone \cite[Conjecture 1.2]{OS1}; \cite[Conjecture 12.3.16]{OS}, and more specifically as the euler characteristics being bounded above \cite[Conjecture 1.3.9]{OS}. There are many examples of Stein fillable contact structures for which the finiteness of both characteristic numbers is seen to hold true; those on $3$-manifolds which are non-flat circle bundles over orientable surfaces \cite{S}, or those which admit compatible planar open books \cite{Ka} are a few. Our main theorem however, disproves this conjecture, for all of its flavors:

\begin{theorem} \label{mainthm}
There are infinite families of contact $3$-manifolds, where each contact $3$-manifold admits a Stein filling whose euler characteristic is larger and signature is smaller than any two given numbers. 
\end{theorem}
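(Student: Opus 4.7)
The plan is to realize each contact 3-manifold in the family as the boundary of many different allowable Lefschetz fibrations that share a common boundary spinal open book, exploiting the framework the paper establishes. Two allowable Lefschetz fibrations with matching spinal open books on their boundaries then give Stein fillings of the same contact 3-manifold, and distinguishing them by $(\chi,\sigma)$ will yield the theorem.

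First, I would fix data for a spinal open book on a closed 3-manifold $Y$: an oriented base surface $\Sigma$ for the Lefschetz fibration, a surface $F$ with boundary to serve as the page, and boundary monodromy data specifying the spine. By the generalized construction referenced in the abstract, any allowable Lefschetz fibration $W \to \Sigma$ with page $F$ whose boundary monodromy agrees with this data will carry a Stein structure inducing a contact boundary $(Y,\xi)$ that depends only on the spinal open book data, not on the interior vanishing cycles.

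Second, for this fixed boundary data, I would produce an infinite sequence of positive factorizations $\rho_n$ of the boundary monodromy into right-handed Dehn twists, with the number of factors growing linearly in $n$. The natural mechanism is to iterate a substitution identity in the mapping class group of $F$ that trades a short product of positive Dehn twists for a longer one (for instance a lantern or chain relation, applied on a subsurface of $F$ that can be embedded repeatedly). Each factorization $\rho_n$ yields a Stein filling $W_n$ of $(Y,\xi)$, and since a Lefschetz fibration with base $\Sigma$, page $F$, and $k$ vanishing cycles satisfies $\chi = \chi(\Sigma)\chi(F) + k$, we immediately obtain $\chi(W_n) \to \infty$.

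The hard part will be controlling the signature: adding vanishing cycles does not automatically decrease $\sigma$, and substitution identities can go either way. I would compute $\sigma(W_n)$ via a Meyer-cocycle formula adapted to Lefschetz fibrations over general bases, and choose the substitution so that each iteration strictly decreases $\sigma$ by a definite amount. A robust way to force this is to arrange the newly introduced vanishing cycles to bound Lagrangian subsurfaces in the filling, guaranteeing a negative local signature contribution; alternatively, one can verify the effect of a single substitution in a small model and then show the global contribution inherits the same sign. Once both monotonicities are in place, the fillings $W_n$ are necessarily pairwise non-diffeomorphic, and varying the initial choice of spinal open book data produces the vast families of contact 3-manifolds asserted in the statement.
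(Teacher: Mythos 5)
Your high-level framework---realize the fillings as allowable Lefschetz fibrations over a fixed non-disk base, all inducing the same spinal open book at the boundary, and distinguish them by $(\chi,\sigma)$---does match the paper. But the mechanism you propose for growing $\chi$ is not the paper's and contains a genuine gap. You suggest iterating a lantern or chain substitution on a subsurface of the page. A single lantern substitution changes the number of vanishing cycles by $\pm 1$, so you would need to iterate without bound; but after replacing $t_x t_y t_z$ by $t_{a_0}t_{a_1}t_{a_2}t_{a_3}$ the triple $t_x t_y t_z$ is gone from the word, so the same substitution does not immediately re-apply, and since $F$ is compact you cannot hide infinitely many disjoint substitution subsurfaces in it. Monodromy-substitution tricks of exactly this flavor were used earlier (e.g.\ by Ozbagci--Stipsicz, cited in the paper) to get infinitely many Stein fillings of a fixed contact $3$-manifold, yet those fillings had bounded Euler characteristics---which is precisely why the conjecture you are trying to disprove survived. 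You also ignore the $h$ commutator factors in the monodromy factorization over a genus-$h$ base, and those commutators are in fact the crux of the construction.

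The paper's mechanism is different: it imports a family of relations from \cite{BKM} in $\Gamma_g^1$ of the schematic form $t_\delta^{2-2h} = C_1\cdots C_{h-1}\,C(m)\,T_1^m T_2^m$, where the $T_i$ are fixed products of positive Dehn twists, $C_1,\dots,C_{h-1}$ are fixed commutators, and the single commutator $C(m)$ \emph{varies with $m$} so as to absorb the growing power $T_1^m T_2^m$. This produces, for each $m$, a genus-$g$ Lefschetz fibration over a closed genus-$h$ surface with only non-separating vanishing cycles, strictly increasing critical locus, and a section of fixed self-intersection $n$; removing fibered neighborhoods of a regular fiber and that section yields allowable Lefschetz fibrations over $\Sigma_h^1$, all filling the same spinal open book on the graph manifold $Y_{g,h,n}$. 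Finally, the signature is controlled not by a Lagrangian-subsurface argument but by Endo's hyperelliptic signature formula for $g=2$, which gives $\sigma$ linear and strictly decreasing in $m$. To repair your proposal you would need to replace the substitution mechanism with something that provably yields unbounded $\chi$ for a fixed boundary, and to make the signature monotonicity precise.
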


\enlargethispage{0.5in}

Let us call a Lefschetz fibration on a $4$-manifold ``allowable'', if its base and regular fibers are connected, compact surfaces with \textit{non-empty} boundaries, and if each vanishing cycle is homologically non-trivial in the fiber. Following the works of Eliashberg and Gompf on handle decompositions of compact Stein manifolds, Loi and Piergallini, proved that any Stein domain admits a Lefschetz fibration structure \cite{LP} (and an alternative proof was later given by Akbulut and Ozbagci \cite{AO}). Moreover, the Stein structure on an allowable Lefschetz fibration can be chosen so that the contact structure it induces on the boundary agrees with the one that the Thurston-Winkelnkemper construction would hand when applied to the natural open book induced by the Lefschetz fibration on the boundary. We will use an extension of this result to Lefschetz fibrations over \textit{arbitrary compact surfaces} (that is orientable surfaces with any number of boundary components and of any genera) filling the same contact structure on the $3$-manifold boundary induced by a \textit{generalized} open book structure: roughly speaking, we will use a decomposition of a $3$-manifold as a certain ``plumbing'' of a surface bundle over disjoint union of circles and circle bundles over arbitrary surfaces, where the surfaces in the former and latter collections have the same topology, respectively. These generalized open books are introduced and studied in \cite{LVW} under the name \textit{spinal open books}, which we will adopt here. Note that when we have a surface bundle over a circle and a circle bundle over a $2$-disk, this is the usual open book decomposition of a $3$-manifold, and thus, exists on all $3$-manifolds. We prove the following theorem using handle decompositions and convex surface theory:

\begin{theorem} \label{thm2} 
If $f: X \to \Sigma$ is an allowable Lefschetz fibration with bounded fiber (where $\Sigma$ is any compact surface with non-empty boundary), then $X$ admits a Stein structure. Moreover, the Stein structures on any two allowable Lefschetz fibrations filling the same spinal open book can be chosen so that they induce the same contact structure on the boundary.
\end{theorem}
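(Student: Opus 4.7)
The plan is to extend the Loi-Piergallini \cite{LP} and Akbulut-Ozbagci \cite{AO} construction from the disk-base case to an arbitrary bounded base $\Sigma$ by a cut-and-paste argument. Since $\p\Sigma \neq \emptyset$, fix a handle decomposition $\Sigma = \Sigma_0 \cup h_1 \cup \cdots \cup h_k$ in which $\Sigma_0 = \sqcup_\alpha D_\alpha$ is a disjoint union of 2-disks and each $h_j$ is a 2-dimensional 1-handle, arranged so that all Lefschetz critical values of $f$ lie in the interior of $\Sigma_0$. Over each disk $D_\alpha$, the restriction $f^{-1}(D_\alpha) \to D_\alpha$ is an allowable Lefschetz fibration over a disk, so by \cite{LP, AO} it carries a Stein structure inducing on its boundary the Thurston-Winkelnkemper contact structure compatible with the natural open book. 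Over each 1-handle $h_j$, the restriction is trivial, $f^{-1}(h_j) \cong F \times h_j \cong F \times D^2$, which admits a product Stein structure obtained by realizing $F$ and $D^2$ as Stein domains in $\CC$, with a corresponding product-type contact structure on the boundary.

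The heart of the argument is to glue these Stein pieces along their common boundary chunks $f^{-1}(\p^- h_j) \cong (F \times I) \sqcup (F \times I)$. On each $F \times I$ the two adjacent sides induce standard-looking contact structures: a slice of a Thurston-Winkelnkemper contact structure on the disk side, and a slice of the product contact structure on the 1-handle side. Using convex surface theory---in particular, normal-form results for convex hypersurfaces in contact 3-manifolds together with the Legendrian realization principle---I would arrange that the two contact germs agree in a neighborhood of each $F \times I$. Once germs match, the adjacent Stein pieces can be glued via Weinstein collars, producing a Stein structure on all of $X$ after iterating over the 1-handles $h_j$.

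For the second assertion, the contact structure produced on $\p X$ is by construction assembled from pieces that are Thurston-Winkelnkemper contact structures on the $F$-bundle regions over $\p\Sigma$ and product contact structures on the $\p F$-bundle regions over $\Sigma$, glued together in precisely the way required to be adapted to the spinal open book of $f$. Since contact structures adapted to a given spinal open book are unique up to isotopy by \cite{LVW}, two Stein fillings constructed this way for any two allowable Lefschetz fibrations sharing a spinal open book induce isotopic contact structures on their boundaries. The main obstacle I expect is the germ-matching step: the Thurston-Winkelnkemper contact structure on a disk piece depends on Morse data on the fiber and the combinatorics of the local vanishing cycles, while the product contact structure on $F \times D^2$ comes from a plurisubharmonic function, and matching these at the germ level---rather than merely up to global isotopy---along each $F \times I$ requires careful use of the flexibility available in both constructions together with detailed tracking of framings (page framing, Thurston-Bennequin framing of vanishing cycles, Lefschetz framing) to ensure Stein-compatibility survives the gluing. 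Once germs match, the Weinstein-collar gluing itself is standard.
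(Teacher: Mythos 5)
Your high-level strategy coincides with the paper's: decompose the base $\Sigma$ so that the preimage of the union of $0$-handles is a PALF to which Loi--Piergallini/Akbulut--Ozbagci applies, then re-attach the preimages of the $1$-handles of $\Sigma$ (each diffeomorphic to $F\times D^2$) and argue that the contact structure at the boundary is the one supported by the spinal open book. The paper phrases exactly this as: cut $\Sigma$ along a collection of decomposing arcs to get a disk, take the PALF Stein structure, and then reverse the cuts one arc at a time (the ``spinal tap'' in reverse). So far, so good.

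The genuine gap is in the step ``once germs match, the adjacent Stein pieces can be glued via Weinstein collars.'' That is not a valid operation as stated. You are gluing two Weinstein domains $W_1 = f^{-1}(\Sigma_0)$ and $W_2 = f^{-1}(h_j)\cong F\times D^2$ not along their entire contact boundaries, but along a codimension-zero piece $(F\times I)\sqcup(F\times I)$ of each boundary. Along this region, the Liouville vector fields of $W_1$ and $W_2$ both point \emph{outward} from their respective domains, hence in opposite directions at the gluing locus; there is no ``Weinstein collar'' that absorbs both, and germ-matching of the boundary contact structures does not resolve the conflict. Gluing two Liouville domains along part of their boundaries simply does not yield a Liouville domain without further input. (This is precisely the point of Avdek's Liouville connect sum and of the careful analysis in the paper -- it is not a formality.)

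What the paper does instead, and what your argument is missing, is to realize the re-attachment of $f^{-1}(h_j)$ as a \emph{Stein cobordism} attached to the boundary $Y' = \partial(f^{-1}(\Sigma'))$ rather than as a gluing of two fillings. The key content is the lemma inside Proposition~\ref{prop:unfolding}: cutting $(Y,\xi)$ along a foldable convex spinal-tap surface $S = F_1\cup -F_2$ and folding is the inverse of attaching one Weinstein $1$-handle plus $b_1(F)$ Weinstein $2$-handles; the $2$-handles are attached along a Legendrian link $\mathcal{L}$ (coming from an arc decomposition of $F$ extended across $S$) with $\mathrm{tb}(\ell_i)$ equal to the surface framing minus one, so that contact $(+1)$-surgery is topological $0$-surgery and the reverse attachment is a Weinstein $2$-handle attachment. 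This identification of $F\times D^2$, attached along $F\times\partial D^1\times D^1$, with a $4$-dimensional $1$-handle and $b_1(F)$ $2$-handles with the correct Legendrian attaching data, \emph{is} the proof; it cannot be replaced by ``match germs, then glue.'' Your observation that framings (page framing, $\mathrm{tb}$, Lefschetz framing) must be tracked is exactly where the content lies, but your proposal does not actually establish the $\mathrm{tb} = \mathrm{pf} - 1$ condition on the attaching circles, nor does it identify which circles they are -- and without that, Stein-ness is not obtained. Finally, the appeal to uniqueness from \cite{LVW} for the second assertion is fine, provided the first part is proved so that the constructed boundary contact structure really is the one supported by the spinal open book; the paper secures this by verifying, at each fold, that the resulting contact structure remains compatible with the new spinal open book.
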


\noindent This result was known to Sam Lisi and Chris Wendl, who provide their proof, which is a variation of a technique of Gompf and Thurston, in the appendix to this article. Combined with Loi and Piergallini's stronger result on the existence of allowable Lefschetz fibrations (over the $2$-disk) on compact Stein manifolds, this theorem generalizes, in the obvious way, the characterization of Stein manifolds in terms of the Lefschetz fibrations they can be equipped with. (See Corollary~\ref{LPgeneralization}.)

The organization of our article is as follows: 

We discuss spinal open books and the natural contact structures we associate to them in Section~\ref{Spinal}. These parallel the descriptions in \cite{LVW} and in the appendix. For completeness, we show, using convex surface theory, that there is a unique choice of a compatible contact structure on a given spinal open book (Propositions \ref{prop:uniqueframed} and \ref{prop:welldefined}). Discussing the handle decompositions and induced Stein cobordisms for building an allowable Lefschetz fibration over an arbitrary compact surface with non-empty boundary, we prove Theorem~\ref{thm2} in the same section using a cut-and-paste operation we call \emph{folding} (or a \emph{spinal tap} in the case of a spinal open book). Our techniques have the same flavor as those used in \cite{Baldwin} and mimic the construction in \cite{Avdek}.

Section~\ref{ConstructLFs} is where we present our families of examples for Theorem~\ref{mainthm}. Our main examples will be the graph manifolds $Y(g,h,n)$ prescribed by the surgery diagram in Figure~\ref{graph}, for which we will define a distinguished contact structure $\xi_{Y(g,h,n)}$ via the framed spinal open book on it. Here, for each triple of integers $g \geq 2, h \geq 1, n \leq 2h-2$, we produce infinite families of Stein fillings of contact $3$-manifolds $(Y(g,h,n), \xi_{g,h,n})$, by constructing infinite families of Lefschetz fibrations, whose euler characteristics can be chosen to be arbitrarily big. We will also show that the Stein fillings of $(Y(2,h,n), \xi_{2,h,n})$ can be chosen so that they have arbitrarily small (negative) signatures. All these examples are derived from special families of Lefschetz fibrations on \textit{closed} $4$-manifolds (Theorem~\ref{LFs}), which are built using relations in the mapping class groups of surfaces with boundaries after \cite{BKM}. Lastly, we outline how to get similar families of Stein fillings of a fixed contact structure on more general $3$-manifolds, so as to illustrate that the contact $3$-manifolds $(Y(g,h,n), \xi_{g,h,n})$ above are nowhere close to being special in this sense.

\enlargethispage{0.5in}

\newpage
\section{Preliminaries} \label{Prelim}

Here we review the background material we will use and generalize in the later sections. All manifolds in this article are assumed to be compact, smooth and oriented, whereas the maps between them are always smooth.

\subsection{Lefschetz fibrations and mapping class groups} \

A \dfn{Lefschetz fibration} is a surjective map $f\colon\, X\to \Sigma$, where $X$ and $\Sigma$ are $4$- and $2$-dimensional compact manifolds, respectively, such that $f$ fails to be a submersion along a discrete set $C$, and around each \textit{critical point} in $C$ it conforms to the local model $f(z_1,z_2)=z_1 z_2$, compatible with orientations. If the regular fiber $F$ has genus $g$ and $\Sigma$ has genus $h$, we say that $(X,f)$ is a genus $g$ Lefschetz fibration over a genus $h$ surface. The critical points arise from attaching $2$--handles to regular fibers with framing $-1$ with respect to the framing induced by the fiber. We will refer to these $2$--handles as \dfn{Lefschetz handles}. We will assume that each \emph{singular fiber} contains only one critical point, which can be achieved after a small perturbation of any given Lefschetz fibration. When there are no critical points, $f: X \to \Sigma$ is nothing but a surface bundle over a surface, so $f$ always restricts to a surface bundle over $\Sigma \setminus f(C)$ on $X \setminus f^{-1}(f(C))$ and, in particular, over $\partial \Sigma$ on $\partial X$. The reader is advised to turn to \cite{GS} for a detailed treatment of Lefschetz fibrations via handlebody decompositions.

We will call a Lefschetz fibration \dfn{allowable}, if both the base and the regular fiber have non-empty boundaries, and if no fiber contains a \textit{closed} embedded surface. In the literature, allowable Lefschetz fibrations over the $2$-disk are called \dfn{PALFs}, ``positive allowable Lefschetz fibrations'', where positivity emphasizes the orientation preserving local model we prescribed for the Lefschetz singularities. 

Let $\Sigma_{g,r}^s$ denote a compact oriented surface of genus $g$ with $s$ boundary components and $r$ marked points in the interior. The \emph{mapping class group\,}, $\Gamma_{g,r}^s$, of  $\Sigma_{g,r}^s$ is the group of isotopy classes of orientation-preserving self-diffeomorphisms of $\Sigma_{g,r}^s$, which are compactly supported in the interior of $\Sigma_{g,r}^s$, and fixing $r$ marked points and the points on the boundary. For simplicity, we write $\Sigma_{g,r} = \Sigma_{g,r}^0$, $\Sigma_{g}^s = \Sigma_{g,0}^s$ and $\Sigma_{g} = \Sigma_{g,0}^0$. We also use the similar simplified notation for the corresponding mapping class groups. It is well-known that $\Gamma_{g,m}^r$ is generated by positive (right-handed) Dehn twists along non-separating curves. 

For a smooth surface bundle $f\colon E\to \Sigma$ with fibers $\Sigma_g^s$, the {\em monodromy representation} of $f$ is defined to be the map $\Psi \colon \pi _1(\Sigma)\to \Gamma _g^s$  relative to a fixed identification $\varphi$ of $F$ with the fiber over the base point of $\Sigma$: For each loop $\gamma \colon I\to \Sigma$ the bundle $f_\gamma \colon \gamma^* (E)\to I$ is canonically trivial, inducing a diffeomorphism $f_\gamma^{-1}(0)\to f_\gamma^{-1}(1)$ up to isotopy. Using $\varphi$ to identify $f_\gamma^{-1}(0)$ and $f_\gamma^{-1}(1)$ with $F$, we get the element $\Psi (\gamma)\in \Gamma _g$. Changing the identification $\varphi$ changes $\Psi$ by a conjugation with an element of $\Gamma_g$. We will use the functional notation for the mapping class group: i.e. for $f_1,f_2\in\Gamma_g$, the product $ f_1f_2$ means that we first apply $f_2$ and then $f_1$ --- thus the map $\Psi \colon \pi _1(\Sigma)\to \Gamma _g$ is an anti-homomorphism.

A genus--$g$ Lefschetz fibration $f \colon X\to\Sigma$ with a regular fiber $F \cong \Sigma_g$ can be defined combinatorially using the \emph{monodromy representation} $\Psi\colon \pi_1(B \setminus f(C)) \to \Gamma_g^r$, which determines $f$ up to isomorphism (and $X$ up to diffeomorphism), provided $g \geq 2$. (This is due to the fact that for $g\geq 2$ the space of self-diffeomorphisms of $F$ isotopic to the identity is contractible.) Importantly, isotopy type of a surface bundle over $S^1$ with fiber $F$ is determined by the return map of a flow transverse to the fibers, which can be identified with an element $\mu \in \Gamma_g$, called \dfn{monodromy} of this fibration over $S^1$.

It turns out that the monodromy of a Lefschetz fibration $f\colon X\to D^2$ over the disk with a single
critical point is a right Dehn twist\index{Dehn twist} along the vanishing cycle creating the singular
fiber. Therefore, the monodromy of a Lefschetz fibration $f\colon X \to \Sigma _h$ with $n$ critical points is given by a factorization of the identity element $1\in \Gamma _g$ as  
\begin{equation} \label{monodromyfactorization}
1=\prod _{i=1}^n t_{v_i}\prod _{j=1}^h [\alpha_j, \beta_j] \, ,
\end{equation}
where $v_i$ are the vanishing cycles of the singular fibers and $t_{v_i}$ is the positive Dehn twist about $v_i$. This factorization of the identity is called the \emph{monodromy factorization}. Here the mapping classes $a_i$ and $b_i$ specify the monodromies along a free generating system $\langle \alpha_1, \beta_1, \ldots , \alpha_h, \beta_h \rangle$ of $\pi_1 (\Sigma_h^1)$ such that $\prod_{i=1}^h [\alpha_i, \beta_i]$ is parallel to the boundary component of $\Sigma_h^1$. In particular, when there are no $t_{v_i}$ in the factorization, this prescribes a surface bundle. Conversely, a word
\[ 
w=\prod _{i=1}^n t_{v_i}\prod _{j=1}^h [\alpha_j, \beta_j] \]
prescribes a Lefschetz fibration over $\Sigma_h^1$, and if $w=1$ in $\Gamma_g^s$ we get a Lefschetz fibration $X\to \Sigma_h$.

For a Lefschetz fibration $f: X\to \Sigma$, a map $\sigma  \colon \Sigma  \to X$ is called a \emph{section} if $f \circ \sigma = id_{\Sigma}$. Suppose that a fibration $f\colon X\to \Sigma$ admits a section $\sigma$. Set $S = \sigma(\Sigma) \subset X$. This section $S$ provides a lift of the representation $\Psi: \pi_1 (\Sigma \setminus f(C)) \to \Gamma_g$ to the mapping class group $\Gamma _{g,1}$. One can then fix a disk neighborhood of this section preserved under the monodromy, and get a lift to $\Gamma_g^1$. Conversely, every such representation with a lift determines a fibration with a section: Gluing a disk with a marked point to a surface with one boundary component along the boundary and by extending self-diffeomorphisms of the surface by the identity on the disk, we obtain a surjective homomorphism $\Gamma ^1_g\to \Gamma _{g,1}$, whose kernel is freely generated by the right Dehn twist $t_\delta$ along a simple closed curve $\delta$ parallel to the boundary. If the factorization
 \[
 1=\prod _i t_{v_i} \prod _j [\alpha_j, \beta_j]
 \]
lifts from $\Gamma _g $ to a similar factorization in $\Gamma _{g,1}$, then the corresponding fibration has a section. Moreover, if we lift this product to $\Gamma _g^1$ we get
  \[
  t_\delta ^m=\prod _i t_{v'_i} \prod _j [\alpha'_j, \beta'_j]
   \]
for some $m$. Here, $t_{v'_i}$ is a Dehn twist mapped to $t_{v_i}$ under $\Gamma _g^1 \to \Gamma _g$. Similarly, $\alpha_j'$ and $\beta'_j$ are mapped to $\alpha_j$ and $\beta_j$, respectively. An elementary observation is that the power $m$ of $t_{\delta}$ in the above factorization in $\Gamma _g^1$ is the negative of the self-intersection number of the section $S$ that we obtain. 

These observations generalize in a straightforward fashion to the case when we have $r$ \emph{disjoint} sections $S_1, \ldots, S_r$, corresponding to $r$ marked points captured in the mapping class group $\Gamma_{g,r}^s$.

\subsection{ Open book decompositions} \

An \dfn{open book decomposition} $\mathcal{B}$ of a $3$--manifold $Y$ is a pair $(K,f)$ where $L$ is an oriented link in $Y$, called the \dfn{binding}, and $f\co Y \setminus K \to S^1$ is a fibration such that  $f^{-1}(t)$ is the interior of a compact oriented surface $F_t \subset Y$ and $\partial F_t=K$ for all $t \in S^1$. The surface $F=F_t$, for any $t$, is called the \dfn{page} of the open book. The \dfn{monodromy} of an open book is given by the return map of a flow transverse to the pages and meridional near the binding, which is an element $\mu \in \Gamma_{g, m}$, where $g$ is the genus of the page $F$, and $m$ is the number of components of $K =\partial F$. Equivalently, and more fitting with our later definitions, we can think of an open book decomposition as a decomposition of $Y = \mathcal{P} \cup \mathcal{S}$, where $\mathcal{P}$ is the fiber bundle $f:\mathcal{P} \to S^1$ (as before) with compact fibers, $\mathcal{S}$ is a union of solid tori $S^1 \x \{D^2_1, \dots, D^2_m\}$ (the neighborhoods of the binding components $K$) and each meridian disk $p \x D^2$ intersects the boundary of the fibers of $f$ in a single point. (Notice that up to isotopy, $D^2$ is determined by the topology of $\mathcal{S}$.)

Suppose we have a Lefschetz fibration $f\co X \to D^2$ with bounded regular fiber $F$, and let $p$ be a regular value in the interior of the base $D^2$. Composing $f$ with the radial projection $D^2 \setminus \{p\} \to \partial D^2$ we obtain an open book decomposition on $\partial X$ with binding $\partial f^{-1}(p)$. Identifying $f^{-1}(p) \cong F$,
we can write 
\[\partial X=(\partial F\times D^2)\cup f^{-1}(\partial D^2) \, .\]
Thus we view $\partial F\times D^2$ as the tubular neighborhood of the binding $K=\partial f^{-1}(p)$, and the fibers over $\partial D^2$ as its \dfn{truncated pages}. The monodromy of this open book is prescribed by that of the fibration. In this case, we say that the open book $(K, f|_{\partial X \setminus K})$ is \dfn{filled by}, or \dfn{induced by}, the Lefschetz fibration $(X,f)$. Any open book whose monodromy can be written as a product of positive Dehn twists can be filled by a Lefschetz fibration over the $2$-disk.

We can think of the second definition of an open book in this language as well. As a Lefschetz fibration, the boundary of $X$ inherits a K\"unneth-like decomposition consisting of vertical and horizontal boundaries (as viewed by $f$). In that case the fibered region $\mathcal{P}$ is the vertical boundary of $f$, $f^{-1} (\partial D^2)$, and $\mathcal{S}$ is the horizontal boundary, which is the (trivial) bundle of boundary circles $\partial F_t$ over $D^2$. As a bundle, we think of this as $f|_{\partial F}$. Each component of $\mathcal{S}$ is the topologically $S^1 \x D^2$ and there is a unique isotopy class of section which trivializes the bundle. 

\subsection{ Contact structures and compatibility } \

A $1$--form $\alpha \in \Omega^1(Y)$ on a $(2n{-}1)$--dimensional oriented manifold $Y$ is called a \dfn{contact form} if it satisfies $\alpha \wedge (d\alpha)^{n-1} \neq 0$. A \dfn{co-oriented contact structure} on $Y$ is then a hyperplane field $\xi$ which is globally written as the kernel of a contact $1$--form $\alpha$. In dimension three, this is equivalent to asking $d\alpha$ to be nondegenerate on the plane field $\xi$.

A contact structure $\xi$ on a $3$--manifold $Y$ is said to be \dfn{supported by an open book} $\mathcal{B}=(K,f)$ if $\xi$ is isotopic to a contact structure given by a $1$--form $\alpha$ satisfying $\alpha>0$ on positively oriented tangents to $K$ and $d\alpha$ is a positive volume form on every page. When this holds, we say that the open book $\mathcal{B}$ is \dfn{compatible with the contact structure} $\xi$ on $Y$. It is a classical result of Thurston and Winkelnkemper \cite{TW} that any open book admits such a contact structure (where the ``compatibility'' definition is due to Giroux).

Considering contact $3$--manifolds as boundaries of certain $4$--manifolds together with various compatibility conditions has been an active research topic in low dimensional topology. From the contact topology point of view,
it is the study of different types of \dfn{fillings} of a fixed contact manifold. In dimension four, there are essentially two considerations. Let $(X,\omega)$ be a symplectic $4$-manifold with cooriented nonempty boundary $Y=\partial X$. If there exists a \emph{Liouville vector field} $\nu$ defined on a neighborhood of $\partial X$ pointing out along $\partial X$, then we obtain a positive contact structure $\xi$ on $\partial X$, which can be written as the kernel of contact $1$--form $\alpha = \iota_{\nu} \omega|_{\partial X}$. When this holds, we say $(Y, \xi)$ is the \dfn{$\omega$--convex boundary} or \dfn{strongly convex boundary} of $(X, \omega)$. (When $\nu$ points inside, we say $(Y, \xi)$ is the \dfn{$\omega$--concave boundary of $(X, \omega)$}.)

Now if $(X, J)$ is almost-complex, then the complex tangencies on $Y=\partial X$ give a uniquely defined oriented hyperplane field. It follows that there is a $1$--form $\alpha$ on $Y$ such that $\xi = Ker \alpha$. We define the \dfn{Levi form} on $Y$ as $d\alpha|_{\xi}(\cdot, J \cdot)$. If this form is positive definite then $(Y, \xi)$ is said to be \dfn{strictly $J$--convex boundary of $(X,J)$}, and if it is $J$--convex for an unspecified $J$ (for instance when $J$ is tamed by a given symplectic form), we say $(Y, \xi)$ is \dfn{strictly pseudoconvex boundary}. If $(X, \omega, J)$ is an almost-K\"ahler manifold, i.e. a manifold equipped with a symplectic form $\omega$ and a compatible almost-complex structure $J$, then it can be shown that strict pseudoconvexity of the boundary is equivalent to the condition
that ${\omega|}_{\xi } > 0$ . 

For detailed and comparative discussions of these concepts, as well as proofs of some facts mentioned in the next subsection, the reader can turn to \cite{ElGr} and \cite{E}. For further basic notions from contact topology of $3$--manifolds such as Legendrian knots, Thurston--Bennequin framing, which appears below, the text of Ozbagci--Stipsicz \cite{OS1} or that of Geiges \cite{Geiges} would be valuable sources.

\subsection{ Stein manifolds} \

A smooth function $\psi\co X \to \R$ on a complex manifold $X$ of real dimension $2n$ is called \dfn{strictly plurisubharmonic} if $\psi$ is strictly subharmonic on every holomorphic curve in $X$. We call
a complex manifold $X$ \dfn{Stein}, if it admits a proper strictly plurisubharmonic function $\psi\co X \to [0, \infty)$ (\,after Grauert \cite{Gra}). Thus a compact manifold $X$ with boundary which is equipped with a complex structure in its interior is called \dfn{compact Stein} if it admits a proper strictly plurisubharmonic function which is constant
on the boundary.

Given a function $\psi\co X \to \R$ on a Stein manifold, we can define a $2$--form $\omega_{\psi} = -dJ^* d\psi$. It turns out that $\psi$ is a strictly plurisubharmonic function if and only if the symmetric form $g_{\psi}(\cdot, \cdot) = \omega_{\psi}(\cdot, J \cdot)$ is positive definite. So every Stein manifold $X$ admits a K\"ahler structure $\omega_{\psi}$, for any strictly plurisubharmonic function $\psi\co X \to [0, \infty)$. It is easy to see that the restriction of $\omega_{\psi}$ to each level set $\psi^{-1}(t)$ gives a Levi form on $\psi^{-1}(t)$, implying that all nonsingular level sets of $\psi$ are strictly pseudoconvex hypersurfaces. Thus one can equivalently
call a Stein manifold a \dfn{strictly pseudoconvex manifold}. Moreover, it was observed in \cite{ElGr} that the gradient vector field of $\psi$ defines a (global) Liouville vector field $\nu=\nabla_{\psi}$, making all nonsingular level sets $\omega_{\psi}$--convex. Hence, Stein manifolds exhibit strongest filling properties for a contact manifold which can
be realized as their boundary. Given contact $3$-manifold $(Y,\xi)$, we will call  Stein surface $(X,J)$ a \dfn{Stein filling} of $(Y, \xi)$ if $\partial X = Y$ and $J|_Y$ induces the contact structure $\xi$. 

In this article, we are mainly interested in compact Stein surfaces with convex boundaries, up to diffeomorphisms. A topologist's characterization of these manifolds in terms of Weinstein structures (cf. \cite{Weinstein}, \cite{CiEl}) follows from the work of Eliashberg and Gompf:

\begin{theorem} [Eliashberg \cite{El1}, Gompf \cite{Go2}]
\label{Eliashberg}
A smooth oriented compact $4$--manifold with boundary is a Stein surface, up to orientation preserving diffeomorphisms, if and only if it has a handle decomposition $X_0 \cup h_1 \cup \ldots \cup h_m$, where  $X_0$ consists of \,$0$-- and \,$1$--handles and each $h_i$, $1\leq i \leq m$, is a $2$--handle attached to 
\[ X_i= X_0 \cup h_1 \cup \ldots \cup h_i \]
along a Legendrian circle $L_i$ with framing $tb(L_i) -1 $.
\end{theorem}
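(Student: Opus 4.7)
The plan is to prove the two directions separately: the sufficiency (Gompf) by a constructive handle-by-handle argument, and the necessity (Eliashberg) by extracting a Morse-theoretic handle decomposition from a strictly plurisubharmonic function.

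For Gompf's direction, assume the prescribed handle decomposition and build a Stein structure inductively. Each $0$-handle is modeled as the standard Stein $4$-ball $\{|z|^2 \leq 1\} \subset \CC^2$ with strictly plurisubharmonic function $\psi(z) = |z|^2$; each $1$-handle is realized as a regular neighborhood of an arc in $\CC^2$ joining two such balls, using a standard plurisubharmonic interpolation. This equips $X_0$ with a Stein structure whose boundary $\#^k(S^1 \x S^2)$ carries the standard tight contact structure. For each $2$-handle $h_i$, apply Eliashberg's local Stein $2$-handle model: in a standard contact neighborhood of a Legendrian knot $L_i \subset \partial X_{i-1}$ one has an explicit plurisubharmonic extension over the $2$-handle \emph{provided} the attaching framing equals $tb(L_i) - 1$, since this framing is precisely what allows the Liouville vector field determined by $\psi$ to extend outward on the new upper boundary. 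Iterating the attachments produces a Stein structure on $X = X_m$.

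For Eliashberg's direction, begin with a proper strictly plurisubharmonic function $\psi \co X \to [0,\infty)$ constant on $\partial X$; after a $C^\infty$-small perturbation preserving plurisubharmonicity, assume $\psi$ is Morse. Strict plurisubharmonicity forces $\mathrm{Hess}(\psi)$ to restrict to a form of positive trace on every $J$-invariant subspace, which by a direct linear-algebra argument bounds the Morse index at every critical point by $n = 2$ (any negative subspace of real dimension $>2$ would contain a $J$-invariant $2$-plane of negative trace). Thus $X$ admits a handle decomposition using only $0$-, $1$-, and $2$-handles. For each index-$2$ critical point, the attaching circle lies on the strictly pseudoconvex level set just below it, and the descending disk provides the natural $2$-handle framing; a local model computation at the critical point shows this framing differs from the contact framing of the Legendrian push-off by exactly $-1$.

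The principal technical obstacle on the Eliashberg side is arranging the attaching circles to be genuinely Legendrian rather than merely smoothly embedded on the pseudoconvex level sets. One uses the Legendrian approximation theorem to isotope them, through circles on the level set, into Legendrian position, and combines this with Legendrian stabilization (which decreases $tb$ by one) to pin down the exact framing prescription. On the Gompf side, the delicate step is gluing the local plurisubharmonic models across handle attachments into a single globally defined plurisubharmonic function; this relies on standard convex interpolation of plurisubharmonic functions and smoothing of the corners introduced at each attaching stage.
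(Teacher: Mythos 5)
The paper does not actually prove this statement: it is quoted as a background result with citations to Eliashberg and Gompf, so there is no internal proof to compare against. Judged against the standard argument in the literature, your sketch of the Gompf (``if'') direction is essentially correct: build the Stein skeleton on the $0$- and $1$-handles, then use the Eliashberg/Weinstein local model to extend plurisubharmonicity over a $2$-handle, a step that works precisely when the attaching framing equals $tb(L_i) - 1$.

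The Eliashberg (``only if'') direction as you wrote it, however, has a genuine gap. The index bound via the Levi form is fine, but the real geometric content is that when you use the gradient of the (Morse, strictly plurisubharmonic) function $\psi$ with respect to the K\"ahler metric $g_\psi(\cdot,\cdot) = \omega_\psi(\cdot, J\cdot)$ — i.e.\ the Liouville vector field — the descending disk of an index-$2$ critical point is Lagrangian, so its boundary on a level set is \emph{automatically} Legendrian, and the disk framing is \emph{automatically} the contact (Thurston--Bennequin) framing minus one. This is not a ``technical obstacle'' to be repaired afterward; it is the theorem. Your proposed fix — Legendrian $C^0$-approximation followed by stabilization to ``pin down the exact framing'' — cannot work in this direction, because it is circular: you need to know what $tb$ to target, but computing the framing is precisely what you are trying to prove. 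Moreover, Legendrian approximation produces representatives with arbitrarily negative $tb$ rather than a canonical one, so it does not select a framing at all. That tool belongs to the \emph{Gompf} direction (realizing a prescribed smooth framing as $tb - 1$ for some Legendrian), not to reading off the framing from an existing Stein structure.
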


\begin{theorem} [Loi--Piergallini \cite{LP}, also see Akbulut--Ozbagci \cite{AO}] \label{PALF}
An oriented compact $4$--manifold with boundary is a Stein surface, up to orientation preserving diffeomorphisms, if and only if it admits an allowable Lefschetz fibration over the $2$-disk, a.k.a ``PALF''. Moreover, any two allowable Lefschetz fibrations over the $2$-disk filling the same open book carry Stein structures which fill the same contact structure (induced by the open book). 
\end{theorem}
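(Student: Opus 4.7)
The plan is to use Theorem~\ref{Eliashberg} as a bridge, so that establishing the equivalence reduces to matching Stein handle decompositions with PALF structures on the same 4-manifold, and then verifying that the contact structure induced on the boundary is independent of the choice.

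For the PALF $\Rightarrow$ Stein direction, I would induct on a handle decomposition inherited from the Lefschetz fibration $f\co X\to D^2$. Start with a neighborhood $F\times D^2$ of a regular fiber: since $F=\Sigma_g^s$ has $s\geq 1$ boundary components it retracts onto a 1-complex, so $F\times D^2$ is tautologically Stein, built from a $0$-handle and some $1$-handles, and the induced open book on its boundary (pages $F$, trivial monodromy) is compatible with the induced contact structure. Each Lefschetz critical point contributes a $2$-handle attached along the vanishing cycle $v_i$, which lies on a page $F$, with framing $-1$ relative to the page framing. Making the page convex and applying Legendrian realization on a convex surface, the curve $v_i$ can be realized as a Legendrian with $tb(v_i)$ equal to the page framing, so the attaching framing is $tb(v_i)-1$; by Theorem~\ref{Eliashberg} the $2$-handle is a Stein handle. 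Iterating proves that $X$ is Stein, and the contact structure on $\partial X$ is the one compatible with the induced open book (by Giroux's construction).

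For the Stein $\Rightarrow$ PALF direction, start from Eliashberg's decomposition $X=X_0\cup h_1\cup\cdots\cup h_m$. The sub-handlebody $X_0$ (0- and 1-handles) admits a trivial PALF over $D^2$ whose fiber is built by banding disks (one band per 1-handle in $X_0$), with no critical points; the induced open book on $\partial X_0$ is compatible with the contact structure. Inductively, each $h_i$ is a $2$-handle attached along a Legendrian $L_i\subset\partial X_{i-1}$ with framing $tb(L_i)-1$. The key move is to use positive stabilization of the open book on $\partial X_{i-1}$ --- realized on the PALF side as a Hopf-band plumbing, i.e.\ simultaneously attaching a $1$-handle to the fiber and introducing a new Lefschetz critical point with vanishing cycle running once over this $1$-handle --- enough times so that $L_i$ can be isotoped through the supported contact structure onto a single page, and so that the page framing of $L_i$ coincides with $tb(L_i)$. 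This uses Giroux's correspondence and a Legendrian realization/convex surface argument (as in Akbulut--Ozbagci's version). Once $L_i$ lies on a page with the correct framing, adding $h_i$ with framing $tb(L_i)-1 = (\text{page framing})-1$ is precisely a Lefschetz $2$-handle attachment with vanishing cycle $L_i$. This extends the PALF over $X_i$, and iterating produces the desired PALF on all of $X$.

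For the \textbf{moreover} statement, both constructions above are compatibility-preserving: the Stein structure built on a PALF induces the contact structure supported by the induced open book, and conversely the PALF built from a Stein structure fills the open book compatible with $\xi$. Thus two PALFs filling the same open book $(K,f)$ produce Stein structures whose boundary contact structures are both compatible with $(K,f)$, and by Giroux's uniqueness of a compatible contact structure on a given open book, they must be isotopic.

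The hard part is the stabilization step in the Stein $\Rightarrow$ PALF direction: given an arbitrary Legendrian $L_i$ in the boundary of the PALF built so far, one must positively stabilize the open book enough times so that $L_i$ sits on a single page with page framing exactly $tb(L_i)$, while simultaneously realizing every stabilization as an explicit modification of the PALF (Hopf plumbing). The bookkeeping between open-book stabilizations, Legendrian isotopies, and framings is where the Loi--Piergallini and Akbulut--Ozbagci arguments do most of their work, and is the only place where genuinely nontrivial contact-geometric input (Giroux's theorem plus convex surface theory) enters.
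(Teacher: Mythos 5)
The paper cites this theorem without proof, so there is no argument in the body to compare against; your sketch follows the Akbulut--Ozbagci handle-theoretic route and is essentially correct at that level of detail. (Loi--Piergallini's original proof of the Stein $\Rightarrow$ PALF direction is quite different, going through branched covers of $B^4$.) A genuinely different proof of the PALF $\Rightarrow$ Stein direction, generalized to allowable Lefschetz fibrations over arbitrary bounded bases, appears in the appendix by Lisi and Wendl: they fix a fibration-compatible almost complex structure $J$, build a fiberwise $J$-convex function by patching together $J$-convex functions on the fibers (including explicit local models at the critical points), and then globalize via the Thurston--Gompf trick of adding a large multiple of a $j$-convex function pulled back from the base. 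That plurisubharmonic approach is better adapted to controlling the Stein structure up to homotopy (they show the space of admissible $J$-convex functions is contractible), while your handle induction is more elementary and makes the boundary compatibility more transparent, which is what the ``moreover'' clause needs.

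Two points in your sketch deserve to be made explicit. First, the Legendrian realization of $v_i$ on a convex page with twisting zero relative to the page framing applies only to \emph{non-isolating} curves; this is precisely where allowability enters, since a nullhomologous vanishing cycle can bound a subsurface disjoint from $\partial F$ and hence be isolating, in which case no Legendrian representative with the page framing need exist. Second, the ``moreover'' argument rests on the claim that the handle-by-handle Stein construction produces, at each stage, a contact structure supported by the open book on $\partial X_i$; this requires a lemma (standard, but not free) that attaching a Weinstein $2$-handle along a Legendrian lying on a page with page framing equal to $tb$ changes the supported open book by composing the monodromy with the corresponding positive Dehn twist. Without that inductive statement, invoking Giroux uniqueness at the end would not be justified.
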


\subsection{Convex surfaces} \

In this article, we will make extensive use of convex surface theory, which we review briefly here. For details and proofs, see \cite{Ho}. A surface $S$ in a contact $3$-manifold $Y$ is \emph{convex with Legendrian boundary} if any boundary component of $S$ is tangent to the contact planes and there is a vector field $X$ defined in a neighborhood of $S$ that is positively transverse to $S$ and which preserves the contact planes. In that case, we assume that $X$ is transverse to $\xi$ and let $S_+$ denote the set of points for which $X$ is positively transverse to $\xi$, $S_-$ the set of points where $X$ is negatively transverse to $\xi$, and $\Gamma$ the set where $X$ is tangent to $\xi$. $\Gamma$ is then a collection of properly embedded, simple closed curves which separate $S_+$ and $S_-$ called the \emph{dividing set}. 

\begin{theorem}[Giroux \cite{Gi1}, Honda \cite{Ho}] \label{thm:giroux flexibility} For a convex surface $S$ with Legendrian boundary, the subsets $S_+$ and $S_-$ are embedded submanifolds whose boundary constitute a collection of properly embedded circles $\Gamma$. Further, the isotopy class of $\xi$ in a neighborhood of $S$ is determined by $\Gamma$.
\end{theorem}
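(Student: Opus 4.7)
The plan is to exploit the contact vector field $X$ to rectify a collar of $S$, producing an $X$-invariant normal form for $\xi$, and then to read off both assertions from this model.

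First I would flow $S$ along $X$ to obtain coordinates $(p,t)\in S\times(-\varepsilon,\varepsilon)$ with $X=\partial_t$. Because $X$ preserves $\xi$, the hyperplane field is $t$-invariant, and after rescaling one has a defining contact form $\alpha=\beta+u\,dt$, where $\beta\in\Omega^1(S)$ and $u\in C^\infty(S)$ are pulled back from $S$. In these coordinates $\alpha(X)=u$, so the locus where $X\in\xi$ is exactly $\Gamma=u^{-1}(0)$, while $S_{\pm}=\{\pm u>0\}$.

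To show that $\Gamma$ is a smooth, properly embedded $1$-submanifold (and therefore that $S_{\pm}$ are codimension-zero submanifolds meeting along $\Gamma$), I would expand the contact condition to
\[
\alpha\wedge d\alpha=\bigl(u\,d\beta+\beta\wedge du\bigr)\wedge dt>0.
\]
At any point of $\Gamma$, where $u=0$, this forces $\beta\wedge du$ to be a positive area form on $T_pS$, so $du|_p\neq 0$ and the implicit function theorem produces $\Gamma$ as a transverse zero locus. Near the Legendrian boundary $\partial S$ the same computation in a standard Legendrian collar yields that $\Gamma$ meets $\partial S$ transversally, so $\Gamma$ is properly embedded.

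For the flexibility statement I would prove that the germ of $\xi$ along $S$ depends only on $\Gamma\subset S$ by a Moser argument. Given $\Gamma$, a standard model on $S\times(-\varepsilon,\varepsilon)$ is built by picking a function $u_0$ vanishing transversally along $\Gamma$ (with the correct signs on $S_{\pm}$) and a $1$-form $\beta_0$ with $u_0\,d\beta_0+\beta_0\wedge du_0>0$; the existence of such a $\beta_0$ is arranged by gluing signed area forms on $S_{\pm}$ across $\Gamma$. Given two convex representatives with the same dividing set, I would normalize each so that $u_i$ matches $u_0$ up to a positive factor in a neighborhood of $\Gamma$, and then linearly interpolate $(\beta_s,u_s)=(1-s)(\beta_0,u_0)+s(\beta_1,u_1)$. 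Since $u_s$ shares its zero set and the sign of its transverse derivative throughout, $\alpha_s=\beta_s+u_s\,dt$ remains contact along the family, and the standard Moser trick, integrating $Y_s$ with $\iota_{Y_s}d\alpha_s+\dot\alpha_s=\lambda_s\alpha_s$, produces an ambient contact isotopy matching the two neighborhoods.

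The main obstacle is keeping $\alpha_s$ contact along the interpolation: the expression $u\,d\beta+\beta\wedge du$ is convex in $\beta$ but only conditionally so in $u$, so the transverse behavior of $u_i$ along $\Gamma$ must be brought into a common normal form before interpolating. Once this preliminary normalization is in place — essentially the content of the $\Gamma$-determined piece — the Moser flow integrates on a neighborhood of $S$ and the resulting contactomorphism shows that $\Gamma$ alone determines the isotopy class of $\xi$ in a neighborhood of $S$, completing the proof.
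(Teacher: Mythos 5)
The paper does not prove this statement; it appears in the Preliminaries as a cited result of Giroux and Honda, so there is no internal proof to compare against. On its own terms, your argument is the standard Giroux proof via the vertically invariant normal form $\alpha=\beta+u\,dt$ on a collar $S\times(-\varepsilon,\varepsilon)$, and the first half is complete and correct: from $\alpha\wedge d\alpha=(u\,d\beta+\beta\wedge du)\wedge dt>0$ one sees that on $\{u=0\}$ the inequality reduces to $\beta\wedge du>0$, forcing $du\neq0$ and exhibiting $\Gamma$ as a transverse zero set, hence a smooth $1$-submanifold, and the Legendrian collar remark handles proper embeddedness at $\partial S$.

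The flexibility half, as written, is circular. You correctly identify that $u\,d\beta+\beta\wedge du$ is linear in $\beta$ but not jointly so in $(u,\beta)$, so the straight-line interpolation $(\beta_s,u_s)=(1-s)(\beta_0,u_0)+s(\beta_1,u_1)$ may leave the contact locus, and you then say the $u_i$ ``must be brought into a common normal form before interpolating... essentially the content of the $\Gamma$-determined piece.'' That is exactly the assertion you are supposed to be proving, so invoking it is a gap. The missing ingredient is a normalization lemma: if $u_0,u_1$ both vanish transversally along $\Gamma$ with matching signs on $S_\pm$, then $u_1=h\,u_0$ for a smooth positive $h$ on $S$ (a Hadamard-type division lemma, using first-order vanishing and sign agreement). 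The conformal rescaling $\alpha_1\mapsto(1/h)\alpha_1$ preserves $\xi_1$ and replaces $(\beta_1,u_1)$ by $(\beta_1/h,u_0)$; a direct computation gives
\[
u_0\,d\bigl(\tfrac{\beta_1}{h}\bigr)+\tfrac{\beta_1}{h}\wedge du_0
=\tfrac{1}{h^2}\bigl(u_1\,d\beta_1+\beta_1\wedge du_1\bigr)>0,
\]
so the rescaled form is still admissible. After this, the two pairs share the same $u$, the interpolation freezes $u$ and moves only $\beta$, linearity keeps the path contact, and the Gray/Moser argument you invoke produces the contactomorphism. Supplying this rescaling step closes the gap and makes your argument complete.
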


\noindent The \emph{standard convex $S^2$} has a single circle as its dividing set. The \emph{standard $3$-ball} is the contact manifold which is tight on $B^3$ and with boundary the standard convex $S^2$. A \emph{bypass} is a convex bigon with Legendrian boundary and whose dividing set consists of a single arc with both boundary points on the same boundary arc.

A contact $3$-manifold $Y$ admits a \emph{decomposing disk} if there is a proper, non-boundary parallel convex disk $D$ with Legendrian boundary whose dividing set consists of a single arc. We say $Y$ is \emph{disk decomposable} if there is a collection of disjoint decomposing disks so that cutting and rounding gives a collection of standard contact $3$-balls. A \emph{product contact manifold} is a contact manifold, diffeomorphic to $F \times I$ for some compact, convex surface $F$ with Legendrian boundary. The notions of a disk decomposable and product contact manifold are equivalent up to smoothing the boundary.

An \emph{$S^1$-invariant contact structure} is a contact structure on a surface bundle over $S^1$ with convex torus (or empty) boundary, whose fibers are all convex surfaces. Equivalently, an $S^1$-invariant contact structure is made by taking a product contact manifold and gluing the top to the bottom by a diffeomorphism preserving the dividing set.

\vspace{0.1in}
\section{Contact structures on spinal open books and Stein structures on allowable Lefschetz fibrations over arbitrary surfaces} \label{Spinal}

\subsection{Spinal open books} \

The notion of a spinal open book was introduced in \cite{LVW} and used to classify fillings of certain contact manifolds. It is (roughly speaking) the right kind of structure to study contact structures arising as the boundaries of Lefschetz fibrations over non-disk bases. We give a set of proofs and constructions based on convex surface theory in this section. In the appendix, Lisi and Wendl give what should be considered the standard characterization of compatibility, existence and uniqueness of contact structures in terms of Reeb fields and Giroux forms. The following definitions are equivalent but have been altered to accommodate the spinal tap construction of Section~\ref{spinal tap}. A \emph{spinal open book decomposition} $\mathcal{B}$ of a $3$-manifold $Y$ is a decomposition of $Y$ into regions $\mathcal{P} \cup_{T} \mathcal{S}$, where 
\begin{itemize}
\item $\mathcal{P}$ is a compact, embedded, codimension-0 submanifold with torus boundary components, equipped with the structure of a fiber bundle $\whf \hookrightarrow \mathcal{P} \xrightarrow{\pi_{\mathcal{P}}} S^1$ for some possibly disconnected surface $\whf$ with boundary.

\item $\mathcal{S}$ is similarly a compact, embedded, codimension-0 submanifold with torus boundary (the same boundary as $\partial \mathcal{P}$), equipped with a the structure of a circle bundle $S^1 \hookrightarrow \mathcal{S} \xrightarrow{\pi_{\mathcal{S}}} \whs$, over a disjoint union of surfaces with non-empty boundaries.

\item The (oriented) boundary components of a fiber $F$ in $\mathcal{P}$ are $S^1$ fibers of $\pi_{\mathcal{S}}$ (equipped with the same orientation). 
\end{itemize}

\noindent We call the fibered region $\mathcal{P}$ the \emph{paper}, the fibers $F$ the \emph{pages}. The product region $\mathcal{S}$ we call the \emph{spine} and for any section of $\mathcal{S}$, we call a connected component, $\Sigma$, a \emph{vertebra}. The tori boundary $T$ between the two we call \emph{interface tori}. 

For the purposes of this paper, all spinal open books will be \emph{symmetric, uniform and simple} (in the terminology of \cite{LVW}). By this we mean every component of $\whf$ is homeomorphic, every component of $\whs$ is homeomorphic, and every component of $\mathcal{S}$ is adjacent to every component of $\mathcal{P}$ along a single interface torus.

An \textit{abstract spinal open book} is a $5$-tuple $(Y, \whf, \widehat{\phi}, \whs, G)$ where:

\enlargethispage{0.5in}
\begin{itemize}
\item  $Y$ is a closed $3$-manifold
\item $\whf$ is a disjoint union of surfaces with non-empty boundaries: \\
 $\whf = F_1 \cup \cdots \cup F_n$
\item $\widehat{\phi}$ is an orientation preserving self-diffeomorphism of $\whf$ fixing the boundary pointwise 
\item $\whs$ is a disjoint union of surfaces with non-empty boundaries: \\
$\whs = \Sigma_1 \cup \cdots \cup \Sigma_m$
\item ${G}$ is a bijection $G:|\partial \whf| \cong |\partial \whs|$
\end{itemize}

\noindent To construct an isomorphism class of embedded spinal open books from this, we form the surface bundle over $S^1$ with fiber $\whf$ and monodromy $\widehat{\phi}$, and the trivial bundle $S^1 \x \whs$. We glue the resulting boundaries together using $G$ to identify components and so that the oriented boundary of a fiber $\whf$ is a collection of $S^1$ fibers in $S^1 \times \whs$. Note that the monodromy condition on an abstract open book means we are automatically constructing a \emph{framed} spinal open book decomposition: the boundary of $\whs$ is the orbit of a point (one per boundary component of $\whf$) under the self-diffeomorphism $\widehat{\phi}$. 

We say a spinal open book decomposition $\mathcal{B}$ \emph{carries} (or \emph{admits} or is \emph{compatible with}) a contact structure $\xi$ if there exists a contact form $\alpha$ for $\xi$ whose Reeb vector field is positively transverse to both the pages of $\mathcal{P}$ and the sections of the spine $\mathcal{S}$. In that case, we also say that $\xi$ is \emph{supported by} $\mathcal{B}$. (Note, for every spinal open book, there we can isotope $\xi$ so there is a contact form for $\xi$ whose Reeb field is positively tangent to all $S^1$ fibers, cf. \cite{LVW}.)  The triple $(Y, \mathcal{B}, \xi)$ will then denote a spinal open book $\mathcal{B}$ and a contact structure $\xi$ on the closed $3$-manifold $Y$ compatible with each other. 

\noindent The reader should compare this to the Thurston--Winkelnkemper construction for regular open book decompositions \cite{TW}.

\vspace{0.1in}
\subsection{Framed spinal open books} \

For our purposes an equivalent definition of compatibility between spinal open books and contact structures will be useful, for which we first introduce the following: A \emph{framed spinal open book decomposition} is a spinal open book decomposition along with a chosen section of the spine. Specifically, it is a spinal open book decomposition along with an identification of $\mathcal{S}$ with $\whs \times S^1$.

\begin{figure}
\labellist
\pinlabel $F$ at 237 387
\pinlabel $\Sigma$ at 321 401
\endlabellist

\includegraphics[width = 2.4in]{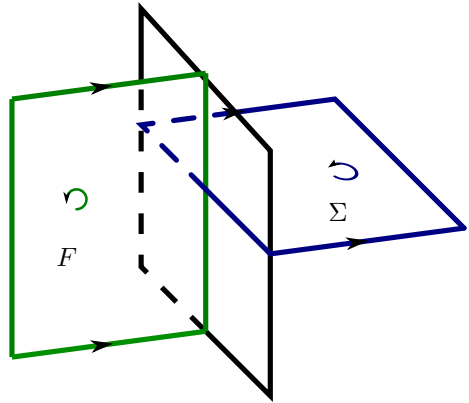}
		\caption{Orientations of the fiber and vertebrae at an interface torus ${T}$.}
	\label{fig:interfacetorus}
\end{figure}


A \emph{framed spinal open book decomposition} carries (or is compatible with) a contact structure $\xi$ if the following conditions are satisfied:

\begin{itemize}
\item the interface tori are convex with dividing set two parallel curves of negative slope (i.e., in the $(\partial \Sigma, \partial F)$-basis, the dividing set is of the form $\pm(-p,q)$ for $p, q>0$).
\item on each component of the paper $\mathcal{P}$, we can isotope a page $F$ convex with Legendrian boundary on $\mathcal{T}$ and with a dividing set consisting of boundary parallel arcs so that then negative regions $F_-$ are boundary parallel bigons, and so that after cutting and rounding on $F$, $\mathcal{P}$ is a product contact manifold. (Equivalently, we ask that the complement of $F$ in $\mathcal{P}$ be \emph{disk decomposable}.) 
\item on each component of the spine $\mathcal{S}$, we can make a vertebra $\Sigma$ convex with Legendrian boundary on $\mathcal{T}$ and with dividing set parallel to each component of the boundary of $\Sigma$, so that after cutting and rounding on $\Sigma$, $\mathcal{S}$ is a product contact manifold.
\end{itemize}

\noindent Intuitively, one should think of the contact structure associated to a framed spinal open book as being a deformation of the tangent planes to the fibers and vertebrae, and rotating a quarter turn between them in a small neighborhood of the interface torus, just as the contact structure we associate to a standard open book is a deformation of the tangent planes to the fibers and to a small disk neighborhood of the binding, with a quarter-turn rotation in between.

\begin{remark} The requirement that the slope of the interface torus be negative is necessary for a good definition of compatibility. Any contact structure given above with a positively sloped dividing set on $T$ is overtwisted with an overtwisted disk located in a small neighborhood of the interface torus. (See Figure  \ref{fig:bypass}.) In addition, notice that Lemma \ref{lm:framing change} (plus Proposition \ref{prop:uniqueframed}) implies that \emph{any} negative slope can be realized on the interface torus. \end{remark}

\begin{proposition} \label{prop:uniqueframed}
Every framed spinal open book decomposition admits a unique isotopy class of a compatible contact structure.
\end{proposition}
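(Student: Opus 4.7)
The plan is to build the contact structure on each piece of the spinal open book using convex surface theory, glue along the interface tori, and then invoke Giroux flexibility to pin down uniqueness.

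For existence, I would first work on the spine. On each component of the spine $\mathcal{S} \cong \widehat{\Sigma} \times S^1$, I would apply the standard $S^1$-invariant construction: specify the vertebra $\Sigma$ as a convex surface with dividing set consisting of some number of closed curves parallel to each boundary component of $\Sigma$, and spin this into the corresponding $S^1$-invariant contact structure on $\Sigma \times S^1$. On each component of the paper $\mathcal{P}$, which is a surface bundle over $S^1$ with monodromy $\widehat{\phi}$ fixing $\partial\widehat{F}$ pointwise, I would realize a convex page with boundary parallel dividing arcs (so that the negative region $F_-$ consists of boundary parallel bigons), isotope $\widehat{\phi}$ to be the identity on a collar neighborhood of $\partial\widehat{F}$ so that it preserves this dividing set, and construct a contact structure via the mapping torus construction. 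By choosing the multiplicities of the boundary-parallel dividing curves on each vertebra and of boundary-parallel dividing arcs on each page so that both sides induce the same pair of parallel $(-p,q)$-sloped dividing curves on each interface torus, the two local contact structures glue to a globally defined contact $\xi$ on $Y$ that by construction satisfies all of the framed compatibility conditions.

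For uniqueness, suppose $\xi_0$ and $\xi_1$ are both compatible with the framed spinal open book $(Y, \mathcal{B})$. By hypothesis, each interface torus is convex with two parallel dividing curves of \emph{some} negative slope, but a priori the slopes for $\xi_0$ and $\xi_1$ could differ. I would invoke Lemma~\ref{lm:framing change} to isotope each structure through compatible contact structures so that the slopes on every interface torus agree. Once matched, the number of boundary-parallel dividing arcs on each page and of closed boundary-parallel dividing curves on each vertebra is forced by the intersection with the interface torus dividing set, so the dividing sets on all pages and vertebrae coincide up to isotopy. Giroux flexibility (Theorem~\ref{thm:giroux flexibility}) then produces an isotopy making $\xi_0$ and $\xi_1$ agree on neighborhoods of all pages, vertebrae, and interface tori. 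By the compatibility condition, the complements in $\mathcal{P}$ and $\mathcal{S}$ are product contact manifolds, which are determined up to isotopy by their convex boundary, so these local isotopies extend to a global isotopy between $\xi_0$ and $\xi_1$.

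The main obstacle will be the cross-interface bookkeeping: the boundary-parallel dividing \emph{arcs} coming from the paper side and the boundary-parallel closed dividing \emph{curves} coming from the spine side must assemble into the \emph{same} pair of $(-p,q)$-curves on each interface torus. This is a careful count in the $(\partial\Sigma,\partial F)$-basis and is where Lemma~\ref{lm:framing change} does essential work, both in showing that any negative slope can be realized on the torus (for existence) and in aligning the two given structures (for uniqueness). Subsidiary obstacles are verifying that $\widehat{\phi}$ can be isotoped rel $\partial\widehat{F}$ to preserve the chosen dividing data near the boundary of the page, and confirming that the $S^1$-invariant structure on a circle bundle is uniquely determined (up to isotopy) by the dividing set on a single vertebra.
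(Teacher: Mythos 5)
Your overall architecture — construct on each piece, glue along interface tori, then use Giroux flexibility plus disk-decomposability of the complements to pin down uniqueness — is the same as the paper's, and the existence half is essentially correct. The genuine gap is in the slope-alignment step: you invoke Lemma~\ref{lm:framing change} to isotope $\xi_0$ and $\xi_1$ so their interface-torus slopes agree, but that lemma is not the right tool, for two reasons. First, Lemma~\ref{lm:framing change} concerns two different choices of \emph{section} of the spine (two framings of the same underlying spinal open book), not two different slopes on the interface tori for a \emph{fixed} framing; it does not by itself say that a compatible structure with slope $(-p,q)$ can be carried to one with slope $-1$. Second, using it here is circular: the proof of Lemma~\ref{lm:framing change} only produces a contact structure compatible with the new framed spinal open book and then needs the uniqueness asserted in Proposition~\ref{prop:uniqueframed} to conclude the two are isotopic — this is precisely why the remark in the paper says ``Lemma~\ref{lm:framing change} \emph{plus} Proposition~\ref{prop:uniqueframed}'' implies that any negative slope can be realized.

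What actually does the work is the bypass/sliding argument encapsulated in Lemma~\ref{lm:slide}. Each boundary-parallel dividing curve on a page $F$ determines a bypass for $\mathcal{S}$, and each boundary-parallel dividing curve on a vertebra $\Sigma$ determines a bypass for $\mathcal{P}$; attaching $p-1$ bypasses from $F$ and $q-1$ from $\Sigma$ drives the slope on $T$ from $(-p,q)$ down to $-1$, and Lemma~\ref{lm:slide} shows that each such slide preserves compatibility with the framed spinal open book. This is what lets you normalize both $\xi_0$ and $\xi_1$ to the same interface-torus slope without appealing to a statement that is logically downstream. Replace the Lemma~\ref{lm:framing change} citation with this bypass argument (Lemma~\ref{lm:slide}) and the rest of your uniqueness step — matching dividing sets on pages and vertebrae by intersection count, applying Theorem~\ref{thm:giroux flexibility} near $T \cup F \cup \Sigma$, and extending over the disk-decomposable complement — goes through as you wrote it.
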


\begin{proof} From the description of compatibility via convex surfaces, given the dividing set on the interface tori $T$, such a contact structure both exists and is unique up to isotopy. To see this, we can make the given interface tori convex. After thickening, we can make the fiber and the vertebra simultaneously convex with Legendrian boundary and with the specified dividing set. A neighborhood of this union has a unique contact structure and complement is disk decomposable. 

To show that the definition is well-defined, though, we need to see that it is independent of the slope of the dividing set on the interface torus. This is guaranteed by the orientations chosen and described in Figure \ref{fig:interfacetorus}. In particular, we could have chosen the slope to be $-1$. Here are the details: 

By switching to the a framed spinal open book, we can make both $\mathcal{S}$ and $\mathcal{P}$ are contact bundles with convex fibers. Let $\whs$ be the fiber of $\mathcal{S}$ and let $\Sigma$ be a connected component of $\whs$. Similarly, $\whf$ is the fiber $\mathcal{P}$ and $F$ is a connected component of $\whf$. Since the interface torus has dividing sets of slope $(-p,q)$, the fiber $\Sigma$ has $q$ components in the dividing set parallel to $T$, and $F$ has $p$ components.  We want to show that we can decrease each of $q$ and $p$ to 1 while still keeping $\mathcal{S}$ and $\mathcal{P}$ contact bundles whose convex fibers have boundary parallel dividing sets. To do this, observe that each dividing curve on $F$ determines a bypass for $\mathcal{S}$ and each dividing curve for $\Sigma$ determines a bypass for $\mathcal{P}$. Forgetting the contact structure on the complement of $T$, if we attach $p-1$ bypasses from $F$ and $q-1$ bypasses from $\Sigma$, the resulting torus has slope $-1$. It suffices to show, then, that after sliding along one of these bypasses, the resulting spinal open book remains compatible. Thus the next lemma completes the proof.
\end{proof}

\begin{lemma} \label{lm:slide} If we slide $T$ over a bypass from $F$, the resulting spinal open book remains compatible with the contact structure, and similarly for a bypass from $\Sigma$. \end{lemma}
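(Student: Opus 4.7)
The plan is to analyze the bypass slide as a local contact-geometric move in a collar neighborhood of the interface torus $T$ and then verify directly that the three defining conditions of a compatible framed spinal open book continue to hold after the slide.

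First I would set up a collar neighborhood $N \cong T \times [-\epsilon, \epsilon]$ with $T \times [0, \epsilon] \subset \mathcal{P}$ and $T \times [-\epsilon, 0] \subset \mathcal{S}$, modeling a bypass from $F$ as a convex half-disk $B \subset \mathcal{P}$ attached to $T$ along a Legendrian arc on $\partial F$, obtained by thickening one of the boundary-parallel bigons in $F_-$. Sliding $T$ across $B$ produces a parallel torus $T' \subset T \times [0, \epsilon]$, isotopic to $T$ in $Y$, whose dividing set is obtained from that of $T$ by the Farey-neighbor move prescribed by Honda's bypass attachment formula. Because the bypass attaching arc has the slope of $\partial F$ and the original slope on $T$ is negative, the new slope moves strictly closer to $-1$ while remaining negative (and if no such slide is needed the slope is already $-1$).

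Next I would construct the new pages and vertebrae adapted to $T'$. The new page $F'$ is isotopic to $F$ with the bigon adjacent to $B$ removed and its boundary pushed onto $T'$; the surviving dividing arcs of $F$ restrict to $F'$ as boundary-parallel arcs, so the negative region $F'_-$ still consists of boundary-parallel bigons, just one fewer than $F_-$. The new vertebra $\Sigma'$ is obtained by extending $\Sigma$ across the slide region to $T'$; by the bypass formula its dividing set again consists of curves parallel to $\partial \Sigma'$, matching the new dividing set of $T'$.

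Finally I would verify the product conditions after cutting. Since the slide is supported in a collar of $T$ and $B$ is a convex disk sitting inside a contact sub-product of $\mathcal{P}$ cut along $F$, excising this sub-product and re-cutting along $F'$ preserves the product structure. Dually, the excised region is absorbed into $\mathcal{S}'$, and by Honda's classification of tight contact structures on $T^2 \times I$ together with uniqueness of contact thickenings of convex surfaces, it glues with the product structure on $\mathcal{S}$ cut along $\Sigma$ to yield a product structure on $\mathcal{S}'$ cut along $\Sigma'$. The case of a bypass from $\Sigma$ is symmetric, interchanging the roles of $\mathcal{P}$ and $\mathcal{S}$. The main obstacle is precisely this last step — ensuring that the absorbed bypass region is consistent with the product structure of $\mathcal{S}'$ cut along $\Sigma'$ — which genuinely uses contact data rather than mere topology, and reduces to Honda's gluing and classification results for tight contact structures on thickened tori.
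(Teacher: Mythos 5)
Your overall architecture matches the paper's: you shrink $\mathcal{P}$ to $\mathcal{P}'$ by excising the bypass layer, check that the new page $F'$ still has boundary-parallel bigons for its negative region and that the complement of $F'$ in $\mathcal{P}'$ is still disk decomposable, then grow $\mathcal{S}$ to $\mathcal{S}'$ by attaching the bypass and check the corresponding conditions there. The $\mathcal{P}$-side of your argument is essentially the paper's.

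The gap is in your vertebra step, where you write that the dividing set of $\Sigma'$ ``again consists of curves parallel to $\partial\Sigma'$, matching the new dividing set of $T'$ \ldots by the bypass formula.'' Honda's bypass attachment formula computes the new slope on the torus $T'$; it does not, by itself, tell you that the dividing set of the extended vertebra $\Sigma'$ is boundary-parallel. That is a statement about arcs on a surface transverse to $T$, not about slopes on $T$, and it is precisely the place where the negative-slope hypothesis is actually used. The paper's proof isolates the mechanism: when the bypass arc coming from $F$ is slid down $T$ so it lies parallel to $\Sigma$, it straddles three adjacent dividing arcs, and because the slope of $\Gamma_T$ is negative the bypass \emph{merges two adjacent bigons of $\Sigma_-$ into one}, rather than capping a single bigon off. (Capping a bigon would create a closed, boundary-parallel dividing curve on $\Sigma'$, hence an overtwisted disk nearby, and is exactly what happens if the slope is positive — see the paper's remark and Figure~\ref{fig:bypass}.) Your proposal needs this concrete count-of-bigons check rather than an appeal to the slope formula. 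Once you have it, your invocation of Honda's classification of tight contact structures on $T^2 \times I$ for the final gluing is heavier than needed: the paper simply observes that cutting $\mathcal{S}'$ along the extended $\Sigma'$ (whose dividing set you now know is boundary-parallel) again yields a disk-decomposable piece, and compatibility follows directly from the definition.
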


\begin{proof} We know the contact structure on $\mathcal{P}$ is tight and the complement of $F$ is disk decomposable. Let $\mathcal{P}'$ be the result of cutting out the bypass layer from $\mathcal{P}$ and $F'$ be the subsurface of $\mathcal{P}'$ consisting of $F$ with the bypass removed. Then the contact structure on $\mathcal{P'}$ is tight and the complement of $F'$ is disk decomposable.  Thus $\xi$ is compatible along $\mathcal{P'}$.

\begin{figure} 
\labellist
\pinlabel $\Sigma$ at 200 574 
\endlabellist
\includegraphics[width = 2in]{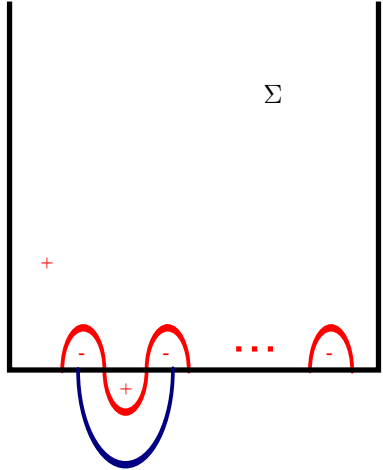}
\caption{ \label{fig:bypass} Attaching a bypass from the pages to the spine and vice versa.}
\end{figure}

Now suppose we attach this bypass to $\mathcal{S}$. Coming from $F$, this bypass is being attached along a vertical Legendrian arc straddling three adjacent arcs of the dividing set. The bypass arc can be slid down the $T$ so it is parallel with the given section $\Sigma$ of the spine. Because the dividing set on $T$ was chosen with the appropriate slope, attaching this bypass merges two adjacent disks in $\Sigma_-$ (as opposed to capping a single disk). The dividing sets of $\Sigma$ and this added bypass are shown in Figure \ref{fig:bypass} As before, after cutting along $\Sigma$, now extended by the bypass, the result is disk decomposable. 
\end{proof}

One of the nice corollaries of the Giroux correspondence is that if one knows a contact structure is tight, then it is determined by a single page of a compatible open book. From the proof of invariance above, the contact structure on a framed spinal open book is determined up to isotopy by a single (possibly disconnected) page and a single (possibly disconnected) section of the spine, and the interface tori, as topological submanifolds. 

The next lemma shows how the framings on a given spinal open book relate to each other, indicating how we get an equivalent definition of a compatible contact structure on an unframed \textit{spinal open book}, up to isotopy. 

\begin{lemma}\label{lm:framing change} Let $\mathcal{B}$ and $\mathcal{B}'$ be two different \emph{framed} spinal open book decompositions which represent the same spinal open book. In particular, $\mathcal{B}$ and $\mathcal{B}'$ correspond to two different choices of sections of $\mathcal{S}$. Then the two contact structures $\xi$ and $\xi'$ carried by $\mathcal{B}$ and $\mathcal{B}'$  are isotopic. \end{lemma}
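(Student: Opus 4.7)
The plan is to produce a single contact structure $\xi$ that is compatible, in the framed sense, with both $\mathcal{B}$ and $\mathcal{B}'$; the uniqueness part of Proposition~\ref{prop:uniqueframed} then immediately forces $\xi \simeq \xi'$.

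Let $s, s' : \whs \hookrightarrow \mathcal{S}$ be the sections of $\pi_{\mathcal{S}}$ corresponding to the framings $\mathcal{B}$ and $\mathcal{B}'$. On each interface torus $T$, both sections give curves in the boundary class $[\partial \Sigma]$, but they differ by some integer $k = k_T$ of windings around the $S^1$-fiber direction $[\partial F]$. Thus, in the $(s, \partial F)$ basis of $H_1(T)$, one has $[s'] = [s] + k\,[\partial F]$, so a dividing set of class $-p\,[s] + q\,[\partial F]$ has class $-p\,[s'] + (q+kp)\,[\partial F]$ in the basis $(s', \partial F)$.

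I would first fix positive integers $p, q$ with $q + k_T \, p > 0$ for every interface torus $T$ (satisfied by taking $q$ large enough that it dominates all of the finitely many $|k_T| \, p$). By Proposition~\ref{prop:uniqueframed}, there is a contact structure $\xi$ compatible with $\mathcal{B}$ whose dividing set on each interface torus has slope $(-p, q)$ in the $(s, \partial F)$ basis. In the $(s', \partial F)$ basis the same dividing curves have slope $(-p, q + k_T p)$, still of negative slope by the choice of $q$, so the interface tori satisfy the required slope condition for $\mathcal{B}'$. The paper $\mathcal{P}$ and its compatibility conditions are unchanged by the framing, so they hold for $\xi$ automatically.

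The remaining step is to check that, for the new section $s'$, the vertebra $s'(\whs)$ can be isotoped to a convex surface with Legendrian boundary on the interface tori whose dividing set is parallel to each boundary component and such that cutting and rounding along it leaves $\mathcal{S}$ a product contact manifold. Since $\xi|_{\mathcal{S}}$ is $S^1$-invariant, a convex cross-section with Legendrian boundary on the dividing set of the interface tori inherits a dividing set controlled by the twisting: a local computation in Honda's normal form on a thickened torus shows that $s'(\whs)$ carries $q + k_T p$ boundary-parallel arcs along each boundary component, and cutting along $s'(\whs)$ leaves $\mathcal{S}$ a product. This last step, establishing the precise boundary-parallel form of the dividing set on a twisted cross-section of an $S^1$-invariant contact structure, is the main technical hurdle; once in hand, $\xi$ is compatible with $\mathcal{B}'$ as well, and Proposition~\ref{prop:uniqueframed} yields $\xi \simeq \xi'$.
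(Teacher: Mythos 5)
Your overall strategy coincides with the paper's: produce one contact structure that is compatible with both framings and then invoke Proposition~\ref{prop:uniqueframed} for uniqueness. Your change-of-basis bookkeeping on the interface torus, $-p[s]+q[\partial F] = -p[s']+(q+kp)[\partial F]$, is correct, and choosing $q$ large enough to keep every interface slope negative in the new basis is a reasonable way to avoid the paper's reduction to a single elementary ``spin'' along one dual arc (the paper instead assumes slope $\pm(-1,2)$ and changes the degree by one at a time).

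However, the proof has a genuine gap at exactly the point you flag. Showing that the twisted section $s'(\whs)$ can be isotoped to a convex surface whose dividing set consists solely of boundary-parallel arcs --- and that cutting and rounding along it leaves $\mathcal{S}$ disk decomposable --- is not a one-line ``local computation in Honda's normal form near a thickened torus.'' A purely local count near the interface tori does not rule out interior dividing curves on the spun-up section, and that is precisely what one must rule out. In the paper this is the substantive content of the lemma: Legendrian-realize the dual arc $a$ disjoint from the dividing set, spin $\Sigma$ in the $S^1$-direction in a small neighborhood of $a$ while keeping it convex, and then close it up by removing a small triangle on one boundary component and adding one on the other (Figure~\ref{fig:section}), which exchanges bigon components of $\Sigma_-$ between the two boundaries and produces a convex representative with only boundary-parallel dividing curves. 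Tightness of $\xi|_{\mathcal S}$ then forces the complement to remain disk decomposable. You would need to carry out this construction (or an honest Honda-style normal-form argument that establishes the same global conclusion) for the proof to be complete; merely asserting the resulting dividing-set count $q+k_Tp$ does not discharge the obligation.
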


\begin{figure} 
\labellist
\pinlabel $\pi(\Sigma)$ at 277 545
\endlabellist
\includegraphics[width = 2.5in]{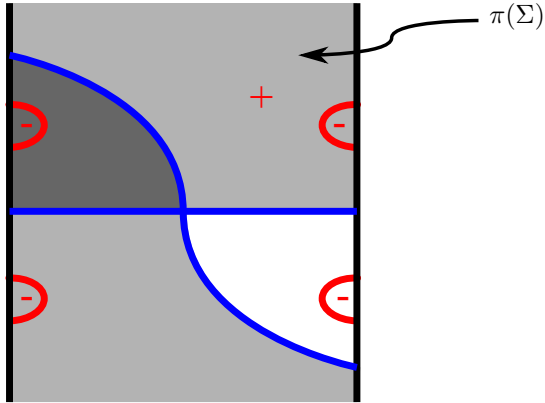}
\caption{\label{fig:section} The projection of the graph of a convex section of the spine after being spun along an arc.}
\end{figure}

\begin{proof} Changing a section of $\mathcal{S}$ is equivalent to choosing a map from $\whs$ to $S^1$. Such a map is determined by the degree on a basis of $\pi_1(\whs)$. Changing the degree by 1 on a single generator is equivalent to taking a properly embedded dual arc $a$ in a component $\whs$ and ``spinning" the section $\Sigma$ along that dual arc. Equivalently, if we take the product annulus sitting over this arc, we can form the oriented resolution of the section with this annulus. In any case, it is enough to show that by adding a single annulus to $\Sigma$ in $\mathcal{B}$, we get a new framed spinal open book compatible with the same (isotopy class of) contact structure. Since we know that the contact structure on $\mathcal{S}$ is tight, it's enough to find a convex representative of the new section with boundary parallel dividing curves.

For ease, assume that we have arranged the contact structure so that the interface tori at the boundaries of $\mathcal{S}$ touching $\partial a$ have dividing sets of slope $\pm(-1,2)$ in the $(\partial \Sigma, \partial F)$ basis. Choose a representative of $a$ on $\Sigma$ which is disjoint from the dividing set. If we slide $a$ in the vertical direction, keeping it disjoint from all the fibers, it will return to $\Sigma$ having moved to the right by jumping over one disk component of $\Sigma_-$ on each boundary. We can spin $\Sigma$ in the vertical direction in a small neighborhood of $a$, keeping it convex. In order to glue to get a closed surface, though, we need to remove a small triangle of $\Sigma$ on one boundary, and wrap by an additional triangle on the other as shown in Figure \ref{fig:section}. (This has the effect of removing one bigon component of $\Sigma_-$ on one boundary and adding bigon component on the other boundary.) This gives a new section with boundary parallel dividing curves, as required.
\end{proof}

Combining Lemma \ref{lm:framing change} and Proposition \ref{prop:uniqueframed} gives a new proof of the following result from \cite{LVW}:

\begin{proposition}\label{prop:welldefined} Every spinal open book decomposition is compatible with a unique isotopy class of contact structure. 
\end{proposition}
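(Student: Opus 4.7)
The plan is to deduce Proposition \ref{prop:welldefined} as a direct consequence of Proposition \ref{prop:uniqueframed} and Lemma \ref{lm:framing change}. Given a spinal open book $\mathcal{B} = \mathcal{P} \cup_T \mathcal{S}$, I would first observe that the spine $\mathcal{S}$ is a circle bundle over $\widehat{\Sigma}$, a disjoint union of compact surfaces with non-empty boundary; since each such surface is homotopy equivalent to a graph, the bundle is trivial and sections exist. Any such section determines a framing of $\mathcal{B}$, so the set of framed refinements of $\mathcal{B}$ is non-empty.

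For existence, pick any section to obtain a framed spinal open book $\mathcal{B}_f$ refining $\mathcal{B}$. Proposition \ref{prop:uniqueframed} produces a contact structure $\xi$ compatible with $\mathcal{B}_f$ in the convex-surface sense. I would then verify that $\xi$ also satisfies the Reeb-field definition of compatibility with the unframed $\mathcal{B}$: on a neighborhood of the spine, the convex-surface data make $\xi$ $S^1$-invariant, so there is a contact form whose Reeb vector field is tangent to the circle fibers of $\mathcal{S}$ (and hence positively transverse to every smooth section); on the paper, an analogue of the Thurston--Winkelnkemper construction yields a Reeb field positively transverse to the pages. These are interpolated across a collar of the interface tori by a standard partition-of-unity argument, exactly in the spirit of what is carried out in the appendix.

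For uniqueness, suppose $\xi_1$ and $\xi_2$ are both compatible with $\mathcal{B}$. After contact isotopies, I would arrange each $\xi_i$ to have Reeb vector field tangent to the $S^1$-fibers of $\mathcal{S}$; the orbits together with a transverse section then exhibit $\mathcal{S}$ as a framed spinal open book $\mathcal{B}_{f_i}$ compatible with $\xi_i$ in the convex-surface sense. Any two sections of the trivial bundle $\mathcal{S} \to \widehat{\Sigma}$ differ by a finite sequence of spinning moves along embedded dual arcs, as described in the proof of Lemma \ref{lm:framing change}. Iterating that lemma gives a chain of isotopies relating the contact structures associated to $\mathcal{B}_{f_1}$ and $\mathcal{B}_{f_2}$; Proposition \ref{prop:uniqueframed} identifies each $\xi_i$, up to isotopy, with the contact structure attached to its respective framing, and composing these isotopies produces the desired isotopy from $\xi_1$ to $\xi_2$.

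The main obstacle I anticipate is the passage between the two formulations of compatibility—Reeb-field transversality on the unframed $\mathcal{B}$ versus the convex-surface conditions on a framed refinement. Concretely, one must show that any contact structure compatible with $\mathcal{B}$ in the Reeb sense can, after a contact isotopy, be realized as compatible with some framing in the convex sense of Section \ref{Spinal}. This requires simultaneously arranging the Reeb field to be tangent to the $S^1$-fibers of $\mathcal{S}$ (making the interface tori pre-Lagrangian, hence $C^0$-perturbable to convex tori with dividing curves of any prescribed negative slope), and exhibiting the pages and vertebrae as convex surfaces with boundary-parallel dividing sets. Once this technical equivalence is in hand, the two preceding results combine cleanly to give well-definedness.
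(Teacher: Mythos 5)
Your proposal takes essentially the same route as the paper, whose entire proof is the one-line observation that Proposition~\ref{prop:uniqueframed} together with Lemma~\ref{lm:framing change} yields the result: pick a section to frame the spinal open book, apply Proposition~\ref{prop:uniqueframed}, and use Lemma~\ref{lm:framing change} to show independence of the chosen framing. The obstacle you flag at the end---reconciling the Reeb-field definition of compatibility on an unframed spinal open book with the convex-surface definition used for framed ones---is a real subtlety, but the paper's proof does not address it either; the authors simply assert the equivalence in prose before Proposition~\ref{prop:uniqueframed} and treat the convex-surface formulation as the operative one throughout Section~\ref{Spinal}, so your caution is more careful than, not contrary to, the paper.
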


%
%

\vspace{0.1in}
\subsection{Spinal tap on a spinal open book} \label{spinal tap}

We will now define an operation on embedded spinal open books, which comes with a natural Stein cobordism, as we will discuss shortly. This operation (in both directions) has been studied already by Baldwin \cite{Baldwin}. Avdek \cite{Avdek} gives the inverse operation. For ease, we restrict to symmetric, uniform, simple open books, though the operation works in much more generality. 

As a motivation, we outline the plan to prove Theorem~\ref{thm2}. Suppose we start with a Lefschetz fibration $(X,f)$ with a connected, bounded, non-disk base. To construct a Stein structure on $(X,f)$ we could cut $f$ along an arc decomposition of $\Sigma$ to get a Lefschetz fibration over the disk -- this is known to admit a Stein structure. We then extend the Stein structure back across the 1-handles of $\Sigma$, showing that at every stage we get a spinal open book at the boundary of the Lefschetz fibration, which is compatible with the contact structure at the boundary of the Stein structure. 

We start with a topological cut-and-paste operation on spinal open books.

	For an embedded spinal open book $\mathcal{B} = \mathcal{P} \cup \mathcal{S}$, let $S$ be a surface consisting of two components $F_1$ and $F_2$ of a fiber in $\mathcal{P}$ along with some annuli of made up of the $S^1$ fibers in $\mathcal{S}$ connecting them, one annulus in each component of $\mathcal{S}$. Since the orientations of $F_1$ and $F_2$ don't agree along the annuli we orient $S$ as $F_1 \cup - F_2$. Call such a surface a \emph{spinal tap surface}. A \emph{spinal tap} along $S$ is the following operation:

\begin{itemize} 
\item Cut $\mathcal{B}$ along $S$. The resulting manifold has two boundary components $S_+ = F_{1+} \cup - F_{2+}$ and $S_- = F_{1+} \cup - F_{2+}$. On 
\item Fold $S_+$ by gluing $F_{1+}$ to $F_{2+}$ by a diffeomorphism $h: F_1 \to F_2$.
\item Fold $S_-$ by gluing $F_{2-}$ to $F_{1-}$ by the inverse diffeomorphism $h^{-1}: F_2 \to F_1$.
\end{itemize}
  
The resulting open book $\mathcal{B}' = \mathcal{P}' \cup \mathcal{S}' $ has 
\begin{itemize}
\item the new spine $\mathcal{S}'$ is $\mathcal{S}$, cut along the connecting annuli of $S$
\item $\mathcal{P}'$ is the bundle made by cutting $\mathcal{P}$ out $F_1 \x [0,1]$ and $F_2 \x [0,1]$ and identifying $F_1 \x \{0\} $ and $F_2 \x \{1\}$ by $h$ and $F_2 \x \{0\} $ and $F_1 \x \{1\}$ by $h^{-1}$.
\end{itemize}

For our purposes it is helpful to see how to construct a spinal tap abstractly.

Let $\mathcal{B} = (Y, \whf, \widehat{\phi}, \whs, G)$ be an abstract spinal open book. We can form a new spinal open book $\mathcal{B}' = (Y', \whf', \widehat{\phi}', \whs', G')$ as follows: 
\begin{itemize}
\item pick a set of identifications $\hat{i}: \whs \to \base$ of each component of the spine with an abstract surface $\base$
\item choose a properly embedded arc $a$ in $\base$
\item we can isotope $\hat{i}$ and $a$ so that that for each component of $\hat{i}^{-1}(a)$, the fiber circles in the interface torus $T = \partial \mathcal{S}$ above $\partial a$ are the boundary circles of precisely two connected fiber components, $F_1$ and $F_2$ in $\mathcal{P}$
\item in particular, $\partial a$ sits on two components of $\partial \base$, each of which determines a component of $\mathcal{P}$. Let $F_1$ be a fiber in one such component of $\mathcal{P}$ and $F_2$ in the other. Then $F_1$ and $-F_2$ can be glued together along the annuli $\pi_{\mathcal{S}}^{-1}(\hat{i}^{-1} (a))$ to get a closed surface $S$ in $Y$. Call such a surface a \emph{spinal tap surface} of $\mathcal{B}$.\\
Notice that due to the orientation conventions, we need to reverse the orientation on one of these surfaces from the orientation coming from the bundle.

\item topologically, we can cut $Y$ along $S$ and glue in two copies of $F_1\x [0,1]$ to get a new spinal open book $\mathcal{B}_0$. In particular, cutting $Y$ along $S$ leaves two boundary components $S_+$ and $S_-$. Equipped with the orientation from $F_1$ and $S$, $S_- = F_1 \cup - F_2$ and $S_+ = - F_1 \cup F_2$. (With this choice of orientation, $S_-$ is oriented as the boundary of $Y - S$, while $S_+$ has the opposite orientation.) Choose an orientation preserving identification $h:F_1 \xrightarrow{\cong} F_2$ which preserves the identification of the boundaries in $S$ and glue in two copies of $F_1\x [0,1]$ so that 
$$\begin{array}{c} F_1 \x \{0\} \xrightarrow{id} F_1 \\ F_1 \x \{1\} \xrightarrow{h} F_2 \\
\end{array}$$
on $S_+$
and 
$$\begin{array}{c} F_1 \x \{0\} \xrightarrow{h} F_2 \\ F_1 \x \{1 \} \xrightarrow{id} F_1 \\
\end{array}$$
on $S_-$
\end{itemize}

The resulting spinal open book $\mathcal{B}' $ is made by removing an arc from each component of $\whs$. If this arc connects two different boundary components of $\base$, then we compose (or concatenate) the two fibered regions along the fibers $F_1$ and $F_2$. If the arc has both boundary points on the same boundary of $\base$, then we cut one fibered region along two different fibers and close them up so to form two different fiber bundles.
	
\begin{proposition}\label{prop:unfolding} Suppose $(Y, \mathcal{B}, \xi)$ is a contact spinal open book and suppose that $(Y', \mathcal{B}', \xi')$ is obtained from $\mathcal{B}$ by a spinal tap. Then there is a Stein cobordism from $(Y', \xi')$ to $(Y, \xi)$. Moreover, if $\mathcal{B}'$ is the boundary of a Lefschetz fibration $(X', f')$, then this Lefschetz fibration can be extended, along this Stein cobordism, to a Lefschetz fibration $(X, f)$ with boundary $\mathcal{B}$. 
\end{proposition}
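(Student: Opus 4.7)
The plan is to identify the unfolding cobordism as a fibered $1$-handle $F \times D^2$, endow it with a Stein structure via the Gompf--Thurston-type construction of the appendix (or equivalently as a product of Stein structures), and then invoke the uniqueness of compatible contact structures on framed spinal open books (Propositions~\ref{prop:uniqueframed} and~\ref{prop:welldefined}) to confirm that the induced boundary contact structures match $\xi'$ and $\xi$.

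First I would identify $W$ topologically: the spinal tap corresponds to removing a 1-handle $h^1 \cong D^2$ from the base $\Sigma$ of any Lefschetz fibration realizing $\mathcal{B}$, so reversing it produces the fibered extension $W = F \times h^1 \cong F \times D^2$. This is glued to $Y'$ along two copies of $F \times D^1$ that sit as neighborhoods of the two pages $F_1, F_2$ created by the tap; the complementary portion of $\partial W$ provides two restored pages $F \times D^1$ and the solid-torus neighborhood $\partial F \times D^2$ of the reinstated spine arc, which together topologically realize the modified piece of $\mathcal{B}$ in $Y$. Since $W \to h^1$ is a trivial (critical-point-free) allowable Lefschetz fibration over the disk, the appendix's construction (equivalently, a product of Stein structures on the bounded Riemann surface $F$ and on $D^2$) equips $W$ with a Stein structure; by adjusting the plurisubharmonic function one can arrange the Liouville field to point into $W$ along the attaching region and out of $W$ along the remaining boundary. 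Appending a trivial Liouville collar on $Y' \times [0,1]$ then yields a Stein cobordism from $(Y', \xi')$ to $(Y, \xi)$.

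The matching of boundary contact structures is where uniqueness enters: the Stein structure on $W$ induces (framed) spinal open book structures on both boundary pieces, which topologically coincide with the corresponding subsets of $\mathcal{B}'$ at the concave end and $\mathcal{B}$ at the convex end, so Propositions~\ref{prop:uniqueframed} and~\ref{prop:welldefined} force the induced contact structures to be isotopic to $\xi'$ and $\xi$. For the Moreover statement, given any Lefschetz fibration $(X', f')$ over some base $\Sigma'$ with boundary $\mathcal{B}'$, I simply take $(X, f) = (X' \cup W,\, f' \cup \mathrm{pr}_{h^1})$ over $\Sigma = \Sigma' \cup h^1$; this is again a Lefschetz fibration (no new critical points) with boundary $\mathcal{B}$, and if $X'$ is Stein compatible with $\mathcal{B}'$, gluing on the Stein cobordism yields a Stein structure on $X$ compatible with $\mathcal{B}$. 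The main obstacle is not the topological extension but ensuring that the Stein structure on $W$ induces the correct contact structures on both ends, which rests essentially on the convex-surface-theoretic uniqueness of compatible contact structures on framed spinal open books combined with the Gompf--Thurston construction from the appendix.
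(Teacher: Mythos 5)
Your proposal has the right topological picture --- the cobordism is indeed a fibered $1$-handle $W \cong F \times D^2$ attached along two truncated pages --- and invoking Propositions~\ref{prop:uniqueframed} and~\ref{prop:welldefined} to identify the boundary contact structures is reasonable \emph{once} a Weinstein/Stein cobordism structure on $W$ has been established. However, the sentence ``by adjusting the plurisubharmonic function one can arrange the Liouville field to point into $W$ along the attaching region and out of $W$ along the remaining boundary'' is exactly where the work happens, and it is not justified. A plurisubharmonic function on $F \times D^2$ (whether from the product structure on two Riemann surfaces or from the Thurston--Gompf construction of the appendix) produces a gradient Liouville field pointing \emph{outward} along all of $\partial W$; this makes $W$ a Stein \emph{filling} of its full boundary, not a relative Stein cobordism with a concave end. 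To obtain a cobordism whose negative end is the attaching region, one needs a Weinstein structure with interior critical points, i.e.\ an honest Weinstein handle decomposition of $W$, and one must then verify that the induced contact structures at the two ends are the ones claimed.

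This is exactly what the paper's proof supplies, and the convex-surface-theoretic material you bypass is doing real work rather than just tidying conventions. The paper first shows the spinal tap surface $S$ can be isotoped to a foldable convex surface, then identifies folding $(Y,\xi)$ along $S$ with a sequence of contact $(+1)$-surgeries along a Legendrian link on $S$ followed by a contact $1$-handle removal. Reversing direction realizes the cobordism from $(Y',\xi')$ to $(Y,\xi)$ explicitly as a single Weinstein $1$-handle together with $b_1(S_+)$ Weinstein $2$-handles, attached along the dual link to a decomposing arc system of $S_+$. That handle picture is also what gives the ``moreover'' clause, since those handles are precisely the handle decomposition used to extend a Lefschetz fibration as an $F$-bundle over an extra $1$-handle of the base. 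Your argument could likely be repaired by reorganizing $F \times D^2$ into these Weinstein handles and checking the contact identifications --- but at that point it essentially \emph{is} the paper's proof; the uniqueness propositions alone do not substitute for verifying that the cobordism is Weinstein.
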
 	

The proof of this proposition is broken down into two parts. First we construct the desired Stein cobordism, verifying that the two contact structures at either end of the cobordism are compatible with the specified spinal open book. Then we show that this cobordism behaves nicely with respect to a Lefschetz fibration with boundary $\mathcal{B}'$.

We'll call a closed convex (or sutured) surface \emph{foldable} if it admits an orientation reversing self-diffeomorphism, fixing the dividing set (or sutures) and sending $R_+$ to $R_-$ (and vice versa). Equivalently, a foldable convex surface is one that arises as the boundary of an $I$-invariant neighborhood of a convex surface with boundary parallel dividing set (or as the boundary of a product contact or sutured manifold).

The following proposition shows that the there are nice surfaces in spinal open books (which generalize the idea of a union of a pair of fibers in an open book as in \cite{Torisu} for example), for which the \emph{spinal tap} operation on spinal open books described above is equivalent to the convex cut-and-paste operation of \emph{folding} along a convex surface.

\begin{proposition} Let $S = F_1 \cup F_2$ be a spinal tap surface in a spinal open book $\mathcal{B}$ and let $\mathcal{B}'$ be the result of a spinal tap along $S$. Denote by $\xi$ the contact structure supported by $\mathcal{B}$ and by $\xi'$ that of  $\mathcal{B}'$. Then we can make $S$ a foldable convex surface in $\xi$ so that $\xi'$ is the result of folding $\xi$ along $S$. 
\end{proposition}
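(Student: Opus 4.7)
The plan is to make $S$ convex with the desired foldable structure, perform the folding operation of convex surface theory, and then verify that the resulting contact structure supports $\mathcal{B}'$; uniqueness (Proposition~\ref{prop:welldefined}) then identifies it with $\xi'$.

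First, using Proposition~\ref{prop:uniqueframed}, I would work with a framed representative of $\mathcal{B}$ and arrange $\xi$ so that each interface torus is convex with dividing set of slope $-1$, each page of $\mathcal{P}$ (in particular $F_1$ and $F_2$) is convex with Legendrian boundary and boundary-parallel dividing set whose $R_-$ regions are bigons, and each vertebra of $\mathcal{S}$ is convex with boundary-parallel dividing set. The annular components of $S$ that lie in $\mathcal{S}$ can then be isotoped to convex annuli whose dividing sets consist of two arcs parallel to the $S^1$-fibers, matching the dividing curves of $F_1$ and $-F_2$ on the interface tori. This makes $S = F_1 \cup (\text{annuli}) \cup (-F_2)$ a closed convex surface in $(Y,\xi)$.

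Next, I would check that $S$ is foldable. The orientation-preserving diffeomorphism $h \co F_1 \to F_2$ from the spinal tap construction, viewed as $h \co F_1 \to -F_2$, becomes orientation reversing; since $F_1$ and $-F_2$ carry the same boundary-parallel dividing pattern (with $R_+(-F_2) = R_-(F_2)$), we may isotope $h$ to carry dividing set to dividing set and $R_\pm(F_1)$ onto $R_\mp(F_2) = R_\pm(-F_2)$. Extending by the obvious orientation-reversing involution on each connecting annulus (swapping the two $R_\pm$ half-annuli across the dividing arcs) produces an orientation-reversing self-diffeomorphism $\sigma \co S \to S$ fixing the dividing set and swapping $R_+ \leftrightarrow R_-$. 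Hence $S$ is foldable, and folding $(Y,\xi)$ along $S$ produces a new contact $3$-manifold $(Y_{\mathrm{f}}, \xi_{\mathrm{f}})$ in which a neighborhood of $S$ has been replaced by an $I$-invariant neighborhood of a convex surface with the common $R_+$-side of $S$.

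The topological effect of this folding is to cut $Y$ along $S$ and glue in two copies of $F_1 \times [0,1]$ via $\mathrm{id}$ and $h$ on the respective boundaries, which is precisely the abstract spinal tap operation; therefore $Y_{\mathrm{f}} \cong Y'$. It remains to verify that $\xi_{\mathrm{f}}$ is compatible with $\mathcal{B}'$. The new paper $\mathcal{P}'$ consists of the old paper minus neighborhoods of $F_1, F_2$ with the two $F_1\times[0,1]$ layers inserted; since these layers are contact product neighborhoods arising from the $I$-invariant model and their pages have the same boundary-parallel dividing set as $F_1$, every page of $\mathcal{P}'$ can be made convex with boundary-parallel dividing set whose complement is disk decomposable. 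The new spine $\mathcal{S}'$ is $\mathcal{S}$ cut along the chosen annuli, and the new interface tori are built from halves of the old interface tori glued to the boundary annuli of $F_1 \times I$; the dividing slopes are inherited from the original framed structure and remain negative. Thus $\mathcal{B}'$ supports $\xi_{\mathrm{f}}$, and Proposition~\ref{prop:welldefined} gives $\xi_{\mathrm{f}} \simeq \xi'$.

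The main technical hurdle will be Step 3, namely the bookkeeping that matches the convex folding model near $S$ with the topological spinal tap: one must check that the two $F_1 \times I$ layers inserted by folding are glued by $\mathrm{id}$ and $h$ (and not, say, by $h$ twice), and that the halves of the old interface tori reassemble to convex tori of the correct negative slope so that compatibility with $\mathcal{B}'$ as a framed spinal open book holds without needing further bypass adjustments — this is where the choice $\sigma|_{\mathrm{annuli}}$ of the fold involution and the $-1$-slope normalization of the dividing set on the old interface tori are essential.
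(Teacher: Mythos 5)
Your overall strategy --- make $S$ a foldable convex surface, fold along it, and verify compatibility with $\mathcal{B}'$ --- matches the paper's. The gap lies in the step you dispatch in a single sentence, where you assert that the annular pieces of $S$ lying in the spine ``can be isotoped to convex annuli whose dividing sets consist of two arcs parallel to the $S^1$-fibers.'' This is the heart of the proof and it is not justified. Even after normalizing the interface tori to slope $-1$, the vertical annulus over a Legendrian-realized arc $a\subset\Sigma$ does not close up in the spine in a way compatible with the boundary conditions coming from $F_1$ and $F_2$: as $a$ is slid around the $S^1$ direction it returns shifted over boundary-parallel bigons of $\Sigma_-$ (exactly the phenomenon exploited in the proof of Lemma~\ref{lm:framing change}), so the two sub-annuli swept out from the two boundary components of $a$ have mismatched orientations and must be spliced by a local ``fold'' (Figure~\ref{fig:folded annulus}). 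The convex annulus $A$ one obtains has the dividing set shown in Figure~\ref{fig:convex annulus}, and it is the specific structure of this dividing set that makes $S$ foldable and, later, makes the folded contact structure isotopic to an $S^1$-invariant one on the new paper.

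You correctly flag the bookkeeping in your Step~3 as a remaining hurdle, but the paper sidesteps a literal convex fold there by ``crimping'' the edges of $Y\setminus S$ and gluing in a standard product $F_1\times[0,1]$ with $A$'s standard neighborhood kept isolated (Figure~\ref{fig:crimping the paper}); this makes the $S^1$-invariance of the result on $\mathcal{P}'$ manifest without the case analysis you anticipate. So the logical skeleton and the final appeal to Proposition~\ref{prop:welldefined} are fine, but the hardest and most specific piece --- constructing $A$ as a genuinely convex annulus with the correct dividing set --- is missing.
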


\begin{proof} First we need to construct a suitable convex surface isotopic to the spinal tap surface $S$. Let $\Sigma$ be vertebra in a component of the spine and let $a$ be the isotopy class of the arc in $\Sigma$ used to construct $S$. Let $F_1$ and $F_2$ be two fibers of $\mathcal{P}$ with boundary isotopic to the circle in $\mathcal{S}$ above $\partial a$. We want to construct a nice convex representative of the annulus $A$ over $a$ so that the boundary circles are Legendrian isotopic in a product neighborhood of $T$ to the boundaries of $F_1$ and $F_2$. We can Legendrian realize $a$ on $\Sigma$ so that is misses the dividing set. After sliding around the $S^1$ direction, $a$ will jump some number of boundary parallel bigons of $\Sigma_-$ to the right on each boundary. (cf. Figure \ref{fig:section}.) We want to construct $A$ in two steps, first moving to the right along one boundary component,then moving to the right on the other. In each case, we sweep out a subsurface of $\Sigma$, which can be made convex with dividing set that inherited from $\Sigma$. However, the orientations on these subsurfaces do not agree, and so we need to ``fold'' the two surfaces to match them up as in Figure \ref{fig:folded annulus}. Doing so gives a nice, convex vertical annulus $A$ with dividing set shown in Figure \ref{fig:convex annulus}. Since the boundary circles of $A$ are formed from a vertical arc disjoint from the dividing set and a horizontal annulus contained in the boundary of $\Sigma$, they are isotopic within the product neighborhood of $T$ to the boundaries of the fibers. 

\begin{figure}
\labellist
\pinlabel $a$ at 82 24 
\pinlabel $a$ at 85 112
\endlabellist
\includegraphics[width=1.7in]{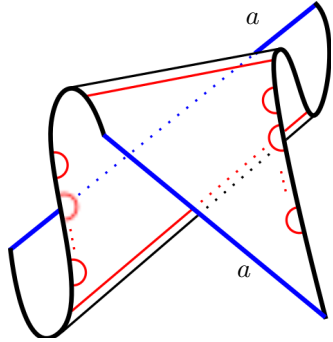}
\caption{\label{fig:folded annulus} A local projection of a vertical annulus being made convex.}
\end{figure}

\begin{figure}

\includegraphics[width=1.5in]{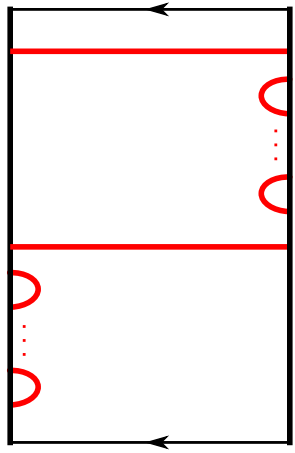}
\caption{\label{fig:convex annulus} The dividing set on the convex annulus $A$.}
\end{figure}

The new spine $\mathcal{S}'$ will be formed by cutting $\mathcal{S}$ along $A$. In particular, we want to add a neighborhood of $A$ to $\mathcal{P}$ and round the resulting boundary to get our new interface tori $T'$, with $\mathcal{S}'$ being the portion of $\mathcal{S}$ on the inside of $T'$ and taking $\Sigma \backslash a$, rounded, to be our new section $\Sigma'$. A schematic is shown in Figure \ref{fig:sigma view}. Since $\mathcal{S}'$ is tight (as a subset of $\mathcal{S}$), cutting $\mathcal{S}'$ along $\Sigma'$ and rounding gives a disk decomposable handlebody, and so $\xi_0$ and $\mathcal{S}'$ remain compatible.

\begin{figure}
\vspace{.1in}
\labellist
\pinlabel {\LARGE $\mathcal{P}$} at 129 468
\pinlabel {\LARGE $\mathcal{P}$} at 246 468
\pinlabel {\LARGE $\mathcal{S}$} at 177 471
\pinlabel $T$ at 159 486
\pinlabel $T$ at 223 486
\pinlabel $T'$ at 168 451
\pinlabel $T'$ at 168 427 
\pinlabel $F_1$ at 116 439
\pinlabel $F_2$ at 269 439
\pinlabel $a$ at 218 443
\pinlabel $\Sigma$ at 213 472
\endlabellist
\includegraphics[width=4in]{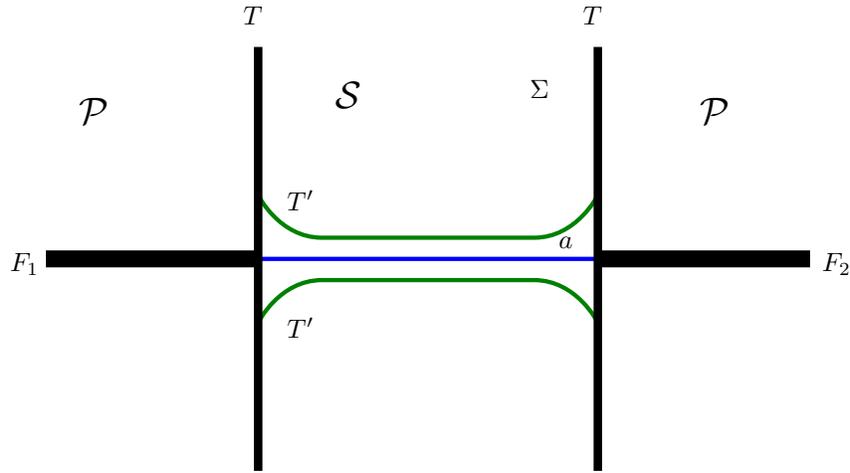}
\caption{\label{fig:sigma view} A look at the spine and paper projected to $\Sigma$. The new interface tori and spine are shown.}
\end{figure}

Now let's look at the complement of $\mathcal{S}'$, which is $\mathcal{P} \cup \nu(A)$ with the boundary rounded. To construct $\xi'$, we will cut along $S$ and fold the two boundaries back upon themselves. However, it will be easier to actually crimp the edges of $Y \backslash S$ and glue in a product contact manifold $F_1 \times [0,1]$, keeping $A$ and it's standard neighborhood isolated. A schematic of this is shown in Figure \ref{fig:crimping the paper}. In particular, the horizontal boundary components are the original fiber surfaces $F_1$ and $F_2$, and the vertical surface (as shown in Figure \ref{fig:crimping the paper}) is the convex structure we placed on the annulus $A$.

\begin{figure}
\noindent\makebox[\textwidth]{%
$ 
\begin{array}[1.5\textwidth]{ccc}
\labellist
\pinlabel $\mathcal{P}$ at 129 468
\pinlabel $\mathcal{P}$ at 246 468
\pinlabel $T'$ at 168 452
\pinlabel $T'$ at 168 427 
\pinlabel $F_1$ at 116 439
\pinlabel $F_2$ at 269 439
\pinlabel $S$ at 235 447
\endlabellist
\makebox[2.6in]{\includegraphics[width=2.2in]{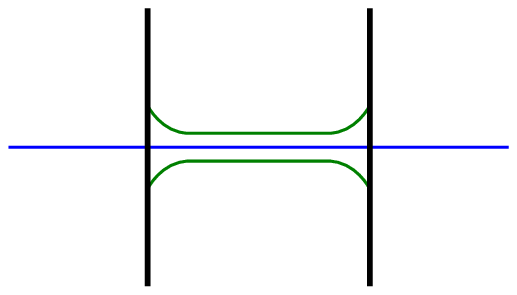} } &

\labellist
\pinlabel $\mathcal{P}$ at 129 468
\pinlabel $\mathcal{P}$ at 246 468
\pinlabel $T'$ at 169 451
\pinlabel $T'$ at 169 417 
\pinlabel $F_1$ at 116 439
\pinlabel $F_2$ at 269 439
\pinlabel $S_+$ at 235 447
\pinlabel $S_-$ at 235 422
\endlabellist
\makebox[2.6in]{\includegraphics[width=2.2in]{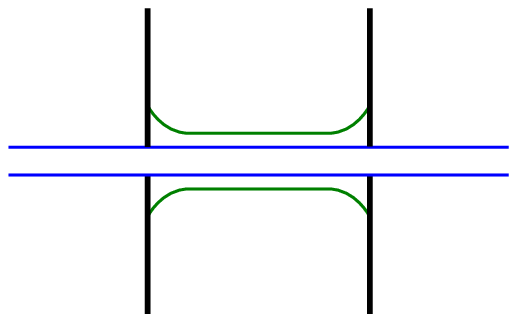} } &

\labellist
\pinlabel $\mathcal{P}$ at 10 70
\pinlabel $\mathcal{P}$ at 10 10
\pinlabel $T'$ at 48 41
\pinlabel $F_1$ at -5 49
\pinlabel $F_2$ at -5 33
\pinlabel $S_+$ at 35 52
\pinlabel {\Large $F \x [0,1]$} at 19 40
\endlabellist
\makebox[1.5in]{\centering \includegraphics[width = 1.1in]{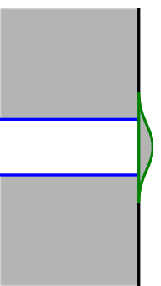} } \\
\end{array} 
$
}

\caption{\label{fig:crimping the paper} Preparing to fold $\mathcal{P} \cup \nu( A)$, we crimp the edges so that we may glue in $F \x [0,1]$.}
\end{figure}

We can then glue in $F_1 \times [0,1]$ as prescribed by the spinal tap. Moreover, since we folded $S$ so as to preserve the dividing set on $A$, after gluing in $F_1 \times [0,1]$ the contact structure is isotopic to an $S^1$-invariant contact structure -- i.e., the ``bump'' we added rounding near $A$ thus just extends the contact structure by a standard product neighborhood. This constructs the contact structure on $\mathcal{P}'$, and since it is $S^1$-invariant, it remains compatible with $\mathcal{P'}$.

\end{proof}

The last piece we need for the proof of Proposition~\ref{prop:unfolding} involves equating convex folding with a sequence of contact $(+1)$-surgeries and $1$-handle removal.  

\begin{lemma} Let $S$ be a spinal tap surface in a contact $3$-manifold $(Y, \mathcal{B}, \xi)$ and let $(Y', \mathcal{B}', \xi')$ be the result of folding along $S$. Then there is a Stein cobordism from $(Y', \xi')$ to $(Y, \xi)$. Moreover, if $\mathcal{B}'$ is the boundary of a Lefschetz fibration $(X',f')$, this fibration can be extended along the Stein cobordism to a Lefschetz fibration $(X,f)$ with boundary $\mathcal{B}$.

\end{lemma}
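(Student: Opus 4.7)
The plan is to decompose the folding cobordism from $Y'$ to $Y$ into standard Stein handles---one $4$-dimensional $1$-handle and a collection of Legendrian $2$-handles---and verify that the Lefschetz fibration extends across each piece by construction. This is the convex surface incarnation of the hinted equivalence ``folding $=$ contact $(+1)$-surgery on certain Legendrian knots followed by a $1$-handle cancellation'' read in reverse (with Legendrian $(-1)$-surgery replacing contact $(+1)$-surgery).

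Topologically, the spinal tap removes a properly embedded arc $a$ from each vertebra of the spine, so $\mathcal{B}$ is recovered from $\mathcal{B}'$ by adding a $2$-dimensional $1$-handle $h^1 \cong D^1 \times D^1$ to every vertebra. The corresponding $4$-dimensional cobordism from $Y'$ to $Y$ is $W := F_1 \times h^1$, glued to $Y'$ along two copies of $F_1 \times D^1$ sitting in $\mathcal{P}'$ near the two folded pieces. In the Lefschetz fibration setting this realizes the base of the extended fibration as $\Sigma = \Sigma' \cup h^1$, so the map $f := f' \cup \operatorname{pr}_{h^1} : X' \cup W \to \Sigma$ is an allowable Lefschetz fibration extending $f'$ with no new critical points, whose induced spinal open book on the boundary is exactly $\mathcal{B}$.

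To endow $W$ with a Stein structure, I would pick a handle decomposition of $F_1$ into one $0$-handle and $k = 2g(F_1) + |\partial F_1| - 1$ $1$-handles, and cross each $F_1$-handle with $h^1$ to decompose $W$ as a single $4$-dimensional $1$-handle plus $k$ $4$-dimensional $2$-handles. The $1$-handle is attached to $Y'$ along an $S^0 \times D^3$ inside the two copies of $F_1 \times D^1$ in $\mathcal{P}'$, so it is a standard Stein $1$-handle. Using the convex description of $\xi$ near $S$ constructed in the preceding proposition---interface tori with negatively sloped dividing set, and fibers and vertebrae simultaneously convex with boundary-parallel dividing sets---Theorem~\ref{thm:giroux flexibility} lets us Legendrian realize the attaching circle of each $2$-handle inside the horizontal boundary of $W$ so that the product framing from $F_1 \times h^1$ coincides with $\operatorname{tb}-1$. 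By Theorem~\ref{Eliashberg}, this exhibits $W$ as Stein, and its induced contact structure on $\partial X = Y$ is $\xi$ by Proposition~\ref{prop:uniqueframed}.

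The main obstacle is the framing calculation: one must verify that the product framing from $F_1 \times h^1$ agrees with $\operatorname{tb} - 1$ for each of the $k$ attaching circles. This mirrors the analogous step in the Loi--Piergallini/Akbulut--Ozbagci proof of Theorem~\ref{PALF}, where the Lefschetz $2$-handles acquire framing $\operatorname{tb}-1$ relative to the page; here the cocores of the $1$-handles of $F_1$ play the role of vanishing cycles, and the $-1$ discrepancy is forced by the choice of negative slope on the interface torus together with the foldability of $S$, which together pin down the characteristic foliation on the horizontal boundary of $W$ up to the flexibility covered by Theorem~\ref{thm:giroux flexibility}.
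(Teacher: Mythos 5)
Your overall architecture matches the paper's: realize the cobordism from $(Y',\xi')$ to $(Y,\xi)$ as one $4$-dimensional $1$-handle followed by $b_1(F_1)$ Weinstein $2$-handles (the reverse of contact $(+1)$-surgery plus $S^1\times S^2$-removal), and observe that this handle decomposition is precisely what one uses to extend the $F$-bundle over a new $1$-handle of the base, via $f := f'\cup\operatorname{pr}_{h^1}$. The Lefschetz-fibration extension step in your proposal is correct and stated cleanly.

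There is, however, a genuine gap at exactly the point you flag as ``the main obstacle.'' You assert that the product framing from $F_1\times h^1$ agrees with $\operatorname{tb}-1$, attributing this to Theorem~\ref{thm:giroux flexibility} plus ``the choice of negative slope on the interface torus together with the foldability of $S$,'' but you never prove it. Theorem~\ref{thm:giroux flexibility} only says that the germ of $\xi$ near a convex surface is determined by its dividing set; it neither produces a Legendrian realization of a prescribed curve nor computes the resulting Thurston--Bennequin invariant, and the appeal to an ``analogous step'' in Loi--Piergallini is not a substitute. This framing identity is the substance of the lemma. The paper closes it concretely: choose an arc decomposition $\{a_i\}$ of $S_+$ and form the Legendrian link $\mathcal{L}$ on the convex surface $S$ with components $l_i = a_i \cup h(a_i)$; since $S$ is foldable each $l_i$ meets the dividing set $\Gamma_S$ exactly twice, so its Legendrian realization has $\operatorname{tb}(l_i) = -\tfrac12\#(l_i\cap\Gamma_S) = -1$ relative to the $S$-framing, i.e. $\operatorname{tb}=pf-1$. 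One then verifies directly (by the neighborhood analysis of the $l_i\times[0,1]\times[0,1]$ solid tori and the resulting $S^1\times S^2$) that $0$-surgery along $\mathcal{L}$ followed by removing that $S^1\times S^2$ is the fold, and that on the contact side this glues in thickened decomposing disks and standard $3$-balls, so the result is compatible with $\mathcal{B}'$. Reading upside down gives the Weinstein cobordism and identifies the $2$-handle attaching circles as the dual link, with the arcs $a_1\subset\tilde S_1$, $a_3\subset\tilde S_2$ matched by the gluing map of the $F$-bundle. Without this concrete Legendrian link and dividing-set computation, the crucial $\operatorname{tb}-1$ claim is unsupported; relatedly, your closing appeal to Proposition~\ref{prop:uniqueframed} only yields $\xi$ once one has first verified that the boundary contact structure is actually supported by $\mathcal{B}$, which is exactly what the explicit decomposing-disk matching accomplishes.
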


\begin{proof} (cf. \cite{Baldwin}, \cite{Avdek}) Let $S$ be a convex surface in $(Y, \xi)$ and suppose $S_+$ and $S_-$ are homeomorphic surfaces. Folding $Y$ along $S$ gives a new contact manifold - first we cut $Y$ along $S$ and then we glue in two copies of $S_+\x [0,1]$, one to each boundary component $S_1$ and $S_2$ of $Y \bs S$. As in the definition of the spinal tap, we choose an orientation reversing diffeomorphism $h:S_+ \to S_-$ which preserves the identification of $\partial S_+$ with $\partial S_-$ given by $S$ and glue by the following identifications:
$$\begin{array}{c} S_+ \x \{0\} \xrightarrow{id} S_+ \\ S_+ \x \{1\} \xrightarrow{h} S_- \\
\end{array}$$
on $S \x \{ 1 \} $
and 
$$\begin{array}{c} -S_+ \x \{0\} \xrightarrow{\bar{h}} -S_- \\ -S_+ \x \{1 \} \xrightarrow{id} -S_+ \\
\end{array}$$
on $S \x \{ 0 \} $

Since $S_+ \x [0,1]$ is disk decomposable, we can take a collection of decomposing arcs for $S_+$ and extend them to a collection of decomposing disks for $S_+ \x [0,1]$. Gluing in $S_+ \x [0,1]$, then, is the same as gluing in these decomposing disks and then filling in the remaining $S^2$ boundaries (or boundary, if the collection is minimal) with  standard contact $3$-balls.

We want to compare this with the following surgery construction. Let $\{a_i\}$ be an arc decomposition of $S_+$ and extend each arc into $S_-$ on $S$ by $h(a_i)$. This will give a Legendrian link $\mathcal{L}$ on $S$, each component of which has Thurston-Bennequin number one less than the framing induced by $S$, i.e., $tb = pf -1$. Contact (+1)-surgery is then topological 0-surgery. 

Let $l_i$ be a component of $\mathcal{L}$. A standard neighborhood of $l_i$, $\nu(l_i)$, framed by $S$ is $l_i \x [0,1] \x [0,1]$, where the first $I$ factor is the neighborhood in $S$ and the second gives the vertical direction. The boundary of this neighborhood consists of four annuli, two horizontal and two vertical. Cutting along $S$ is the same as cutting out all of these neighborhoods, plus removing a neighborhood of the resulting complementary punctured disk $S \backslash \mathcal{L}$. Topological 0-surgery along $\mathcal{L}$ glues in a solid torus so that meridional disks get attached to the longitudinal fibers of $\partial \nu(l_i)$. These are the same longitudes as the fibers in the four annuli which make up $\partial \nu(l_i)$. Attaching disks along the horizontal annuli folds the skeleta of $S_1$ and $S_2$ together. Attaching disks along the horizontal annuli caps off the punctures of the complementary disk $S-\mathcal{L}$, yielding an $S^2 \x I$ region. If we further cut along this $S^2$ and glue in two $3$-balls, this gives the result of folding along $S$. 

On the contact side, then, we can think of (+1)-surgery on $\mathcal{L}$ as follows. First we cut along $S$. Then we attach a pair of thickened standard decomposing disks $D^2 \x [0,1]$ along each component $l_i$ of $\mathcal{L}$, one sitting on each boundary $S_1$ and $S_2$. The new boundary is then a pair of standard convex  2-spheres, which get glued together. 

To finish the construction and end with the folded manifold $Y_0$, we need to cut along this convex $S^2$ and fill in with two standard contact $3$-balls --- i.e., we need to remove a standard contact $S^1 \x S^2$.

Thus $Y'$ is built from $Y$ by a sequence of contact $(+1)$-surgeries and a $4$-dimensional $1$-handle removal. The reverse cobordism from $Y'$ to $Y$ consists of a single Weinstein 1-handle, and $b_1(S_+)$ Weinstein 2-handles, which gives a Stein cobordism from $Y'$ to $Y$. 

We can understand the upside down cobordism as well. After folding, $Y'$ has two surfaces $\tilde{S}_1$ and $\tilde{S}_2$ which are naturally convex, have transverse boundary and trivial dividing set. In particular, they are (subsets of) pages of the spinal open book $\mathcal{B}'$. Unfolding consists of removing neighborhoods of these two surfaces and gluing together the resulting convex boundary surface $S$. The 1-handle is easy. The 2-handles are attached along the dual link to $\mathcal{L}$ in $Y' \# S^1 \x S^2$. Each component of the dual link consists of four arcs, $a_1, a_2, a_3, a_4$, each dual to the $D^2 \times I$ subset of the surgery solid torus as described above. The arc $a_1$ lies on $\tilde{S}_1$, $a_3$ lies on $\tilde{S}_2$, and the arcs $a_2$ and $a_4$ run between them across the 1-handle.  

If $\mathcal{B}'$ is already the boundary of a Lefschetz fibration $L'$, then the surfaces $\tilde{S}_1$ and $\tilde{S}_2$ are each fibers of $L' = F\hookrightarrow X' \xrightarrow{\pi} \Sigma'$ and this handle decomposition is precisely the handle decomposition used to extend $L'$ as an $F$-bundle over an additional 1-handle attached to $\Sigma'$. Moreover, the gluing map used to extend $L'$ identifies the arcs $a_1$ on $\tilde{S}_1$ with $a_3$ on $\tilde{S}_2$.

\end{proof}

\noindent The above Lemma concludes the proof of Proposition~\ref{prop:unfolding}.

\vspace{0.1in}
\subsection{Stein structures on Lefschetz fibrations over arbitrary surfaces} \

With the machinery we have developed in the previous subsections in hand, we can now prove: 

\enlargethispage{0.5in}
\begin{theorem} \label{thm:steinfilling} Suppose $(Y, \mathcal{B})$ is the boundary of an allowable Lefschetz fibration $(X, f)$, and let $\xi$ be the contact structure supported by $\mathcal{B}$. Then $X$ admits a Stein structure $J$ whose convex, contact boundary is $\xi$, i.e. $(X,J)$ is a Stein filling of $(Y,\xi)$.
\end{theorem}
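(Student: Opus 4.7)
The plan is to reduce to the case of an allowable Lefschetz fibration over the $2$-disk, where the Loi--Piergallini theorem (Theorem~\ref{PALF}) directly supplies a Stein structure, and then to rebuild $(X,f)$ one $1$-handle of the base at a time using the Stein cobordisms provided by Proposition~\ref{prop:unfolding}.

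First I would choose a collection of disjoint properly embedded arcs $a_1,\dots,a_k$ in $\Sigma$ which cut $\Sigma$ into a disk. Cutting $X$ along the preimages $f^{-1}(a_i)$ produces an allowable Lefschetz fibration $f_0 \colon X_0 \to D^2$, whose boundary carries an ordinary open book decomposition $\mathcal{B}_0$ (i.e.\ a spinal open book whose spine is a disjoint union of solid tori). By Theorem~\ref{PALF}, $X_0$ admits a Stein structure $J_0$ whose convex boundary is the contact structure $\xi_0$ supported by $\mathcal{B}_0$.

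Next I would reverse the cutting one arc at a time. Reattaching $a_i$ amounts to gluing the two copies of the fiber $F$ lying over its endpoints, and at the level of spinal open books this is precisely a spinal tap in the sense of Section~\ref{spinal tap}: the tap surface $S_i$ consists of those two fibers together with the annuli of $S^1$-fibers in the interface tori above $a_i$, and the abstract spinal tap combinatorics exactly match the effect of extending the $F$-bundle structure over a $1$-handle attached to the base. Applying Proposition~\ref{prop:unfolding} at each step produces a Stein cobordism on top of the previous stage along which the Lefschetz fibration extends, and after $k$ iterations we obtain a Stein structure $J$ on $X$.

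Finally one must verify that the contact structure on $\partial X = Y$ induced by $J$ is isotopic to the given $\xi$; this is automatic from Proposition~\ref{prop:welldefined}, since $\mathcal{B}$ supports a unique isotopy class of compatible contact structure and each intermediate cobordism ends on the contact structure supported by the next-stage spinal open book. The main obstacle I expect is bookkeeping rather than substance: one must check that the topological spinal tap faithfully mirrors $1$-handle attachment to the base of a Lefschetz fibration, and that the gluing map $h \colon F_1 \to F_2$ used in the spinal tap construction is compatible with the monodromy data of $f$ around the attached handle. The abstract description in Section~\ref{spinal tap} is engineered precisely to make this identification transparent, so this should not present a serious difficulty once the combinatorial setup is laid out carefully.
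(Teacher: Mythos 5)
Your proposal is correct and follows essentially the same approach as the paper: cut along a system of properly embedded arcs (disjoint from the critical values) to reduce to an allowable Lefschetz fibration over the disk, invoke Loi--Piergallini for the base case, and reconstruct $(X,f)$ by attaching one base $1$-handle at a time via the Stein cobordisms of Proposition~\ref{prop:unfolding}, with uniqueness of the supported contact structure (Proposition~\ref{prop:welldefined}) ensuring the final filling is of $\xi$. The one small thing you should say explicitly is that the arcs $a_i$ are chosen to avoid the critical values of $f$, so that the preimages $f^{-1}(a_i)$ are honest $F\times I$ pieces and the boundary surfaces are genuine spinal tap surfaces.
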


\begin{proof} Let $F \hookrightarrow X \xrightarrow{f} \Sigma$ be a Lefschetz fibration with boundary $\mathcal{B}$. Take a properly embedded arc in $\Sigma$ which is disjoint from the critical values of $f$. Then $\ds S = f^{-1} (a)\big|_{\partial X}$ is a spinal tap surface in $\mathcal{B}$. If we cut $X$ along $f^{-1}(a)$ we get a new Lefschetz fibration $F \hookrightarrow X' \xrightarrow{f'} \Sigma'$ with boundary $\mathcal{B}'$, the spinal open book formed by the spinal tap along $S$. By Proposition~\ref{prop:unfolding}, there is a Stein cobordism which extends $f'$ on $X'$ to $f$ on $X$. If we then take a set of decomposing arcs for $\Sigma$ and cut along their $F \times I$ preimages in $L$, we are left with a Lefschetz fibration over the disk whose boundary is the result of the successive spinal taps on $\mathcal{B}$. Since the resulting Lefschetz fibration over a disk admits a Stein structure filling the boundary open book, repeatedly applying Proposition~\ref{prop:unfolding} proves that $X$ also admits a Stein structure filling its boundary spinal open book $\mathcal{B}$.

\end{proof}

Combining the above theorem with the stronger result of Loi and Piergallini (See Theorem~\ref{PALF}), we get:

\begin{corollary} \label{LPgeneralization}
An oriented compact $4$--manifold with boundary is a Stein surface, up to orientation preserving diffeomorphisms, if and only if it admits an allowable Lefschetz fibration over a compact surface with non-empty boundary. Moreover, any two allowable Lefschetz fibrations filling the same spinal open book carry Stein structures which fill the same contact structure induced by the spinal open book. 
\end{corollary}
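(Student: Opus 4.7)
The plan is to piece this together from the results already established. The corollary has two parts: the ``if and only if'' characterization, and the uniqueness of the induced contact structure.

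First, I would handle the forward direction: if $X$ is a Stein surface with boundary, then by Theorem~\ref{PALF} of Loi--Piergallini, $X$ admits an allowable Lefschetz fibration over the $2$-disk. Since the $2$-disk is a compact surface with non-empty boundary, this is a special case of an allowable Lefschetz fibration over a compact surface with non-empty boundary, so we are done. This direction is immediate and requires no work.

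For the backward direction, suppose $X$ admits an allowable Lefschetz fibration $f: X \to \Sigma$ over a compact surface $\Sigma$ with non-empty boundary. Then by Theorem~\ref{thm:steinfilling}, $X$ admits a Stein structure $J$ whose convex boundary is the contact structure $\xi$ supported by the induced spinal open book $\mathcal{B}$ on $\partial X$. In particular, $X$ is (diffeomorphic to) a Stein surface.

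For the ``moreover'' clause, suppose $(X_1, f_1)$ and $(X_2, f_2)$ are two allowable Lefschetz fibrations whose boundaries are identified with the same spinal open book $\mathcal{B}$ on a $3$-manifold $Y$. By Proposition~\ref{prop:welldefined}, $\mathcal{B}$ determines a unique isotopy class of contact structure $\xi$ on $Y$. Applying Theorem~\ref{thm:steinfilling} to each of $(X_1, f_1)$ and $(X_2, f_2)$ produces Stein structures $J_1$ on $X_1$ and $J_2$ on $X_2$, each of whose convex boundary is precisely this same contact structure $\xi$. This gives the desired conclusion.

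I do not expect any substantial obstacle here: the corollary is essentially a bookkeeping statement combining Theorem~\ref{thm:steinfilling} (which does the genuine work of constructing the Stein structure via the spinal tap / folding machinery), Theorem~\ref{PALF} (for the reverse implication), and Proposition~\ref{prop:welldefined} (which guarantees that the contact structure is well-defined from the spinal open book alone, independently of the framing or any Lefschetz fibration filling it). The only mild subtlety is making sure we phrase ``filling the same spinal open book'' so that the identifications of the boundaries of $X_1$ and $X_2$ with $Y$ are compatible with the spinal open book structure on $\partial X_i$ coming from $f_i$, but this is built into the setup.
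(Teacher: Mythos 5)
Your proof is correct and takes essentially the same approach as the paper, which treats the corollary as an immediate consequence of Theorem~\ref{PALF} (for the forward direction), Theorem~\ref{thm:steinfilling} (for the converse and the contact boundary claim), and Proposition~\ref{prop:welldefined} (for independence of the framing). The paper provides no further argument beyond citing these results, so your bookkeeping matches.
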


In our constructions to follow we make repeated use of the following corollary, which generalizes a result of Akbulut and Ozbagci \cite{AO2}:

\begin{corollary} \label{LFextract}
Let $X$ be a 4-manifold, closed or with boundary, and $f\colon X \to \Sigma$ be an allowable Lefschetz fibration over any compact surface $\Sigma$, closed or bounded, $F$ a regular fiber and $S_1, \ldots, S_m \subset \text{Int}(X) \setminus \text{Crit}(f)$ a non-empty collection of \emph{disjoint} sections of this fibration. Let $X_0$ be the $4$-manifold we obtain from $X$ by excising fibered tubular neighborhoods of $F, S_1, \ldots, S_m$. Then $X_0$ admits a Stein structure. In particular, this holds when $f\colon X \to \Sigma$ is a Lefschetz fibration on a closed $4$-manifold $X$ and none of the Lefschetz vanishing cycles are separating. Moreover, if $f\colon X \to \Sigma$ and $f'\colon X' \to \Sigma$ are any two allowable Lefschetz fibration over a closed surface $\Sigma$ with regular fibers $F \cong F'$, and with disjoint sections $S_1, \ldots, S_m$ and $S'_1, \ldots, S'_m$ of matching self-intersection numbers, then $X_0= X \setminus (F \cup S_1 \cup \ldots S_n)$ and $X'_0= X' \setminus (F \cup S'_1 \cup \ldots S'_n)$ admit Stein structures inducing the same contact structure on their identified boundaries.
\end{corollary}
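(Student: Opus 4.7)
The plan is to reduce the existence claim to Corollary~\ref{LPgeneralization} by exhibiting $f_0 \colon X_0 \to \Sigma_0$ as an allowable Lefschetz fibration over a compact surface with non-empty boundary, and to reduce the moreover claim to the uniqueness clause of that same corollary by verifying that the induced spinal open book on $\partial X_0$ is determined up to framed isomorphism by $F$ and the self-intersection numbers $[S_i]^2$.

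For the existence, I would set $\Sigma_0 := \Sigma \setminus \mathrm{int}(\nu(f(F)))$ and $F_0 := F \setminus \bigsqcup_{i=1}^m \mathrm{int}(D_i)$, where each $D_i$ is a small disk cut out of a regular fiber by the tubular neighborhood of $S_i$. Since the excised fibered neighborhoods lie well away from the critical locus, restriction yields a Lefschetz fibration $f_0 \colon X_0 \to \Sigma_0$ whose critical points, vanishing cycles, and local monodromies coincide with those of $f$, now regarded on the punctured fiber $F_0$. The base $\Sigma_0$ has non-empty boundary because a disk has been removed from $\Sigma$; the fiber $F_0$ has non-empty boundary because $m \geq 1$; and each vanishing cycle, non-separating in $F$ by hypothesis (or by allowability of $f$), remains non-separating in the further-punctured $F_0$ once the sections are isotoped off it. Hence $f_0$ is allowable in the sense of Corollary~\ref{LPgeneralization}, which then endows $X_0$ with a Stein structure.

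For the moreover statement, with $\Sigma$ closed, I would describe the spinal open book on $\partial X_0$ as follows. The paper $\partial \nu(F) \cap X_0$ is a trivial $F_0$-bundle over $\partial \nu(f(F)) \cong S^1$, trivial because $f$ has no critical values in the disk $\nu(f(F))$. The spine has one component $\partial \nu(S_i) \cap X_0$ per section, namely the restriction to $\Sigma_0$ of the $S^1$-bundle $\partial \nu(S_i) \to \Sigma$, which is an oriented circle bundle whose Euler number (encoded via its framing along $\partial \Sigma_0$) equals $[S_i]^2$. Paper and spine are glued along the K\"unneth corners in $\partial(\nu(F) \cap \nu(S_i))$. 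Since oriented disk bundles over a surface are classified by Euler number, the hypotheses $F \cong F'$ and $[S_i]^2 = [S_i']^2$ force pairwise isomorphism of the papers and spines of $\partial X_0$ and $\partial X_0'$, with compatible gluings. The framed spinal open books therefore coincide, so the uniqueness clause of Corollary~\ref{LPgeneralization}, resting on Proposition~\ref{prop:welldefined}, yields the same induced contact structure on the identified boundaries.

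The main obstacle will be this last step: carefully identifying the framed spinal open book on $\partial X_0$ and verifying that all of its gluing and framing data is truly local, depending only on $F$ and on the self-intersection numbers, with no residual contribution from the interior Lefschetz monodromy of $f$. Once that local-to-global independence is secured, the rest follows by direct appeal to the machinery assembled in Section~\ref{Spinal} and Theorem~\ref{thm:steinfilling}.
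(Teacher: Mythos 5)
Your proposal is correct and follows essentially the same route as the paper's (very terse) proof: remove the fiber and sections to obtain an allowable Lefschetz fibration over $\Sigma_0 = \Sigma \setminus D^2$ with punctured fiber $F_0$, apply Corollary~\ref{LPgeneralization} for existence, and for the moreover clause observe that the resulting spinal open book depends only on $F$, $\Sigma$, and the self-intersection numbers of the sections. The ``main obstacle'' you flag is not actually an obstacle, and you already resolve it in passing: the paper bundle $\mathcal{P}\to S^1$ is trivial precisely because $\nu(f(F))$ contains no critical values, so the spinal open book has trivial monodromy, its spine components are circle bundles over $\Sigma\setminus D^2$ classified by the Euler numbers $[S_i]^2$, and hence the framed spinal open books on $\partial X_0$ and $\partial X_0'$ are isomorphic; one small wording note is that allowability only guarantees the vanishing cycles are homologically nontrivial (not non-separating) in $F_0$, but that is exactly what is needed and is preserved under puncturing since $H_1(F_0)\to H_1(F)$ is surjective with the classes of the vanishing cycles in its image.
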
 

\begin{proof} 
When we remove a fiber and a collection of disjoint sections from an allowable Lefschetz fibration (and in particular from a Lefschetz fibration with no separating vanishing cycles), we are left with another allowable Lefschetz fibration. The boundaries of the induced Lefschetz fibrations on $X_0$ and $X_0'$ are isomorphic spinal open books with spine $\whs = (\Sigma - D^2)_1 \cup \dots \cup (\Sigma - D^2)_m$ and page $F - ( D^2_1 \cup \dots \cup D^2_m )$. So the statements follow from the previous theorems. 
\end{proof}

\newpage 
\section{Contact $3$-manifolds admitting arbitrarily large Stein fillings} \label{ConstructLFs}

\subsection{Main construction} \

We are going to produce the families of Stein fillings promised in Theorem~\ref{mainthm}, by first engineering certain families of Lefschetz fibrations with distinguished sections. 

\begin{theorem} \label{LFs}
Let $g\geq 2$, $h \geq 1$, and $n \leq 2h-2$ be fixed integers. For any positive $m$, there is a genus $g$ Lefschetz fibration $(X(m), f(m))=(X_{g,h,n}(m), f_{g,h,n}(m))$ over a genus $h$ surface, such that 
\begin{enumerate}
\item $(X(m), f(m))$ has only non-separating Lefschetz vanishing cycles, and the rank of the critical locus $M(m)=M_{g,h,n}(m)$ is strictly increasing in $m$. \ 
\item $(X(m), f(m))$ admits a section $S_n=S_{g,h,n}(m)$ of self-intersection $n$. \
\end{enumerate}
Moreover, when $g=2$, for any fixed $h \geq 1$, the signature of $X(m)=X_{2,h,n}(m)$ is strictly decreasing in $m$.
\end{theorem}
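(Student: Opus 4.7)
The plan is to reduce the entire theorem to producing appropriate positive factorizations in the mapping class group $\Gamma_g^1$, and then to realize the $m$-family via an iterated positive substitution. Recall from Section~\ref{Prelim} that a factorization
\[
 t_{\delta}^{-n} \;=\; \prod_{i=1}^{N} t_{v_i} \cdot \prod_{j=1}^{h} [\alpha_j, \beta_j]
\]
in $\Gamma_g^1$, with each $v_i$ non-separating in $\Sigma_g^1$, determines a genus-$g$ Lefschetz fibration over $\Sigma_h$ with only non-separating vanishing cycles together with a section of self-intersection $n$. So it suffices to exhibit, for each $m\ge 1$, such a factorization whose total Dehn-twist count $N=N(m)$ is strictly increasing in $m$.

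For the base case $m=1$, I would invoke the explicit positive factorizations over higher-genus bases constructed in \cite{BKM}. For any $g\ge 2$, $h\ge 1$ and $n\le 2h-2$, those relations combine a positive product of non-separating Dehn twists in $\Gamma_g^1$ with exactly $h$ commutators to yield a lift of the desired monodromy factorization with boundary element $t_\delta^{-n}$. The inequality $n\le 2h-2$ appears precisely as the adjunction/Euler-class obstruction for the existence of such a lift, so the hypothesis on $n$ is consumed entirely by producing the initial factorization.

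To go from $m$ to $m+1$, I would apply a local positive substitution of the form $w_1 \leadsto w_2$, where $w_1 = w_2$ is a relation between two positive words in non-separating Dehn twists in $\Gamma_g^1$ and $w_2$ has strictly more letters than $w_1$. Standard sources are the lantern relation and its daisy/chain-type generalizations; since $g\ge 2$, one can embed the supporting subsurfaces so that every curve involved is non-separating on $\Sigma_g^1$. Each application strictly increases $N$, preserves the non-separating property of every vanishing cycle, and leaves the lifted boundary element $t_\delta^{-n}$ unchanged; hence the section $S_n$ and its self-intersection persist. Iterating this substitution gives properties (1) and (2) for every $m$.

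For the signature statement when $g=2$, I would use the fact that $\Gamma_2$ coincides with its hyperelliptic subgroup, so Endo's formula for hyperelliptic Lefschetz fibrations applies: $\sigma(X(m))$ is a rational linear combination of the number of non-separating singular fibers and the number of separating ones of each type, plus a contribution from the commutator word $\prod[\alpha_j,\beta_j]$ expressible through Meyer's signature cocycle. Because our substitution leaves the commutator part untouched and adds only non-separating vanishing cycles, each step decreases $\sigma(X(m))$ by a fixed positive multiple of $\tfrac{3}{5}$, which forces $\sigma(X(m))\to -\infty$ strictly monotonically. The step I expect to be the main obstacle is neither the iteration nor the signature computation but the base case: constructing a positive factorization in $\Gamma_g^1$ that hits exactly $t_\delta^{-n}$, uses only non-separating Dehn twists, contains precisely $h$ commutators, and realizes the full range $n \leq 2h - 2$ --- once that input is in hand, the rest of the proof is a matter of bookkeeping with relations.
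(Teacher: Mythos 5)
Your proposal takes a genuinely different route from the paper, but it contains gaps that prevent it from working as stated.

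First, the mechanism is different in a way that matters. The paper does not iterate a positive substitution while holding the commutator part fixed; instead it imports from \cite{BKM} a family of relations $t_\delta^{2-2h} = C_1\cdots C_{h-1}\,C(m)\,T_1^m\,T_2^m$ (and $1 = C(m)\,T^m$ when $h=1$) in which the commutator $C(m)$ itself depends on $m$. This is unavoidable: as a group element, $T^m$ changes with $m$, so whatever single commutator equals its inverse must also change. Your scheme fixes $\prod_j[\alpha_j,\beta_j]$ once and for all, which forces every substitution to be a relation between two positive words representing the \emph{same} element of $\Gamma_g^1$. Producing such relations with the required properties is exactly the nontrivial part, and "inserting a lantern" does not supply one. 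The paper then realizes the full range $n\le 2h-2$ by multiplying both sides by $R^k$, where $R$ is the positive word with $R = t_\delta$ given by the one-boundary chain relation and $k=2h-2-n$; this is the device that handles your acknowledged "main obstacle," and you would need something equivalent.

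Second, the specific substitution you name fails precisely in the case you need it most. In $\Sigma_2$ the four boundary circles of an embedded four-holed sphere cannot all be essential (the Euler characteristics match, so the complement is a union of disks), hence the lantern relation does not embed in genus~$2$ with all curves non-separating. Your sentence "since $g\ge 2$, one can embed the supporting subsurfaces so that every curve involved is non-separating" is false for $g=2$. There is also an independent integrality obstruction that confirms this: your signature argument says each step decreases $\sigma$ by $\tfrac{3}{5}\cdot(\text{number of added non-separating twists})$. Since $\sigma$ is an integer, any substitution compatible with Endo's formula in the hyperelliptic ($g=2$) setting must add a multiple of $5$ non-separating twists; the lantern adds one. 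So the lantern cannot act on a genus-$2$ hyperelliptic factorization, and your stated decrement of "a fixed positive multiple of $\tfrac{3}{5}$" only makes sense if you replace the lantern by something like the two-holed torus chain relation $(t_a t_b)^6 = t_{\delta_1}t_{\delta_2}$ (which adds $10$ twists, giving $\Delta\sigma=-6$). Even then you would need the base factorization to already contain a suitable subword to substitute into, which is an additional unaddressed requirement. The paper's use of the explicit BKM family, with its $m$-dependent commutator and fixed positive blocks $T$ (resp.\ $T_1,T_2$) of length divisible by the needed amounts, avoids all of these issues at once.
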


\noindent For any section of a genus $g$ Lefschetz fibration over a genus $h \geq 1$ surface, its self-intersection number is determined by the number of critical points when $g=1$, and is bounded above by $2h-2$, when $g \geq 2$ and $h \geq 1$, as shown in \cite{BKM}. So the triples $(g,h,n)$ realized in the theorem above are all one can possibly get.

\begin{proof} 
We will construct the families of Lefschetz fibrations and sections prescribed in the statement using factorizations in the mapping class groups of surfaces. As outlined in Section~\ref{Prelim}, we need to obtain relations in $\Gamma_g^1$ of the form
\begin{eqnarray*} 
t_{\delta}^{-n} &=& \text{Product of $M(m)$ positive Dehn twists along non-separating curves} \\
& & \text{and of $h$ commutators} 
\end{eqnarray*}
where $n$ is the self-intersection of a section $S_n$ and $M(m)$ is a multivariable function depending on $g,h,n,m$, which is strictly increasing in $m > 0$. 

Let $g\geq 2$ and $h\geq 1$ be fixed integers. All the relations below should be understood to take place in $\Gamma_g^1$. Our key input is the following family of relations obtained in \cite{BKM} (See proof of Theorem~21; relations 12--20). See Figure~\ref{relationcurves} for the curves that appear below.

When $h=1$, the following relation holds for any positive integer $m$
\begin{eqnarray} \label{h=1}
t_\delta^{0} =1 = \, C(m) \, T^m \, ,
\end{eqnarray}
where $C(m)$ is a single commutator that depends on $m$, and 
\[ T= t_{c_2} t_{c_1} (t_{c_1} t_{c_2} t_{c_3})^2 t_{c_1} t_{c_2} \]
is a product of positive Dehn twists. (See relation~20 of \cite{BKM}; here we chose $l=2$.) Note that $c_2=b$ for $g=2$.

Whereas for $h>1$, for any positive integer $m$ we have
\begin{eqnarray} \label{h>1}
 t_\delta^{2-2h} &=& C_1 \cdots C_{h-1} \, C{(m)} T_1^m \, T_2^m \,    ,
\end{eqnarray}
where $C_1, \ldots, C_{h-1}$ are fixed commutators, $C(m)$ is a single commutator that depends on $m$. Here
\[ T_1 = t_{r} t_{a_1} t_{b} t_{r} ( t_{a_1}t_{r}t_{b})^2 \, \]
is a product of positive Dehn twists, and from 
\[ T_2 ( t_{c_1} t_{c_2} \cdots t_{c_{2g-3}})^{2g-2}= (t_{c_1} t_{c_2} \cdots t_{c_{2g-3}} t_{c_{2g-2}} t_{b})^{2g} \]
one obtains $T_2$ as a product of $8g-6$ positive Dehn twists. (See the paragraph following Equation~17 in \cite{BKM}.)

\begin{figure}[hbt]
 \begin{center}
     \includegraphics[width=14cm]{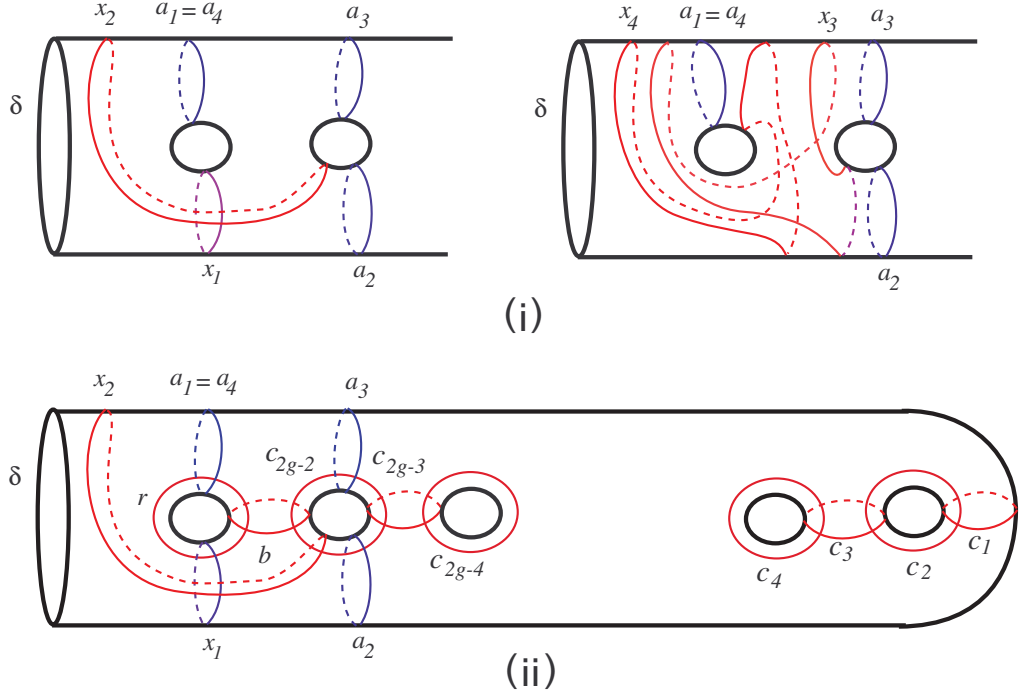}
     \caption{The curves of the mapping class group relations. When $g=2$, we have $b=c_{2g-1}$ and $r=c_{2g}$.}
     \label{relationcurves}
 \end{center}
\end{figure}

On the other hand, we have the (one boundary) chain relation 
\begin{eqnarray*} \label{chain1}
 t_{\delta} &=& (t_{c_1} t_{c_2} \cdots t_{c_{2g-3}} t_{c_{2g-2}} t_{b} t_r)^{4g+2} \, .
\end{eqnarray*}
Let $R$ denote the product of positive Dehn twists appearing on the right hand side of this relation, so it contains $8g^2+4g$ Dehn twists. We can multiply the two sides of the equations (\ref{h=1}) and (\ref{h>1}) by $t_{\delta}^k$ and $R^k$ to get

\begin{eqnarray} \label{h=1final}
t_\delta^{k} = \, C(m) \, T^m \, R^k \, , \text{when $h=1$, and}
\end{eqnarray}
\begin{eqnarray} \label{h>1final}
 t_\delta^{2-2h+k} &=& \, C_1 \cdots C_{h-1} \, C{(m)} \, T_1^m \, T_2^m \, R^k  \, , \text{when $h>1$.}
\end{eqnarray}
So both relations prescribe genus $g$ Lefschetz fibrations over genus $h$ surfaces with sections of self-intersection $n=2h-2-k$ and with only non-separating vanishing cycles.  

The number of Lefschetz critical points $M(m)=M_{g,h,n}(m)$ can be calculated as
\[ M(m) =\left\{ \begin{array}{ll}
10m+(8g^2+4g)k = 10m-(8g^2+4g)n & \mbox{for $h=1$} \\
(8g+4)m+(8g^2+4g)k= (8g+4)m+(8g^2+4g)(2h-2-n) & \mbox{for $h>1$} 
\end{array}
\right. \]
which is strictly increasing in $m$ for any $g, h, n$. Thus, the same holds for 
\[ 
\eu(X(m))= 4(g-1)(h-1) + M(m) \, .
\]

The signatures of the $4$-manifolds $X(m)=X_{g,h,n}(m)$ can be calculated from the explicit monodromy factorizations (\ref{h=1final}) and (\ref{h>1final}) above, by looking at their images under the boundary capping homomorphism $\Gamma_g^1 \to \Gamma_g$. We will carry out this calculation in a simpler case, when the fibration is hyperelliptic, in which case the following signature formula of Endo's \cite{Endo} comes handy:
\begin{equation*}
\sigma(X)=-\frac{g+1}{2g+1}N+
\sum_{j=1}^{[\frac{g}{2}]}(\frac{4j(g-j)}{2g+1} -1)s_j .
\end{equation*}
Here $X$ is the total space of the hyperelliptic fibration, $N$ and $s=\sum_{j=1}^{[\frac{g}{2}]}s_j$ are the numbers of nonseparating and separating vanishing cycles, respectively, whereas $s_j$ denotes the number of separating vanishing cycles which separate the surface into two subsurfaces of genera $j$ and $g-j$. 

When $g=2$, the mapping class group $\Gamma_2$ is hyperelliptic, and thus the genus two fibrations are guaranteed to be hyperelliptic. (Indeed, the reader can check that, in this case, all the curves on the closed surface isotope to curves which are symmetric under the hyperelliptic involution, whereas for $g>2$ they do not.) So we calculate the signature of $X(m)=X_{2,h,n}(m)$ as

\[ \sigma(X(m)) =\left\{ \begin{array}{ll}
-\frac{3}{5} (10m- 40 n) + 0 = -6m + 24 n & \mbox{for $h=1$} \\
-\frac{3}{5} (20m +40(2h-2-n)) + 0 = -12m - 24(2h-2-n) & \mbox{for $h>1$} 
\end{array}
\right. \]
which for any $h \geq 1$ is seen to be strictly decreasing in $m$.

\end{proof}

Now let $Y_{g,h,n}$ be the graph $3$-manifold described in Figure~\ref{graph}. The next theorem provides the promised families of contact $3$-manifolds and their Stein fillings.

\begin{figure}[hbt] 
 \begin{center}
      \includegraphics[width=10cm]{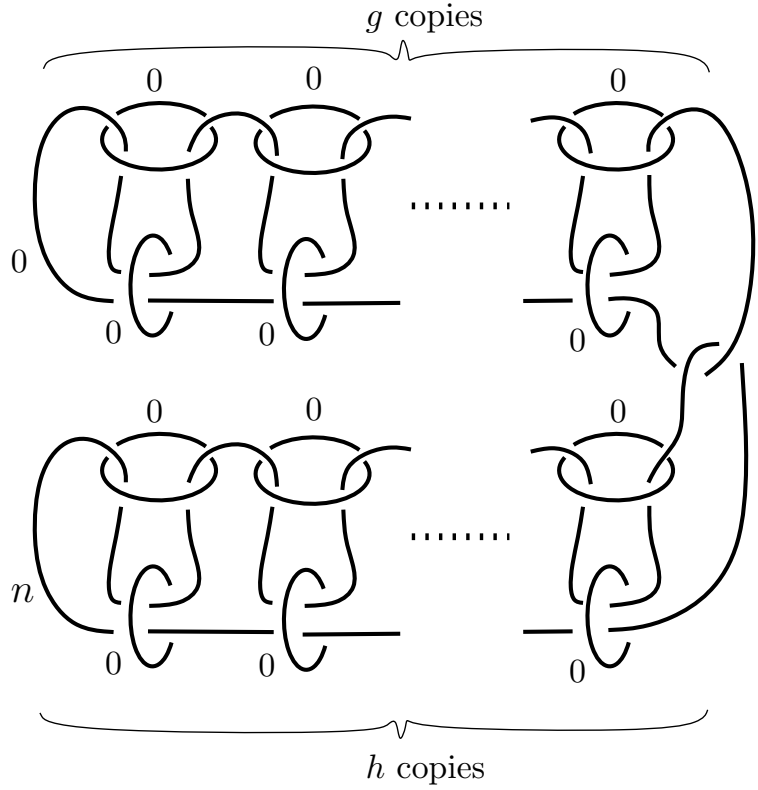}
      \caption{A surgery description of the graph manifold $Y_{g,h,n}$ as a plumbing of a circle bundle over $\Sigma_g$ with euler number $0$ and a circle bundle over $\Sigma_h$ with euler number $n$. The linking patterns are repeated $g$ times on the top and $h$ times on the bottom.}
  \end{center} \label{graph}
\end{figure} 

\begin{theorem}\label{Steinfillings}
Let $g\geq 2$, $h \geq 1$, and $n \leq 2h-2$ be fixed integers. Then $Y_{g,h,n}$ admits a contact structure $\xi_{g,h,n}$, which admits an infinite sequence of Stein fillings $(X(m), J(m))=(X_{g,h,n}(m), J_{g,h,n}(m))$, for $m=0,1, \ldots$, such that the euler characteristic of $X(m)$ is increasing in $m$. Moreover, when $g=2$, for any fixed $h \geq 1$ and $n$, the signature of $X(m)$ is decreasing in $m$.
\end{theorem}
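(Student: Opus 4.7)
The strategy is to feed the closed Lefschetz fibrations $(X(m),f(m))$ from Theorem~\ref{LFs} into the extraction machine of Corollary~\ref{LFextract}. Concretely, for each $m$ let $F \cong \Sigma_g$ be a regular fiber of $f(m)$ and $S_n = S_{g,h,n}(m) \cong \Sigma_h$ the prescribed section of self-intersection $n$; these are disjoint and can be made disjoint from the critical locus. Since Theorem~\ref{LFs} guarantees that every vanishing cycle of $f(m)$ is non-separating, removing open fibered tubular neighborhoods of $F$ and $S_n$ produces an allowable Lefschetz fibration over $\Sigma_h$ minus a disk, with regular fiber $\Sigma_g$ minus a disk, whose total space $X_0(m)$ admits a Stein structure $J(m)$ by Corollary~\ref{LFextract}.

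Next I would identify the boundary as $Y_{g,h,n}$. The boundary of $X_0(m)$ inherits the spinal open book structure from the Lefschetz fibration: the paper is the restriction of $f(m)$ to the complement of the disk in the base, hence a $(\Sigma_g \setminus D^2)$-bundle over $S^1$, and the spine splits into two pieces — the circle bundle coming from the trivial normal $D^2$-bundle of $F$ (Euler number $0$ over $\Sigma_g \setminus D^2$) and the circle bundle coming from the normal $D^2$-bundle of $S_n$ (Euler number $n$ over $\Sigma_h \setminus D^2$) — glued to the paper along the natural interface tori. This is exactly the plumbing description of $Y_{g,h,n}$ in Figure~\ref{graph}. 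Define $\xi_{g,h,n}$ to be the contact structure supported by this spinal open book (well-defined by Proposition~\ref{prop:welldefined}); by the ``moreover'' clause of Corollary~\ref{LFextract}, the Stein structures $J(m)$ all fill $\xi_{g,h,n}$, since the constructions differ only by the list of vanishing cycles, while $g$, $h$, $n$ (and hence the isomorphism class of the boundary spinal open book) are fixed.

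For the numerical bounds I would argue as follows. Write $X(m) = X_0(m) \cup \nu(F) \cup \nu(S_n)$ with the three pieces meeting along $3$-manifolds. Then
\[
\eu(X_0(m)) = \eu(X(m)) - \eu(F) - \eu(S_n) = 4(g-1)(h-1) + M(m) - (2-2g) - (2-2h),
\]
which is strictly increasing in $m$ because $M(m)$ is, by Theorem~\ref{LFs}. By Novikov additivity (the interfaces are $3$-manifolds),
\[
\sigma(X_0(m)) = \sigma(X(m)) - \sigma(\nu(F)) - \sigma(\nu(S_n)) = \sigma(X(m)) - 0 - \mathrm{sign}(n),
\]
since $\nu(F)$ and $\nu(S_n)$ are $D^2$-bundles over closed surfaces with intersection forms of rank one and self-intersection $0$ and $n$ respectively. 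Theorem~\ref{LFs} tells us that for $g=2$ and any fixed $h\geq 1$, $\sigma(X(m))$ is strictly decreasing in $m$, so the same holds for $\sigma(X_0(m))$. Finally, the sequence $\{X_0(m)\}$ consists of infinitely many distinct Stein fillings because their Euler characteristics form an unbounded strictly increasing sequence.

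\textbf{Main obstacle.} The core verification is the identification of the boundary spinal open book with the plumbing $Y_{g,h,n}$, together with checking that the induced contact structure really is independent of $m$. The former is bookkeeping about neighborhoods of a section and fiber in a Lefschetz fibration — which piece contributes which circle bundle, and with what Euler number — and the latter follows from the ``moreover'' part of Corollary~\ref{LFextract} once one observes that $f(m)$ and $f(m')$ agree near their chosen fiber and section and differ only in their monodromy factorizations in the interior.
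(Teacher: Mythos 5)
Your approach matches the paper's: extract an allowable Lefschetz fibration by removing fibered tubular neighborhoods of the fiber and the section, invoke Corollary~\ref{LFextract} and Proposition~\ref{prop:welldefined}, and then track Euler characteristics and signatures. However, there is a genuine error in your numerics. You assert that $F$ and $S_n$ are ``disjoint,'' but a section, by definition, meets each fiber in exactly one point; so $F \cap S_n$ is a single point, and $\nu(F) \cup \nu(S_n)$ is a \emph{plumbing}, not a disjoint union of tubular neighborhoods. This breaks both of your formulas. For the Euler characteristic, inclusion--exclusion over the plumbing gives $\eu(\nu(F) \cup \nu(S_n)) = (2-2g) + (2-2h) - 1 = 3-2(g+h)$, so $\eu(X_0(m)) = \eu(X(m)) - 3 + 2(g+h)$, one more than you wrote. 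More seriously, for the signature, the removed region is a $4$-manifold whose intersection form in the basis $[F],[S_n]$ is
\[
\begin{pmatrix} 0 & 1 \\ 1 & n \end{pmatrix},
\]
which has determinant $-1$ and hence signature $0$ for \emph{every} $n$. Novikov additivity therefore gives $\sigma(X_0(m)) = \sigma(X(m))$ exactly, with no $\mathrm{sign}(n)$ correction. Your monotonicity conclusion happens to survive (since $\mathrm{sign}(n)$ would be a constant), but the formula is wrong, and the error would propagate to, e.g., the $c_1^2$ computation in Corollary~\ref{infinitec1}. There is also a smaller confusion in your boundary description: you list the Euler-number-$0$ circle bundle over $\Sigma_g \setminus D^2$ as one of two spine components, but this region is precisely the $(\Sigma_g\setminus D^2)$-bundle over $S^1$ you already identified as the paper; by Corollary~\ref{LFextract}, the spine here has a single vertebra, $\Sigma_h \setminus D^2$.
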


\begin{proof}
From the above theorem, we have a family of Lefschetz fibrations 
\[(X(m), f(m))=(X_{g,h,n}(m), f_{g,h,n}(m))\]
with distinguished sections $S=S_{g,h,n}(m)$ of self-intersection $n$. Removing fibered neighborhoods of a regular fiber and the section $S$ of $(X(m), f(m))$ hands us an allowable Lefschetz fibration $(\check{X}(m), \check{f}(m))$ which induces a framed spinal book $\mathcal{B}_{g,h,n}$ on its boundary $Y_{g,h,n}$, which is fixed for any $m=0,1, \ldots$. By Proposition~\ref{prop:welldefined}, $Y_{g,h,n}$ admits a unique contact structure $\xi_{g,h,n}$ compatible with the spinal open book $\mathcal{B}_{g,h,n}$. On the other hand, by Corollary~\ref{LFextract}, $\check{X}(m)$ admits a Stein structure $J(m)$ filling the contact structure $\xi_{g,h,n}$ on $Y=Y_{g,h,n}$.

The euler characteristics and signatures of $X(m)$ and $\check{X}(m)$ are related by the formulae
\[ \eu(X(m))= \eu(\check{X}(m)) + 3-2(g+h) , \text{and} \]
\[ \sigma(X(m)) = \sigma(\check{X}(m)) + 0 \, , \]
where the latter follows from the Novikov additivity. Therefore we see that $\eu(\check{X}(m))$ is strictly increasing in $m$, and for $g=2$, the $\sigma(\check{X}(m))$ is strictly decreasing. This completes the proof.
\end{proof}

The proof of Theorem~\ref{mainthm} immediately follows:

\begin{proof}[Proof of Theorem~\ref{mainthm}]
For $g=2$, $h \geq 1$, and $n \leq 2h-2$, $(Y_{2,h,n}, \xi_{2,h,n})$ admits Stein fillings $(\check{X}(m),J(m))=(\check{X}_{2,h,n}(m), J_{2,h,n}(m))$ such that $\{ \eu(\check{X}(m)) \}$ is a strictly increasing sequence, and $\{\sigma(\check{X}(m))\}$ is a strictly decreasing sequence, for $m=0,1, \ldots$. So for any given pair of integers $E,S$, there exits a positive integer $P$ such that the infinite subsequence $\{(\check{X}(m), J(m))\}_{m \geq P}$ consists of members whose euler characteristics are greater than $E$ and signatures are smaller than $S$.
\end{proof}

\begin{remark}
We shall note that when discussing the signatures, we restricted ourselves to families with $g=2$ above for brevity. Otherwise, it is possible to see that the signature of $X_{g,h,n}(m)$ is decreasing in $m$ for any fixed $g > 2$, $h \geq 1$, and $n \leq 2h-2$ as well, which however requires a significantly more tedious calculation, since the fibrations we obtain in this case are not hyperelliptic.
\end{remark}

We also note that

\begin{corollary} \label{infinitec1}
There are infinite families of contact $3$-manifolds, where each contact $3$-manifold admits Stein filling with infinitely many different chern numbers $c_1^2$ and $c_2$. 
\end{corollary}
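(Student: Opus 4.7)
The plan is to derive the corollary directly from the family of Stein fillings already produced for Theorem~\ref{mainthm}. Fix $g=2$, $h \geq 1$ and $n \leq 2h-2$ and consider the sequence $(\check{X}(m), J(m))$ of Stein fillings of $(Y_{2,h,n}, \xi_{2,h,n})$ supplied by Theorem~\ref{Steinfillings}. Since $c_2$ of an almost complex $4$-manifold is its Euler characteristic and $\eu(\check{X}(m))$ is strictly increasing in $m$, infinitely many values of $c_2$ are realized across this single family of fillings of the fixed contact $3$-manifold, with no further work required.

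For $c_1^2$ I would invoke the standard principle that, once $c_1^2$ of a $4$-manifold with contact boundary is defined via any fixed relativization (e.g.\ Poincar\'e--Lefschetz duality paired with the trivialization of $\det TX$ along $\partial X$ coming from the contact structure), the quantity $c_1^2 - 2\eu - 3\sigma$ is a spectral invariant of the contact boundary alone. Consequently, for any two Stein fillings of the same $(Y,\xi)$ one has
\[
 c_1^2(\check{X}(m_1)) - c_1^2(\check{X}(m_2)) = 2\bigl(\eu(\check{X}(m_1)) - \eu(\check{X}(m_2))\bigr) + 3\bigl(\sigma(\check{X}(m_1)) - \sigma(\check{X}(m_2))\bigr).
\]
Plugging in the explicit counts of non-separating vanishing cycles and Endo's hyperelliptic signature formula from the proof of Theorem~\ref{LFs}, and using the identities $\eu(\check{X}(m)) = \eu(X(m)) + 2(g+h) - 3$ and $\sigma(\check{X}(m)) = \sigma(X(m))$ already derived there, one checks that $2\eu(\check{X}(m)) + 3\sigma(\check{X}(m))$ is an affine function of $m$ whose slope is $2$ when $h=1$ and $4$ when $h>1$. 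Both slopes are nonzero, so $c_1^2(\check{X}(m))$ takes infinitely many distinct values as well. Letting $(h,n)$ vary then yields the promised infinite family of contact $3$-manifolds.

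I expect the only mildly subtle step to be pinning down the definition of $c_1^2$ for a Stein domain, since the first Chern class $c_1(TX, J)$ naturally lives in absolute cohomology and must be relativized in order to be squared against the fundamental class. This is however standard bookkeeping in the symplectic filling literature, and any reasonable convention yields the cancellation displayed above; all the nontrivial combinatorial and topological content has already been absorbed into the construction of Theorem~\ref{LFs}.
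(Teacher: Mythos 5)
Your proposal is correct and follows essentially the same path as the paper, which also concludes by observing that $2\eu + 3\sigma$ is an affine function of $m$ with slope $2$ (for $h=1$) or $4$ (for $h>1$), while $c_2=\eu$ was already shown to be strictly increasing. Your remark about relativizing $c_1^2$ and then passing to differences between fillings of the fixed $(Y,\xi)$ is in fact a more careful treatment than the paper's, which simply writes $c_1^2 = 2\eu + 3\sigma$ for the bounded fillings without comment, but the underlying computation and conclusion are identical.
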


\begin{proof}
We calculate $c_1^2(X)= 2 \eu(X) + 3 \sigma(X)$ of the Stein fillings of $Y_{2,h,n}$ given in the proof of Theorem~\ref{Steinfillings} as $2m-8n$ for $h=1$ and $4m + 8 (2h-2-n)$ for $ h \geq 2$, which constitute an infinite family for varying $m \geq 0$. Since $c_2(X)=\eu(X)$, the latter claim is already proved above.
\end{proof}

\begin{remark}
Filling the fiber component in our allowable Lefschetz fibrations above with $n< 2-2h$, we get new $4$-manifolds whose boundaries are non-flat circle bundles over a closed surface $\Sigma_h$ of genus $h \geq 1$. However, in \cite{S}, Stipsicz showed that \textit{any} contact structure on a non-flat circle bundle over a surface $\Sigma_h$ admits at most finitely many Stein fillings, which implies that the cobordisms we get this way can never be Stein. 
\end{remark}


\subsection{Further constructions} \

We will now outline how to obtain similar families of contact structures on more general $3$-manifolds, admitting Stein fillings which have arbitrarily big euler characteristics and arbitrarily small signatures, 

\noindent \textbf{More general graph manifolds.}  We can generalize the above construction to many more graph manifolds, by removing more than one fiber and/or using Lefschetz fibrations with many disjoint sections, and following the same steps as above. The former is straightforward: We can simply remove fibered tubular neighborhoods of $k$ disjoint fibers for $k \geq 2$ to obtain more general graph manifolds that can be described by a surgery diagram similar to the one given in Figure~\ref{graph}, where we will instead have $k$ copies of the top part of the diagram, each one of which linking the bottom part once. Therefore, the same families of Lefschetz fibrations $X_{g,h,n}(m)$ in Theorem~\ref{LFs} can be employed to obtain arbitrarily big Stein fillings of the contact structures given by the spinal open books on these more general graph manifolds.

We can also consider graph manifolds which can be described by a surgery diagram similar to the one given in Figure~\ref{graph}, where this time we would have $l$ copies of the \emph{bottom} part of the diagram, each one of which linking the top part once. However, we are now in need a sequence of Lefschetz fibrations with increasing euler characteristics (and decreasing signatures) \textit{which have $l$ disjoint sections}. Such families can be deduced from the ones we presented in Theorem~\ref{LFs} as follows: Consider the family $(X_{g,1,0}(m), f_{g,1,0}(m))$, for any fixed $g\geq 2$. Each one of these Lefschetz fibrations has a section $S$ of self-intersection $0$. By taking $l$ disjoint push-offs of $S$, we get $l$ disjoint sections of this Lefschetz fibration. We can then take the fiber sum of $(X(m),f(m))=(X_{g,1,0}(m), f_{g,1,0}(m))$ with any genus $g$ Lefschetz fibration over the $2$-sphere with $l$ disjoint sections $S_1, \ldots, S_l$ of self-intersections $r_1, \ldots, r_l$, with only non-separating vanishing cycles. Possibly after an isotopy, we can patch the disjoint sections coming from both summands so as to get a new family of Lefschetz fibrations $(X'(m), f'(m))$ with $l$ disjoint sections $S'_1, \ldots, S'_l$ of self-intersections $r_1, \ldots, r_l$. As before, we see that the euler characteristic of $X'(m)$ is strictly increasing in $m$ (and its signature for $g=2$ is strictly decreasing). Hence, excising fibered neighborhoods of a regular fiber and these $l$ sections, we obtain the desired Stein fillings of the $3$-manifold on the boundary, equipped with the natural contact structure induced by the spinal book. It is worth noting that there are many examples of Lefschetz fibrations with disjoint sections of \emph{different} self-intersections. Thus we can obtain graph manifolds where the framings $r_1, \ldots, r_l$ on the $l$ copies mentioned above are not necessarily the same. Lastly, we can push for even more general families of graph manifolds by taking out more than one fiber in these Lefschetz fibrations as before.

\noindent \textbf{Non-graph manifolds.} It is also possible to generalize our constructions to the case of Stein fillable contact $3$-manifolds supported by spinal open books whose page monodromies are non-trivial --- which typically will hand us non-graph manifolds. 

Let $f: X \to \Sigma$ be a (not necessarily allowable) Lefschetz fibration with regular fiber $F$ and base $\Sigma$ compact surfaces with non-empty boundary and $Y = \partial X$. Similar to the description of a standard open book, $f|_{Y}$ gives a spinal open book $\mathcal{B}$. The paper of this spinal open book is vertical boundary of $X$, $\mathcal{P} = f^{-1} (\partial D)$. The spine is the complementary region $Y \backslash \mathcal{P}$, and is the horizontal boundary of $X$. The Lefschetz fibration then equips $\mathcal{S}$ with the structure of a circle bundle with fibers consisting of the boundaries of all fibers of $f$, $\partial F$, as a bundle over the base $\Sigma$. If $\partial F$ is disconnected, then the vertebra $\whs$ consists of $\#|\partial F|$ copies of $\Sigma$. As the boundary of a Lefschetz fibration, $\mathcal{B}$ is a symmetric, uniform, simple spinal open book: every fiber is isotopic within $X$ and so every component of the fiber $\whf$ of $\mathcal{B}$ is isomorphic. The spine consists of the bundle of the disconnected union of circle boundaries of the fibers, and as such circles are isotopic within each horizontal boundary to the boundary of a single fiber, there is a single boundary component of each component of the total fiber $\whf$ which gets glued to a component of $\whs$.

For a standard open book $\mathcal{B}$, there exists a Lefschetz fibration with boundary $\mathcal{B}$ if we can find a factorization of the monodromy of $\mathcal{B}$ into positive Dehn twists. Any such factorization gives a Lefschetz fibration filling of $\mathcal{B}$. For spinal open books, the picture is slightly more complicated:

Given a spinal open book $\ds \mathcal{B} = (Y, \whf, \widehat{\phi}, \whs, g)$, there exists a Lefschetz fibration with boundary $\mathcal{B}$ if we can find identifications $\hat{i}: \whf \to F$ and a factorization of the \emph{total monodromy} $\Phi_{\hat{i}} = \phi_1^{i_1} \circ \cdots \circ \phi_n^{i_n}$ (where $\phi^i = i \circ \phi \circ i^{-1}$) as 
$$\Phi_{\hat{i}} =  \prod_{i = 1}^m t_i \prod_{j = 1}^h [\alpha_j, \beta_j]$$
in the mapping class group of $F$, 
where $h$ is the genus of $\Sigma$, $a_j$,$b_j$,  are isotopy classes of diffeomorphisms of $F$ for $j = 1, \dots, h$, and $t_i$, $i =1, \dots, m$ are Dehn twists on $F$. In particular, such a factorization corresponds to the monodromy presentation of the bundle of non-singular fibers in a Lefschetz fibration with this boundary. 

Given any mapping class element $\Phi$, we define the \emph{positive coset commutator length} of $\Phi$ to be the smallest $h$ so that we can write $\Phi$ as a the product of a length $h$ commutator and positive Dehn twists as above. 


\begin{theorem} Let $\mathcal{B}$ be a symmetric, uniform, simple spinal open book with page $F$ of genus greater than two, spine $\Sigma$ of genus $h$. If there is a set of identifications $\hat{i}$ so that the positive coset commutator length of the total monodromy $\Phi_{\hat{i}}$ is strictly less than $h$, then $\xi_\mathcal{B}$ admits Stein fillings of arbitrarily large euler characteristic.
\end{theorem}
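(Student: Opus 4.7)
The strategy mirrors the proof of Theorem~\ref{LFs}: use a one-commutator relation of Baykur--Korkmaz--Monden to inject arbitrarily many positive Dehn twists into the given factorization of $\Phi_{\hat{i}}$, and then harvest the resulting allowable Lefschetz fibration fillings of $\mathcal{B}$ as Stein fillings via Theorem~\ref{thm:steinfilling}.

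Fix an identification $\hat{i}$ realizing the positive coset commutator length and write
$$\Phi_{\hat{i}} = \prod_{i=1}^{N} t_i \cdot \prod_{j=1}^{h'} [\alpha_j, \beta_j],$$
with $h' \leq h-1$ and positive Dehn twists $t_i$ along homologically non-trivial curves of $F$. Since $F$ has genus strictly greater than two, an embedding of a genus-two subsurface with one boundary component $\Sigma_2^1 \hookrightarrow F$ induces an inclusion of mapping class groups $\Gamma_2^1 \hookrightarrow \Gamma(F)$ under which the BKM relation $1 = C(m)\,T^m$ recalled in the proof of Theorem~\ref{LFs} (case $g=2$, $h=1$) remains valid. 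Here $C(m)$ is a single commutator and $T^m$ is the product of $10m$ positive Dehn twists along curves non-separating in the subsurface; since a curve non-separating in a subsurface with connected complement remains non-separating in the ambient surface, each such Dehn twist is along a non-separating (hence homologically non-trivial) curve in $F$.

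Right-multiplying $\Phi_{\hat{i}}$ by the trivial word $C(m)\,T^m$ and invoking the conjugation identities $\phi\, t_c\, \phi^{-1} = t_{\phi(c)}$ and $\phi\,[a,b]\,\phi^{-1} = [\phi a \phi^{-1}, \phi b \phi^{-1}]$ to push positive Dehn twists to the left and single commutators to the right, one obtains
$$\Phi_{\hat{i}} \;=\; \left(\prod_{i=1}^N t_i \cdot \widetilde{T}^m\right) \cdot \prod_{j=1}^{h'}[\alpha_j, \beta_j] \cdot \widetilde{C}(m) \cdot \prod_{k=1}^{h-h'-1}[\mathrm{id},\mathrm{id}],$$
a factorization with $N + 10m$ positive Dehn twists along homologically non-trivial curves and exactly $h$ commutators, the last $h-h'-1$ of which are trivial padding. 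By the correspondence between such factorizations and Lefschetz fibrations recalled in the excerpt, this prescribes an allowable Lefschetz fibration $f(m)\colon X(m)\to\Sigma$ with fiber $F$, base of genus $h$, and induced spinal open book $\mathcal{B}$ on $\partial X(m) = Y$. Applying Theorem~\ref{thm:steinfilling} endows $X(m)$ with a Stein structure filling $(Y,\xi_{\mathcal{B}})$, and
$$\chi(X(m)) = \chi(F)\,\chi(\mathrm{base}) + N + 10m \longrightarrow \infty \text{ as } m \to \infty.$$

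The main bookkeeping is the rearrangement step; the two conjugation identities make it formal, preserving both positivity of Dehn twists and the single-commutator form. The remaining ingredients are that trivial commutators may be inserted without altering $\Phi_{\hat{i}}$, that the BKM relation embeds into $\Gamma(F)$ along the genus-two subsurface with its Dehn twists remaining non-separating in $F$, and that allowability is preserved at every step. Together these yield the promised infinite family of Stein fillings of $(Y,\xi_{\mathcal{B}})$ of unbounded Euler characteristic.
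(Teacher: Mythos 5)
Your proof is correct and follows the paper's own strategy exactly: since the positive coset commutator length is strictly less than $h$, one commutator in the factorization can be taken to be $[\mathrm{id},\mathrm{id}]$, and one then substitutes in the BKM single-commutator relation $1=C(m)T^m$ to inject $10m$ positive non-separating Dehn twists, producing allowable Lefschetz fibrations of unbounded Euler characteristic that fill $\mathcal{B}$ and hence, by Theorem~\ref{thm:steinfilling}, arbitrarily large Stein fillings of $\xi_\mathcal{B}$. The only difference is that you spell out the subsurface-embedding and conjugation bookkeeping that the paper leaves implicit.
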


\begin{proof} If there is such a total monodromy $\Phi_{\hat{i}}$ with commutator length strictly less then $h$, then in the monodromy presentation of the associated Lefschetz fibration, we can choose a single commutator to be that of the identity maps. We can extend this factorization to new Lefschetz fibrations by making a monodromy substitution using the relations given in (2) above so as to produce arbitrarily big allowable Lefschetz fibrations filling $\mathcal{B}$ as before. Thus, any contact $3$-manifold satisfying these properties will admit arbitrarily big Stein fillings.
\end{proof}

\subsection{Final remarks} \

We finish with a couple of intriguing questions that arise in our work:

\noindent \textbf{Positive factorizations.} It follows from the Theorem~\ref{PALF} that the Stein fillings $(X(m), J(m))=(X_{g,h,n}(m), J_{g,h,n}(m))$ of $(Y, \xi)=(Y_{g,h,h}, \xi_{g,h,n})$ we constructed in the previous section admit allowable Lefschetz fibrations \textit{over the $2$-disk} whose boundary open books support $(Y, \xi)$. In turn, these prescribe positive open books supporting the same contact structure $(Y, \xi)$. Given that the euler characteristic of $X(m)$ grows along with the number of Lefschetz critical points as $m$ grows, understanding how the induced positive factorizations are correlated is an intriguing task which we address in a future work. There are various ways to see that these positive open books will be non-planar. In fact, the following observation by John Etnyre and Amey Kaloti presents a nice contrast: For any closed contact $3$-manifold $(Y, \xi)$ supported by a planar open book, there is a bound on the possible euler characteristics and signatures realized by minimal symplectic fillings of it. (See \cite{Ka}.) In other words, the conjecture of Stipsicz and Ozbagci holds in this case.

\noindent \textbf{Underlying geometries.} A curious point that arises in our work is as follows: There are Stein fillable contact $3$-manifolds which admit (1) a unique Stein filling, (2) more than one but finitely many Stein fillings, and (3) infinitely many Stein fillings, up to diffeomorphisms. That is, there is an intrinsic property one can associate to Stein fillable contact $3$-manifolds in terms of the number of Stein fillings they admit. There are examples of contact $3$-manifolds which carry only one of these properties. For example, Gromov \cite{Gromov} proved that there is a unique minimal symplectic filling of $S^3$, whereas McDuff \cite{McDuff} proved that there are exactly two minimal fillings of the standard contact structure on $L(4,1)$. Ozbagci and Stipsicz in \cite{OS} showed that the Seifert manifolds with a single Seifert fiber of order 2 and with base a surface of genus $g\geq2$ admit a contact structure with infinitely many Stein fillings; also see \cite{AEMS}. In this article, we have shown that there is a fourth class of Stein fillable contact $3$-manifolds, namely those which admit (4) infinitely many Stein fillings with arbitrarily large euler characteristics. It is therefore worth asking whether or not there are Stein fillable contact $3$-manifold which belong to the class (3) but not (4). To the authors of this article, this question seems to have a strong tie with the underlying geometry of the $3$-manifold: All the earlier examples of Stein fillable contact $3$-manifolds that belong to class (3) are Seifert fibered ones, whereas the class (4) examples we produce are non-geometric.

\vspace{0.2in}
\subsection*{Acknowledgments} The first author was partially supported by the NSF grant DMS-0906912. The authors would like to thank the organizers of the 2012 Georgia Topology Conference for the wonderfully stimulating atmosphere they created, where this work was shaped. We thank John Etnyre, Burak Ozbagci, and Chris Wendl for their comments on the first version of this paper. We would also like to thank John Etnyre and Amey Kaloti for pointing out their observation on planar open books, and Sam Lisi and Chris Wendl for the appendix they have written for our article.

\vspace{0.3in}


\refstepcounter{secnumdepth}
\refstepcounter{section}

\section*{Appendix (by Samuel Lisi and Chris Wendl): Stein structures on Lefschetz
fibrations and their contact boundaries}

\renewcommand{\thesection}{A}

In this appendix we explain a special case of a theorem 
from \cite{LVW} which implies that an allowable
Lefschetz fibration over an arbitrary oriented surface with boundary can always
be viewed in a canonical way as a Stein filling of a contact structure determined 
by the spinal open book at the boundary (cf.~Theorem~\ref{thm2}).
Our proof is a variation on the technique of Thurston
\cite{Thurston} and Gompf \cite{GS} for constructing
symplectic structures on Lefschetz fibrations.

Let $E$ be a smooth, compact, oriented and
connected $4$-manifold with boundary and corners such that $\p E$ is the
union of two smooth faces
$$
\p E = \p_h E \cup \p_v E
$$
intersecting at a $2$-dimensional corner.  Let $\Sigma$ denote a compact,
oriented and connected surface with nonempty boundary.  We consider
a Lefschetz fibration $\Pi \colon E \to \Sigma$ with the following properties:
\begin{enumerate}
\item The sets of critical points $E\crit$ and critical values
$\Sigma\crit$ lie in the interiors of $E$ and $\Sigma$ respectively,
\item $\Pi^{-1}(\p\Sigma) = \p_v E$ and $\Pi|_{\p_v E} \colon \p_v E \to \p\Sigma$
is a smooth fiber bundle,
\item $\Pi|_{\p_h E} \colon \p_h E \to \Sigma$ is also a smooth fiber bundle,
\item All fibers $E_z := \Pi^{-1}(z)$ for $z \in \Sigma$ are connected and have
nonempty boundary in $\p_h E$.
\end{enumerate}
As we will review in \S\ref{sec:appendixStein} below, 
any Lefschetz fibration of this type induces a spinal open book at its boundary.
We say that $\Pi$ is \defin{allowable} if all the irreducible components of 
its fibers have nonempty boundary, i.e. none of its vanishing cycles are
homologically trivial.

\begin{theorem}
\label{thm:LefschetzStein}
If the Lefschetz fibration $\Pi \colon E \to \Sigma$ is allowable, then after
smoothing corners on $\p E$, $E$ admits (canonically up to 
Stein homotopy) the structure
of a Stein domain, and the filled contact structure at the boundary
is uniquely determined up to isotopy by the induced spinal open book.
\end{theorem}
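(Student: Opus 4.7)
The plan is to mimic the Thurston--Gompf construction of symplectic forms on Lefschetz fibrations, but working throughout with exact forms and plurisubharmonic functions so as to obtain a Stein structure rather than just a symplectic one. The output will be an exact symplectic form $\omega = d\lambda$ on $E$, a compatible almost complex structure $J$, and a plurisubharmonic function $\phi$ whose Liouville vector field $\nu$ (with $\iota_\nu \omega = \lambda$) points outward along a smoothing of $\partial E$.

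For the main construction, I would build $\lambda$ as a sum of a \emph{vertical} and a \emph{horizontal} piece. Since $\Pi$ is allowable, every fiber $E_z$ is a compact surface whose irreducible components all have non-empty boundary (in $\partial_h E$), so each fiber admits a Liouville form with Liouville field pointing outward on $\partial E_z$; smoothing in families and using the standard holomorphic Lefschetz model $\Pi(z_1,z_2)=z_1 z_2$ together with the plurisubharmonic model $|z_1|^2+|z_2|^2$ near the finitely many critical points produces a one-form $\lambda_{\rm vert}$ on $E$ whose restriction to each smooth fiber is Liouville. Independently, since $\Sigma$ is a surface with non-empty boundary, it admits a Liouville form $\lambda_\Sigma$ whose Liouville field points outward on $\partial \Sigma$. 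Setting
\[
\lambda = \lambda_{\rm vert} + K\, \Pi^* \lambda_\Sigma
\]
for sufficiently large $K>0$, the standard Thurston--Gompf argument makes $d\lambda$ non-degenerate: $d\lambda_{\rm vert}$ handles the vertical directions and is a fiber area form, while the large multiple of $\Pi^*d\lambda_\Sigma$ dominates in horizontal directions away from the Lefschetz critical set, where the local Stein model gives non-degeneracy directly. Choosing a compatible $J$ (fiberwise K\"ahler, projecting to the complex structure on $\Sigma$ away from critical points), the resulting function $\phi$ is plurisubharmonic and has only index $\le 2$ critical points, one for each fiber critical point, base critical point, or Lefschetz singularity.

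The main technical obstacle will be verifying the compatibility with the spinal open book at the boundary. The Liouville field $\nu$ is, modulo small vertical corrections, the horizontal lift of the outward Liouville field of $\lambda_\Sigma$ along $\partial_v E$ and the fiberwise outward Liouville field along $\partial_h E$; both components point outward in the obvious sense, and a standard corner-smoothing argument gives that $\nu$ is transverse to the smoothed boundary. The induced contact form $\alpha = \lambda|_{\partial E}$ then has Reeb field positively transverse to the pages $E_z \subset \partial_v E$ (the paper $\mathcal{P}$), because $d\lambda_{\rm vert}|_{E_z}$ is a positive area form, and positively transverse to sections of the circle bundle $\partial_h E$ (the spine $\mathcal{S}$), because the large horizontal contribution $K\Pi^* d\lambda_\Sigma$ dominates there. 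By Propositions~\ref{prop:uniqueframed} and~\ref{prop:welldefined}, this pins $\xi = \ker\alpha$ down up to isotopy in terms of the spinal open book alone.

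For canonicity, the space of inputs to the construction---the family of fiberwise Liouville forms agreeing with a collar model near $\partial_h E$, the Liouville form $\lambda_\Sigma$, the constant $K$, and the horizontal distribution---is convex, hence contractible. Any continuous path of choices yields a continuous family of Stein structures on $E$, which is by definition a Stein homotopy; Gray stability then gives an isotopy of the contact structures on the boundary. Finally, since two allowable Lefschetz fibrations filling the same spinal open book can be arranged to share a collar neighborhood of the boundary in which the Liouville data agree up to isotopy, the induced boundary contact structures must themselves be isotopic, completing the proof.
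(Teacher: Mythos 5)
Your proposal follows the same high-level strategy as the paper's appendix: the Thurston--Gompf trick, adapted to the exact/plurisubharmonic setting, with a convexity argument to obtain canonicity. However, there is a real gap in the logical order of the construction: you construct a one-form $\lambda = \lambda_{\rm vert} + K\,\Pi^*\lambda_\Sigma$ and then assert ``the resulting function $\phi$ is plurisubharmonic'' without ever producing $\phi$. Building a fiberwise Liouville form and then choosing a compatible $J$ does not, by itself, exhibit $\lambda$ as $-d\phi\circ J$ for any function; the Thurston trick applied to a Liouville form alone yields a Liouville domain, not a Stein one.

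The paper resolves this by reversing the order of choices: first fix an almost-complex structure $J$ from the contractible space $\jJ(\Pi)$ (making $\Pi$ pseudoholomorphic, integrable near $E^{\crit}$, and preserving an $S^1$-invariant complex subbundle along $\p_h E$), and only then construct a fiberwise $J$-convex \emph{function}. This is done by choosing $J$-convex functions $f_z$ on individual fibers (with prescribed value and outward normal derivative at $\p E_z$, using the holomorphic Morse model near $E^{\crit}$), extending to neighborhoods in the base, and patching with a partition of unity: $f=\sum_{z\in I}(\rho_z\circ\Pi)f_z$. The Thurston step then adds $K(\varphi\circ\Pi)$ for a $j$-convex $\varphi$ on $\Sigma$, and since $\Pi$ is $(J,j)$-holomorphic the resulting $F_K = f + K(\varphi\circ\Pi)$ has $-dF_K\circ J = \lambda + K\,\Pi^*\sigma$, recovering exactly the form of your $\lambda$ --- but now tied to a function. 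The output is a Weinstein structure, and one then cites Eliashberg's theorem (\cite{CiEl}) to upgrade to Stein. Your contractibility argument, ``the space of inputs is convex,'' is sound once reformulated in terms of the contractibility of $\jJ(\Pi)$ together with the convexity of the space of admissibly fiberwise $J$-convex functions; as written, it is not clear what parameter space you are contracting. You should also note that the boundary compatibility needs the Giroux form version of compatibility (Reeb tangent to the $S^1$-fibers of the spine, as in the appendix's Theorem~\ref{thm:GirouxForms}), rather than loosely appealing to the two propositions from the main body.
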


In the background of
this theorem is the corresponding existence and uniqueness result 
(also a special case of a theorem in \cite{LVW}) 
for contact structures supported by spinal open books.  We shall state and
prove this in \S\ref{sec:appendixSpinal}, and then prove
Theorem~\ref{thm:LefschetzStein} in \S\ref{sec:appendixStein}.

\begin{remark}
A version of Theorem~\ref{thm:LefschetzStein} also holds without the
allowability assumption, but in that case $E$ generally becomes a strong
symplectic filling instead of a Stein filling.  See \cite{LVW}
for details.
\end{remark}

\subsection*{Acknowledgments}
We thank Kai Cieliebak for helpful conversations.
SL was partially supported by the ERC Starting Grant of Fr\'ed\'eric Bourgeois
StG-239781-ContactMath.  CW was partially supported by a Royal Society
University Research Fellowship.

\subsection{Spinal open books and contact structures}
\label{sec:appendixSpinal}

To establish notation,
we begin by reviewing some essential definitions (cf.~Section~\ref{Spinal}).

\begin{defn}
\label{defn:spinal}
A \defin{spinal open book decomposition} on a closed oriented
$3$-manifold~$M$ is a decomposition $M = M\spine \cup M\paper$, where the 
pieces $M\spine$ and $M\paper$ (called the \defin{spine} and \defin{paper} 
respectively)  are smooth compact $3$-dimensional submanifolds
with disjoint interiors such that $\p M\spine = \p M\paper$,
carrying the following additional structure:
\begin{enumerate}
\item A smooth fiber bundle $\pi\spine \colon M\spine \to \Sigma$ with fiber $S^1$,
such that each fiber is either disjoint from $\p M\spine$ or contained in it.
Here, $\Sigma$ is a compact oriented surface whose connected components 
(called \defin{vertebrae}) all have nonempty boundary.
\item A smooth fiber bundle $\pi\paper \colon M\paper \to S^1$ such that the
connected components (called \defin{pages}) of fibers are all compact
surfaces with nonempty boundary, where they meet $\p M\paper$ transversely.
Moreover, the boundary components of each page are fibers of~$\pi\spine$.
\end{enumerate}
\end{defn}

We shall denote by
$$
\boldsymbol{\pi} := \Big(\pi\spine \colon M\spine \to \Sigma,
\pi\paper \colon M\paper \to S^1\Big)
$$
the collection of information encoded in a spinal open book.
We will say additionally that $\boldsymbol{\pi}$ admits
a \defin{smooth overlap} if the fibration $\pi\paper \colon M\paper \to S^1$
can be extended over an open neighborhood $M\paper' \subset M$ containing
$M\paper$ such that all fibers of $\pi\spine$ intersecting $M\paper'$ are
contained in fibers of the extended~$\pi\paper$.
Note that while an arbitrary spinal open book does not always admit a smooth
overlap, it can always be deformed continuously to one that does, and the
result is unique up to smooth isotopy.  

\begin{defn}
\label{defn:supported}
Given a spinal open book $\boldsymbol{\pi}$ on~$M$,
a positive contact form $\alpha$ on~$M$ will be called a
\defin{Giroux form} for $\boldsymbol{\pi}$ if the following conditions
hold:
\begin{enumerate}
\item The 2-form $d\alpha$ is positive on the interior of every page.
\item The Reeb vector field $R_\alpha$ determined by $\alpha$
is positively tangent to every
oriented fiber of $\pi\spine \colon M\spine \to \Sigma$.
\end{enumerate}
A contact structure $\xi$ on~$M$ is \defin{supported} by 
$\boldsymbol{\pi}$ whenever it admits a contact form which
is a Giroux form.
\end{defn}

\begin{theorem}
\label{thm:GirouxForms}
If $\boldsymbol{\pi}$ is a spinal open book on~$M$ which admits a smooth
overlap, then the
space of Giroux forms for $\boldsymbol{\pi}$ is nonempty and contractible.
In particular, any isotopy class of spinal open books gives rise to a
canonical isotopy class of supported contact structures.
\end{theorem}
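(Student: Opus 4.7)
The plan is to adapt the Thurston--Winkelnkemper and Gompf constructions of compatible contact forms to the spinal setting. I would first establish existence by building a Giroux form piece by piece on the two halves of $M = M\spine \cup M\paper$, then show that the space of Giroux forms is essentially convex so that contractibility follows, and finally extract the canonical isotopy class of contact structures via Gray's stability theorem applied to smooth paths within this space.

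For existence, I exploit the smooth overlap assumption. Since $\pi\spine\co M\spine \to \Sigma$ is an $S^1$-bundle over a surface with non-empty boundary it is trivializable, so I fix a trivialization with fiber coordinate $\theta$ together with a positive area form $\omega$ on $\Sigma$ and (using $\p\Sigma\neq\emptyset$) a primitive $\lambda$ with $d\lambda = \omega$. The $1$-form $\alpha\spine := d\theta + \pi\spine^*\lambda$ satisfies $\alpha\spine(\p_\theta)=1>0$ and $\iota_{\p_\theta}d\alpha\spine = 0$, which forces the Reeb field of $\alpha\spine$ to be positively tangent to every fiber of $\pi\spine$. On the paper $M\paper$, I apply a Thurston--Winkelnkemper construction: choose on each page a primitive $1$-form whose differential is a positive area form, arranged to match $\alpha\spine$ near the interface tori, and combine with a positive volume on the base $S^1$ of $\pi\paper$ to obtain $\alpha\paper$ with $d\alpha\paper$ positive on pages. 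In the overlap neighborhood afforded by the smooth-overlap hypothesis, both forms are simultaneously defined, so after cutting off and rescaling, $\alpha := \alpha\spine + K\alpha\paper$ with $K$ large becomes a global contact form satisfying both Giroux conditions.

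For contractibility, I would observe that both defining conditions of a Giroux form are linear in $\alpha$: condition (1) that $d\alpha$ is positive on every page is obviously so, while condition (2) is equivalent on $M\spine$ to the pair of linear conditions $\alpha(\p_\theta)>0$ and $\iota_{\p_\theta}d\alpha = 0$. Hence these conditions are preserved under convex combinations. The crux is then to verify that such a convex combination $\alpha_t = (1-t)\alpha_0 + t\alpha_1$ of two Giroux forms remains a \emph{contact} form. On the spine, condition (2) forces the local normal form $\alpha_i = f_i\,d\theta + \pi\spine^*\beta_i$ with $f_i>0$ and $\beta_i$ depending only on $\Sigma$-coordinates, so $\alpha_i\wedge d\alpha_i>0$ reduces to $d\beta_i > 0$ on $\Sigma$; both the $f_i>0$ and $d\beta_i>0$ conditions are convex, so $\alpha_t$ is contact on $M\spine$. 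On $M\paper$, I would first use the smooth overlap to isotope both $\alpha_0$ and $\alpha_1$ so that they agree with the above spine normal form on a uniform collar of the interface tori; away from this collar, $d\alpha_t>0$ on pages combined with positivity of $\alpha_t$ on a fixed page-transverse vector field (preserved under convex combinations) ensures $\alpha_t\wedge d\alpha_t>0$.

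The isotopy-class statement then follows from Gray's stability theorem applied to a path of Giroux forms: the induced family $\xi_t = \ker\alpha_t$ of contact structures is realized by an ambient isotopy of $M$, so path-connectedness of the space of Giroux forms yields a canonical isotopy class of supported contact structures, and isotopies of the spinal open book itself lift to families of Giroux forms in the same way. The main obstacle I anticipate is the contactness verification for the convex combination on $M\paper$ near the interface tori; the smooth-overlap hypothesis is precisely what allows the spine normal-form analysis to be propagated slightly into the paper, after which the convex-combination argument closes.
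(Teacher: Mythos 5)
Your existence sketch is broadly in the spirit of the paper's construction, which first builds a ``fiberwise Giroux form'' and then applies a Thurston-trick stabilization; though the formula $\alpha\spine + K\alpha\paper$ is not quite well-posed as stated (the two pieces live on overlapping but unequal domains), and in the paper it is the base/horizontal Liouville contribution $\beta_\delta$ that carries the large constant $K$, not the paper form. Those issues are likely repairable. The serious gap is in your contractibility argument. You correctly note that the two defining conditions of a Giroux form---positivity of $d\alpha$ on pages, and positive tangency of $R_\alpha$ to $S^1$-fibers, the latter equivalent to $\alpha(\p_\theta)>0$ together with $\iota_{\p_\theta} d\alpha = 0$---are linear, hence convex. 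But the \emph{contact} condition is not convex, and your argument that it is preserved under convex combinations breaks down. On the spine, writing $\alpha = f\,d\theta + \pi\spine^*\beta$ with $f,\beta$ pulled back from $\Sigma$, one computes
$\alpha\wedge d\alpha = d\theta\wedge \pi\spine^*\bigl(f\,d\beta - df\wedge\beta\bigr)$,
so the contact condition is $f\,d\beta - df\wedge\beta > 0$, not $d\beta>0$ as you assert (these agree only when $f$ is constant). Crucially this expression is quadratic in $(f,\beta)$: for $\alpha_t = (1-t)\alpha_0 + t\alpha_1$ one picks up a cross-term $f_0\,d\beta_1 + f_1\,d\beta_0 - df_0\wedge\beta_1 - df_1\wedge\beta_0$ with no a priori sign. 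The same difficulty appears on the paper side, where $\alpha_t\wedge d\alpha_t$ contains the uncontrolled cross-term $\alpha_0\wedge d\alpha_1 + \alpha_1\wedge d\alpha_0$. So one cannot conclude that the space of Giroux forms is convex, and the argument collapses.

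The paper avoids this by never asserting convexity of the space of Giroux forms. Instead it introduces the larger space of \emph{fiberwise Giroux forms}---dropping the contact condition, keeping only the genuinely linear conditions---which \emph{is} convex. Lemma~\ref{lemma:fiberwiseGiroux} (the Thurston trick) then shows that for any fiberwise Giroux form $\alpha$ one obtains a Giroux form $\alpha + K\beta_\delta$ for suitable $\delta$ and large $K$, and moreover that one can take $K_0 = 0$ if $\alpha$ is already a Giroux form. This last observation is the key to contractibility: given an $S^n$-family of Giroux forms, one contracts it linearly inside the convex space of fiberwise Giroux forms, stabilizes the whole resulting disk-family by adding $K\beta_\delta$ with $K$ large (uniformly, by compactness), and then connects the stabilized family back to the original one along the segments $s\mapsto \alpha + sK\beta_\delta$, which stay in the space of Giroux forms precisely because $K_0 = 0$ when $\alpha$ is Giroux. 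Whitehead's theorem then finishes. Your proof needs some version of this stabilizing mechanism; the direct convexity route does not work.
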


\begin{remark}
One can also formulate the above definitions and prove a generalization of
Theorem~\ref{thm:GirouxForms} for compact manifolds with boundary,
which allows for a useful alternative characterization of certain ``local''
filling obstructions such as Giroux torsion and planar torsion,
see~\cite{LVW} for details.
\end{remark}

The proof of Theorem~\ref{thm:GirouxForms}
will occupy the remainder of this subsection.
As a first step, we define a \defin{fiberwise Giroux form} for 
$\boldsymbol{\pi}$ to be any smooth $1$-form $\alpha$ on~$M$ for which the 
following conditions hold:
\begin{itemize}
\item $d\alpha$ is positive on the interior of every page,
\item $\alpha$ is positive on the fibers of $\pi\spine \colon M\spine \to \Sigma$,
and the tangent spaces to these fibers are contained in $\ker d\alpha$.
\end{itemize}
A fiberwise Giroux form is a Giroux form if and only if it is contact,
but since we have not required the latter in the above definition,
the space of fiberwise Giroux forms is convex.

Choose for each connected component 
of $\p\Sigma$ a collar neighborhood $(-1,0] \times S^1$ with
coordinates $(s,\phi)$, and enlarge $\Sigma$ by attaching 
$[0,1) \times S^1$ in the obvious way to each of these collars, 
denoting the resulting
surface by $\widehat{\Sigma}$.  If $\boldsymbol{\pi}$ admits a smooth
overlap, then this can be done so that there is also an
open neighborhood $\uU\spine$ of $\p M\spine$ in $M$ which we can
identify with $(-1,1) \times \p M\spine$ such that the fibration
$$
\uU\spine = (-1,1) \times \p M\spine \to (-1,1) \times S^1 \colon
(s,x) \mapsto (s,\pi\paper(x))
$$
matches $\pi\spine$ on $\uU\spine \cap M\spine$, which is the
region $\{ s \le 0 \}$.  In fact, this defines
an extended fibration
$$
\hat{\pi}\spine \colon \widehat{M}\spine \to \widehat{\Sigma},
$$
where $\widehat{M}\spine := M\spine \cup \uU\spine$.
We shall continue to denote the coordinates on the collars
$(-1,1) \times S^1 \subset \widehat{\Sigma}$ by $(s,\phi)$ and, in light
of the compatibility of the two fibrations, also use
$\phi \in S^1$ to denote the coordinate on the base of
$\pi\paper \colon M\paper \to S^1$.

Choose a Liouville form
$\sigma$ on $\widehat{\Sigma}$ that matches $e^s\, d\phi$ on the collars
$(-1,1) \times S^1$.  For convenience, we can also fix an identification
of the (necessarily trivial) bundle $\widehat{M}\spine \to \widehat{\Sigma}$ 
with $\widehat{\Sigma} \times S^1$ such that $\hat{\pi}\spine(z,\theta) = z$.
This identifies each connected component of $\uU\spine$ with
$(-1,1) \times S^1 \times S^1$, carrying coordinates $(s,\phi,\theta)$.
In these coordinates on the collar $M\paper \cap \uU\spine \cong
[0,1) \times S^1 \times S^1$ we have $\pi\paper(s,\phi,\theta) = \phi$.

To keep orientations straight, it will also be convenient to
define an alternative coordinate system on $M\paper \cap \uU\spine$ by
$$
(t,\phi,\theta) := (-s,\phi,\theta) \in (-1,0] \times S^1 \times S^1 \subset 
M\paper \cap \uU\spine.
$$
This has the advantage that $(t,\theta) \in (-1,0] \times S^1$ now defines a
set of positively oriented collar neighborhoods of the boundary of each page.
Note that the monodromy of the bundle $\pi\paper \colon M\paper \to S^1$ cannot
be assumed trivial near the boundary, but up to isotopy we can still assume
that it takes the form $(t,\theta) \mapsto (t,\theta)$ in the above 
collars while also permuting boundary components.
With this understood, the following lemma is proved by a standard argument
(cf.~\cite{Etnyre}).

\begin{lemma}
\label{lemma:fiberLiouville}
On $M\paper$ there exists a $1$-form $\eta$ such that $d\eta$ is positive on
each fiber of $\pi\paper \colon M\paper \to S^1$ and, in the collar neighborhoods
of $\p M\paper$ with coordinates $(t,\phi,\theta)$ as defined above, 
$\eta = e^t \, d\theta$.  \qed
\end{lemma}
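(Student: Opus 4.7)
The plan is to construct $\eta$ in two stages: first build a primitive $\lambda$ of a positive area form on a single page $F$ that restricts to $e^t\, d\theta$ in the boundary collars, then assemble $\eta$ across the mapping torus structure of $\pi\paper$.

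For the first stage, I would produce a $1$-form $\lambda$ on a compact connected surface $F$ with non-empty boundary such that $d\lambda > 0$ on $F$ and $\lambda = e^t\, d\theta$ in each boundary collar. Start with any smooth extension $\lambda_c$ of $e^t\, d\theta$ to all of $F$ and a positive area form $\omega$ on $F$ matching $e^t\, dt \wedge d\theta$ in the boundary collars. Then $\omega - d\lambda_c$ is a smooth $2$-form supported away from $\p F$, and by adjusting $\omega$ in the interior of $F$ (which does not affect its collar form) one can arrange via Stokes' theorem that $\int_F (\omega - d\lambda_c) = 0$. Since $F$ has non-empty boundary, a compactly supported $2$-form on $\mathrm{int}\,F$ of zero integral is exact with compact support, so $\omega - d\lambda_c = d\nu$ for some $\nu$ compactly supported in $\mathrm{int}\,F$; then $\lambda := \lambda_c + \nu$ is the desired primitive.

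For the second stage, I would use the mapping torus description $M\paper \cong F \times [0,1]\,/\,((x,1) \sim (\psi(x),0))$, where by the normalization stated just before the lemma the monodromy $\psi \colon F \to F$ acts as the identity on each boundary collar in $(t,\theta)$-coordinates (up to a permutation of boundary components). In particular $\psi^*\lambda = \lambda = e^t\, d\theta$ in the collars. On $F \times [0,1]$ I define
\[
\eta := (1-\phi)\,\lambda + \phi\, \psi^*\lambda,
\]
viewed as a $1$-form with no $d\phi$ component. Its fiberwise exterior derivative equals $(1-\phi)\, d\lambda + \phi\, \psi^*(d\lambda)$, a convex combination of two positive area forms, hence positive on every fiber. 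The gluing condition at the endpoints of $[0,1]$ amounts to $\eta|_{\phi=1} = \psi^*(\eta|_{\phi=0})$, i.e. $\psi^*\lambda = \psi^*\lambda$, which holds trivially; hence $\eta$ descends to a smooth $1$-form on $M\paper$. In the collars $(t,\phi,\theta)$ near $\p M\paper$ the $\phi$-dependence drops out and $\eta = e^t\, d\theta$, as required.

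The main technical point is the first stage: one must simultaneously prescribe $\lambda = e^t\, d\theta$ near $\p F$ \emph{and} achieve $d\lambda > 0$ on all of $F$, which forces the normalization of $\int_F \omega$ that makes $\omega - d\lambda_c$ an exact, compactly supported $2$-form. Everything in the second stage is formal once $\lambda$ is in hand, because the set of $1$-forms on $F$ whose differential is a positive area form is convex and $\psi$ is the identity in the boundary collars. This is a direct adaptation of the Thurston--Winkelnkemper construction (cf.~\cite{Etnyre}) to the mapping torus setting, where the bundle $\pi\paper$ need not be trivial.
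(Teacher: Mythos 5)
Your proof is correct and reproduces the standard Thurston--Winkelnkemper argument that the paper itself cites (via \cite{Etnyre}) rather than writing out: construct a fiberwise primitive $\lambda$ on a page with prescribed collar form (using $H^2_c(\mathrm{int}\,F)\cong\RR$ to kill the compactly supported error term), then interpolate convexly across the mapping torus, using that the monodromy is the identity in the $(t,\theta)$-collar coordinates so the interpolation is constant near $\p M\paper$. Both the normalization $\int_F\omega=\int_{\p F}\lambda_c$ and the convexity of the set of fiberwise Liouville forms are the two genuine ideas, and you have them; this is exactly the intended argument.
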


We can now construct a fiberwise Giroux form.  Let
$F \colon M\paper \to (0,1]$
denote a smooth function which is identically~$1$ outside of
$\uU\spine$ and takes the form $e^s f(s)$ in the collar 
coordinates $(s,\phi,\theta) \in \uU\spine$, where 
$f \colon (-1,1) \to (0,1]$ is a smooth function satisfying the conditions
\begin{itemize}
\item $f(s) = 1$ for $s \le 0$,
\item $f'(s) < 0$ for $s > 0$,
\item $f(s) = e^{-s}$ for $s$ near~$1$.
\end{itemize}
Now if $\eta$ is given by Lemma~\ref{lemma:fiberLiouville}, the expression
$$
\alpha = 
\begin{cases}
d\theta & \text{ on $M\spine$}, \\
F\eta & \text{ on $M\paper$}
\end{cases}
$$
defines a fiberwise Giroux form on~$M$.

We will use a version of the Thurston trick to turn fiberwise
Giroux forms into Giroux forms.  Given a constant $\delta \in (0,1]$,
choose a smooth function $g_\delta \colon [0,\infty) \to [0,2]$ with
\begin{itemize}
\item $g_\delta(s) = e^s$ for $s$ near~$0$,
\item $g_\delta'(s) \ge 0$ for all~$s$,
\item $g_\delta(s) = 2$ for all $s \ge \delta$,
\end{itemize}
and define from this a smooth function
$G_\delta \colon M\paper \to [0,2]$ by
$$
G_\delta = 
\begin{cases}
2 & \text{ on $M\paper \setminus \uU\spine$}, \\
g_\delta(s) & \text{ for $(s,\phi,\theta) \in \uU\spine$}.
\end{cases}
$$
Then identifying the Liouville form $\sigma$ on $\Sigma$ with its
pullback $\pi\spine^*\sigma$ on $M\spine$, we define for any
$\delta \in (0,1]$ another smooth $1$-form on~$M$ by
$$
\beta_\delta = \begin{cases}
\sigma & \text{ on $M\spine$},\\
G_\delta\, d\phi & \text{ on $M\paper$}.
\end{cases}
$$

\begin{lemma}
\label{lemma:fiberwiseGiroux}
For any fiberwise Giroux form $\alpha$, there exist constants
$\delta_0 \in (0,1]$ and $K_0 \ge 0$ such that for all constants 
$\delta \in (0,\delta_0]$ and $K \ge K_0$,
$$
\alpha_{K,\delta} := \alpha + K \beta_\delta
$$
is a Giroux form.  Whenever $\alpha$ itself is a Giroux form,
one can take $K_0 = 0$.
\end{lemma}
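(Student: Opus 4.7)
The plan is to verify the three defining properties of a Giroux form for $\alpha_{K,\delta}$: the contact condition, positivity of $d\alpha_{K,\delta}$ on pages, and positive tangency of the Reeb field of $\alpha_{K,\delta}$ to the fibers of $\pi\spine$. A direct check gives $\beta_\delta \wedge d\beta_\delta \equiv 0$ (it is either pulled back as a 3-form from the 2-surface $\Sigma$, or contains a factor of $d\phi \wedge d\phi$), so
\[
\alpha_{K,\delta} \wedge d\alpha_{K,\delta} = \alpha \wedge d\alpha + K\,\omega, \qquad \omega := \alpha \wedge d\beta_\delta + \beta_\delta \wedge d\alpha.
\]
The page and Reeb conditions are automatic. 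Since every page is a level set of $\phi$, one has $\beta_\delta|_{\text{page}} = 0$ and therefore $d\alpha_{K,\delta}|_{\text{page}} = d\alpha|_{\text{page}} > 0$. On $M\spine$, the form $d\beta_\delta$ (equal to $\pi\spine^*d\sigma$ away from the collar or $G_\delta'(s)\,ds\wedge d\phi$ on it) has no $d\theta$ component, so $\iota_{\partial_\theta} d\alpha_{K,\delta} = \iota_{\partial_\theta} d\alpha = 0$; combined with $\alpha_{K,\delta}(\partial_\theta) = \alpha(\partial_\theta) > 0$, this makes $R_{\alpha_{K,\delta}}$ a positive multiple of $\partial_\theta$ along the spine fibers.

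The crux is to prove $\omega$ is a strictly positive volume form on $M$ (for $\delta$ sufficiently small), for then the bounded term $\alpha \wedge d\alpha$ on the compact manifold $M$ is dominated by $K\omega$ once $K \ge K_0$ is large. Region by region: on $M\spine \setminus \uU\spine$ one computes $\omega = \alpha(\partial_\theta)\,d\theta \wedge \pi\spine^*d\sigma > 0$; on $M\paper \setminus \uU\spine$, $d\beta_\delta = 0$ and $\omega = 2\,d\phi \wedge d\alpha > 0$ by page-positivity of $d\alpha$; and on $\uU\spine \cap M\spine$, the fiberwise Giroux identity $\iota_{\partial_\theta} d\alpha = 0$ kills $\beta_\delta \wedge d\alpha$ and leaves $\omega = \alpha(\partial_\theta)\,e^s\,ds \wedge d\phi \wedge d\theta > 0$.

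The main obstacle is the collar slab $\uU\spine \cap M\paper = \{0 \le s < 1\}$. Writing $\alpha = a\,ds + b\,d\phi + c\,d\theta$ with $c = \alpha(\partial_\theta)$, a short calculation produces
\[
\omega = \bigl(G_\delta'(s)\,c + G_\delta(s)\,(a_\theta - c_s)\bigr)\, ds \wedge d\phi \wedge d\theta,
\]
in which $G_\delta \ge 1$ and $a_\theta - c_s > 0$, the latter coming from the positivity of $d\alpha$ on pages (which carry the orientation $dt \wedge d\theta = -ds \wedge d\theta$). The a priori dangerous term is $G_\delta'(s)\,c$: since $G_\delta'$ can be of order $1/\delta$ on $\{0 \le s \le \delta\}$, if $c$ were negative anywhere on this slab, no choice of $K$ would rescue the contact condition. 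The resolution is a continuity argument: $c$ is strictly positive on the compact set $M\spine$, so $c$ remains positive on an open neighborhood of $\{s = 0\}$, say $\{-\epsilon < s < \epsilon\}$ for some $\epsilon > 0$; setting $\delta_0 := \epsilon$, for any $\delta \le \delta_0$ both summands of $\omega$ on $\{0 \le s \le \delta\}$ are non-negative and positive in aggregate. Combining regions yields a uniform positive lower bound on $\omega$, and a compactness argument then selects $K_0$ so that $\alpha_{K,\delta} \wedge d\alpha_{K,\delta} > 0$ on all of $M$ for $K \ge K_0$. The final clause about $K_0 = 0$ when $\alpha$ is itself a Giroux form is immediate from the $K = 0$ case.
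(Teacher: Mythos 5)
Your proof is correct and follows essentially the same route as the paper's: the reduction $\alpha_{K,\delta}\wedge d\alpha_{K,\delta}=K\omega+\alpha\wedge d\alpha$, the region-by-region positivity of $\omega:=\alpha\wedge d\beta_\delta+\beta_\delta\wedge d\alpha$, and the key continuity argument that $\alpha(\partial_\theta)>0$ propagates from $\{s=0\}$ to a collar $\{0\le s\le\delta_0\}$ so that the $g_\delta'$-term cannot ruin positivity. The only cosmetic difference is that you verify the Reeb condition by hand and collapse the two 3-forms into one coefficient $G_\delta'c+G_\delta(a_\theta-c_s)$, whereas the paper notes once that $\alpha_{K,\delta}$ remains a fiberwise Giroux form (so contactness suffices) and keeps the two summands of $\omega$ separate.
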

\begin{proof}
Observe that $\alpha_{K,\delta}$ is automatically a \emph{fiberwise} Giroux
form for all $K \ge 0$, $\delta \in (0,1]$, so we only need to show that
$\alpha_{K,\delta}$ is contact for the right choices of these constants.
Since $\beta_\delta \wedge d\beta_\delta \equiv 0$, we have
$$
\alpha_{K,\delta} \wedge d\alpha_{K,\delta} = K \left( \alpha \wedge d\beta_\delta +
\beta_\delta \wedge d\alpha\right) + \alpha \wedge d\alpha,
$$
thus it suffices to show that whenever $\delta > 0$ is sufficiently small,
\begin{equation}
\label{eqn:isContact}
\alpha \wedge d\beta_\delta + \beta_\delta \wedge d\alpha > 0.
\end{equation}
The conditions on fiberwise Giroux forms imply that $\alpha(\p_\theta) > 0$
at $\p M\paper$, so this is also true on collars of the form
$\{ s \le \delta_0 \} \subset \uU\spine$ for sufficiently
small $\delta_0 > 0$.  Assuming $0 < \delta \le \delta_0$, we shall now
show that \eqref{eqn:isContact} holds everywhere on~$M$.

On $M\spine$, $\beta_\delta \wedge d\alpha = \sigma \wedge d\alpha = 0$
since $\sigma(\p_\theta) = d\alpha(\p_\theta,\cdot) = 0$, but
$\alpha \wedge d\beta_\delta > 0$ since $\alpha(\p_\theta) > 0$ and
$d\beta_\delta = d\sigma$ is positive on~$\Sigma$.

On $M\paper$ outside of the collars $\{ s \le \delta\}$, we have
$\beta_\delta = 2\, d\phi$ and thus $d\beta_\delta = 0$, while
$\beta_\delta \wedge d\alpha = 2\, d\phi \wedge d\alpha > 0$ due to the
assumption that $d\alpha$ is positive on the fibers of~$\pi\paper$.

On the collars $\{ s \le \delta\}$, we have $\beta_\delta = G_\delta\, d\phi$,
with $G_\delta > 0$ on the interior of $M\paper$, hence
$\beta_\delta \wedge d\alpha = G_\delta\, d\phi \wedge d\alpha > 0$ again
except at $\p M\paper$.  It thus remains only to show that 
$\alpha \wedge d\beta_\delta \ge 0$, with strict positivity at $\p M\paper$.
This follows from the fact that $\alpha(\p_\theta) > 0$ on this region,
since $\alpha \wedge d\beta_\delta = g_\delta'(s)\, \alpha \wedge
ds \wedge d\phi$, where $g_\delta'(s)$ was assumed to be nonnegative and
strictly positive at $s=0$.
\end{proof}

The above implies Theorem~\ref{thm:GirouxForms}: indeed, since the space of
fiberwise Giroux forms is nonempty and convex, Lemma~\ref{lemma:fiberwiseGiroux}
shows that Giroux forms exist, and for any integer $n \ge 0$, a continuous
$S^{n}$-parameterized family of Giroux forms can be contracted through Giroux
forms.  It follows by Whitehead's theorem that the space of Giroux forms
is contractible.

\subsection{Lefschetz fibrations and Stein structures}
\label{sec:appendixStein}

In this section, we take $\Pi \colon E \to \Sigma$ to be a Lefschetz fibration
as described in the discussion preceding Theorem~\ref{thm:LefschetzStein}.
This naturally gives rise to a
spinal open book on $\p E$, with spine $M\spine := \p_h E$ and paper 
$M\paper := \p_v E$.
The fibration $\pi\paper \colon \p_v E \to S^1$ is defined as the restriction 
$\Pi|_{\p_v E} \colon \p_v E \to \p\Sigma$ after choosing an orientation preserving
identification of each connected component of~$\p\Sigma$ with~$S^1$.
Likewise, $\Pi|_{\p_h E} \colon \p_h E \to \Sigma$ defines a smooth fibration
whose fibers are disjoint unions of finitely many circles, hence it can be 
factored as
$$
\p_h E \stackrel{\pi\spine}{\longrightarrow} \widetilde{\Sigma} 
\stackrel{p}{\longrightarrow} \Sigma,
$$
where $\pi\spine \colon \p_h E \to \widetilde{\Sigma}$ is a fiber bundle with 
connected fibers over another compact oriented surface 
$\widetilde{\Sigma}$ with boundary, and $p \colon \widetilde{\Sigma} \to \Sigma$
is a smooth finite covering map.  After smoothing the corner at 
$\p_h E \cup \p_v E$, this construction gives rise to a unique isotopy class of 
spinal open books admitting smooth overlaps.

To construct Stein structures on~$E$, we will consider a special class
of almost complex structures
that always admit plurisubharmonic functions, thus giving rise to a 
distinguished deformation class of Weinstein structures.  This in turn
yields a canonical deformation class
of Stein structures due to a theorem of Eliashberg \cite{CiEl}.
Recall that a function $f \colon W \to \RR$ on an almost complex manifold
$(W,J)$ is called \defin{$J$-convex} if the $1$-form
$\lambda := - df \circ J$ is the primitive of a symplectic form that tames~$J$.
We will make repeated use of the standard fact that \emph{every} complex
structure $J$ on a compact and connected surface with nonempty boundary
admits a $J$-convex function which has the boundary as a regular level
set.  Indeed, such a function can be found by starting with a Morse
function that is $J$-convex near its critical points and post-composing with
a positive function with large second derivative (see 
e.g.~\cite[Lemma~4.1]{LatschevWendl}); in this way, one can also choose
the function's value and normal derivative at the boundary to be arbitrarily
large.

Denote by $\jJ(\Pi)$ the space of smooth almost complex structures $J$ on~$E$
that are compatible with its orientation and satisfy the following conditions:
\begin{enumerate}
\item There exists a smooth complex structure $j$ on $\Sigma$, compatible with the
given orientation, such that $\Pi \colon (E,J) \to (\Sigma,j)$ is pseudoholomorphic.
\item $J$ is integrable on some neighborhood of $E\crit$.
\item The maximal $J$-complex subbundle in $T(\p_h E)$ is preserved by some
smooth $S^1$-action on $\p_h E$ which restricts to a free and transitive
action on each boundary component of each fiber~$E_z$.
\end{enumerate}
Observe that any $J \in \jJ(\Pi)$ makes the fibers into $J$-complex curves,
with the induced orientation matching their natural orientation.
An element of $\jJ(\Pi)$ can be constructed by picking complex Morse coordinates
near $E\crit$, then choosing a suitable horizontal subbundle
outside this neighborhood which is $S^1$-invariant at $\p_h E$,
and extending the resulting complex structures
on the vertical and horizontal subbundles globally.  Since both are oriented
bundles of real rank~$2$, the space $\jJ(\Pi)$ is contractible.

Given $J \in \jJ(\Pi)$, we will say that a 
$J$-convex function $f \colon E \to \RR$ is \defin{admissible} if
the Liouville form $\lambda := - df \circ J$ restricts to a contact form
on both of the smooth boundary faces $\p_h E$ and $\p_v E$, such that
for all $z \in \Sigma$, $\p E_z \subset \p_h E$ is a union of closed Reeb 
orbits.
Observe that since $J$ is tamed by the symplectic form $d\lambda$, this
construction automatically makes the fibers symplectic, including the
pages in $\p_v E$ of the induced spinal open book at the boundary, and in
this sense one can reasonably say that $\lambda$ \emph{restricts to a 
Giroux form} on~$\p E$.  The contact condition implies that the 
induced Liouville vector field at $\p E$ is outwardly transverse to both
smooth faces, hence one can smooth the corner so that the Liouville vector
field is also transverse to the smoothened boundary, and in so doing one
can arrange for $\lambda$ to be a Giroux form for the resulting spinal
open book with smooth overlap.  
Moreover, the Liouville vector field is 
gradient-like with respect to~$f$, and one can then homotop $f$ near the
smoothened boundary through Lyapunov functions to make the smoothened
boundary a regular level set, producing a Weinstein structure uniquely
up to Weinstein homotopy.
In this way, any choice of admissible $J$-convex function $f$
determines a homotopy class of Weinstein structures which fill the contact 
structure supported by the spinal open book at the boundary.

The above discussion reduces the proof of 
Theorem~\ref{thm:LefschetzStein} to the following:

\begin{proposition}
\label{prop:PSH}
If $\Pi \colon E \to \Sigma$ is allowable, then
for every $J \in \jJ(\Pi)$, the space of admissible $J$-convex functions
is nonempty and contractible.
\end{proposition}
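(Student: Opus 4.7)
The plan is to adapt the Thurston-style trick of Lemma~\ref{lemma:fiberwiseGiroux} to the Stein setting, constructing $f$ in the form
\[
f = h + K (g \circ \Pi),
\]
where $g \colon \Sigma \to \RR$ is a $j$-convex function with $\p\Sigma$ as a regular level set, $h \colon E \to \RR$ is plurisubharmonic along each fiber with $\p_h E$ as a fiberwise regular level set (and $S^1$-invariant in a collar of $\p_h E$), and $K > 0$ is a sufficiently large constant. The existence of $g$ is the standard fact cited in the discussion preceding the proposition, so the real construction task is to produce a suitable $h$.

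Constructing $h$ is where allowability enters decisively: because every irreducible component of every (possibly nodal) fiber $E_z$ meets $\p_h E$, each complex curve $E_z$ carries a $J|_{E_z}$-convex function with $\p E_z$ as a regular level set, and I would assemble these into a smooth $h \colon E \to \RR$ using a partition of unity on $\Sigma$, using complex Morse coordinates and the standard model $|z_1|^2 + |z_2|^2$ near $E\crit$ (where $J$ is integrable by hypothesis). To arrange $S^1$-invariance in a collar of $\p_h E$, I would first homotop $J$ within the contractible space $\jJ(\Pi)$ so that it is itself $S^1$-equivariant in such a collar --- essentially a refinement of the defining property of $\jJ(\Pi)$ that the $S^1$-action preserves the complex tangencies --- and then average the initial $h$ over the $S^1$-action, which preserves fiberwise strict plurisubharmonicity.

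For the admissibility of $f = h + K(g \circ \Pi)$, I would verify each defining condition in turn. The $J$-convexity is the Gompf computation: $-dJ^*df$ splits as the sum of a two-form strictly positive on vertical directions (from $h$) and the pullback via the pseudoholomorphic $\Pi$ of a strictly positive form on $\Sigma$ (from $g$), and for $K \gg 0$ this sum is positive-definite on all of $TE$. On $\p_v E = \Pi^{-1}(\p\Sigma)$ the contact condition is dominated by the $K$-term exactly as in Lemma~\ref{lemma:fiberwiseGiroux}. On $\p_h E$, the $S^1$-invariance of both $f$ and $J$ in a collar makes $\lambda = -df \circ J$ itself $S^1$-invariant; one checks that $JR_{S^1}$ lies in the fiber and points outward across $\p_h E$ along $\p E_z$, so $\lambda(R_{S^1}) > 0$ and $R_{S^1}$ is transverse to $\xi := \ker \lambda$. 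Then $S^1$-invariance together with a further normalization of $h$ near $\p_h E$ (so that $\lambda(R_{S^1})$ is constant along each $\p E_z$) forces $i_{R_{S^1}} d\lambda = 0$ on $T\p_h E$, making $R_{S^1}$ positively proportional to the Reeb field and each $\p E_z$ a closed Reeb orbit as required.

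The main obstacle will be arranging the $S^1$-equivariance of $J$ and the normalization of $\lambda(R_{S^1})$ simultaneously with strict fiberwise plurisubharmonicity through the singular fibers, which is a partition-of-unity argument that I expect to require some care near the nodes. For contractibility of the space $\mathcal{A}$ of admissible $J$-convex functions, I would observe that $J$-convexity and the Reeb-orbit condition $i_{R_{S^1}} d\lambda = 0$ are convex (indeed linear) in $f$, and that once the Reeb condition holds, the contact positivity on $\p_h E$ factors as $\lambda \wedge d\lambda = \lambda(R_{S^1}) \cdot d\lambda|_\xi$, a product of expressions positive and convex in $f$, so it is preserved under convex combinations. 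A parallel factorization on $\p_v E$, together with the observation that $f + s(g \circ \Pi) \in \mathcal{A}$ for all admissible $f$ and all $s \geq 0$ (giving a global outward direction in $\mathcal{A}$), then exhibits $\mathcal{A}$ as star-convex, hence contractible.
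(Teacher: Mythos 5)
Your overall architecture matches the paper's: construct a fiberwise $J$-convex function using the local model $\tfrac{1}{2}(|z_1|^2+|z_2|^2)$ near $E\crit$, use allowability to complete each singular fiber, assemble by a partition of unity on $\Sigma$, and then apply the Thurston trick by adding a large multiple of a $j$-convex function pulled back from $\Sigma$. Two of your technical subroutines, however, have genuine gaps, and in both places the paper takes a different (and necessary) route.

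First, homotoping $J$ inside $\jJ(\Pi)$ to an $S^1$-equivariant structure and then averaging $h$ does not prove the statement as written: the conclusion is asserted for \emph{every} $J\in\jJ(\Pi)$, and once you replace $J$ you have constructed an admissible function for a different almost complex structure. You would then owe a separate (parametric) argument that the space of admissible functions has constant homotopy type along a path in $\jJ(\Pi)$. The paper never averages and never touches $J$. Instead it normalizes the fiberwise functions at the boundary --- $f_z\equiv c_z$ and $df_z(-J\p_\theta)\equiv\nu_z$ on $\p E_z$ --- which forces $\alpha^h(\p_\theta)\equiv\nu$ and $\alpha^h|_{\xi_h}\equiv 0$. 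Since the $S^1$-action is assumed to preserve $\xi_h$, and $J|_{\xi_h}=\Pi^*j$ is automatically $S^1$-invariant, this already makes $\alpha^h$ invariant under the flow of $\p_\theta$, so $d\alpha^h(\p_\theta,\cdot)\equiv 0$ comes for free.

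Second, your contractibility argument does not go through. The factorization of the contact positivity on $\p_h E$ into two linear-in-$f$, positive factors is fine \emph{once} the Reeb condition $i_{\p_\theta}d\lambda=0$ is imposed, but there is no "parallel factorization" on $\p_v E$: the form $\alpha^v=\lambda|_{T(\p_v E)}$ has no reason to vanish on page-tangent vectors, so $\alpha^v\wedge d\alpha^v$ is genuinely quadratic in $f$ and its positivity is not preserved under convex combinations of admissible functions. Moreover, having the outward ray $f\mapsto f+s(g\circ\Pi)$ available at every point does not make a set star-convex (an annulus has a global outward direction). The paper's argument introduces the strictly larger, genuinely convex space of \emph{admissibly fiberwise} $J$-convex functions --- defined by fiberwise taming of $J$ together with the linear conditions $\alpha^h(\p_\theta)>0$ and $i_{\p_\theta}d\alpha^h=0$ on $\p_h E$, with no contact requirement on $\p_v E$ --- contracts any sphere of admissible functions inside this convex space, and then adds $K(\varphi\circ\Pi)$ with $K$ large to push the entire contraction into the admissible locus. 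This shows all $\pi_n$ vanish, and contractibility follows from Whitehead's theorem, without ever asserting that the admissible space itself is convex or star-shaped.
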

\begin{proof}
We proceed in three steps.

\textsl{Step~1: Existence of a fiberwise $J$-convex function.}
Given $J \in \jJ(\Pi)$,
let us call a smooth function $f \colon E \to \RR$ \defin{admissibly fiberwise
$J$-convex} if the $1$-form $\lambda := -df \circ J$ has the following
properties:
\begin{enumerate}
\item At $E\crit$, $d\lambda$ is symplectic and tames~$J$,
\item On $E \setminus E\crit$, $d\lambda$ is symplectic on every fiber,
\item For all $z \in \Sigma$, the tangent spaces to $\p E_z \subset \p_h E$
are positive for $\lambda$ but in the kernel of $d\lambda|_{T (\p_h E)}$.
\end{enumerate}
The space of admissibly fiberwise $J$-convex functions is convex and thus
contractible.  Such a function is admissibly $J$-convex if and only if
$d\lambda$ is a symplectic form taming~$J$ and $\lambda$ defines
contact forms on $\p_h E$ and~$\p_v E$.

Our first task is to construct an admissibly fiberwise $J$-convex function
$f \colon E \to \RR$.  By our assumptions on~$J$, there is a uniquely determined 
complex structure $j$ on~$\Sigma$ such that $\Pi \colon (E,J) \to (\Sigma,j)$
is pseudoholomorphic.  There is also a vertical vector field $\p_\theta$
on $\p_h E$ whose flow generates an $S^1$-action that preserves the
maximal $J$-complex subbundle
$$
\xi_h := \{ v \in T(\p_h E)\ |\ Jv \in T(\p_h E) \} \subset T(\p_h E).
$$
Note that $\Pi$ is $J$--$j$ holomorphic so $J|_{\xi_h} = \Pi^*j$, hence it is automatic
that the flow of $\p_\theta$ also preserves $J|_{\xi_h}$.  We assume
$\p_\theta$ is positive with respect to the boundary orientation of each
fiber, so $-J\p_\theta$ points transversely outwards.

To construct the desired function $f \colon E \to \RR$, we begin by
choosing for each $z \in \Sigma \setminus \Sigma\crit$ a $J$-convex
function $f_z \colon E_z \to \RR$ which at $\p E_z$ satisfies
$f_z \equiv c_z$ and $d f_z(-J \p_\theta) = \nu_z$ for some constants
$c_z , \nu_z > 0$.  We can then find a neighborhood
$\uU_z \subset \Sigma \setminus \Sigma\crit$ containing $z$ such that
$f_z$ admits an extension to a smooth function $f_z \colon E|_{\uU_z} \to \RR$
having these same properties on every fiber in $E|_{\uU_z}$.
Observe that the constants $c_z$ and $\nu_z$ can always be
made larger without changing the choice of neighborhood~$\uU_z$.
The $1$-form $\lambda_z := - df_z \circ J$ on $E|_{\uU_z}$ now satisfies
$d\lambda_z|_{T E_z} > 0$ for each $z \in \uU_z$, and its restriction
to the horizontal boundary $\alpha^h_z := \lambda_z|_{T(\p_h E)}$
satisfies $\alpha^h_z(\p_\theta) = \nu_z$, $\alpha^h_z|_{\xi_h} = 0$.

We next construct similar functions near the singular fibers.
For $z \in \Sigma\crit$, let $E\crit_z$ denote the finite set of critical
points in the fiber~$E_z$.  For each $p \in E\crit_z$, fix a neighborhood
$\uU_p \subset E$ containing $p$ on which $J$ is integrable, and choose
holomorphic coordinates $(z_1,z_2)$ identifying $\uU_p$ with a neighborhood
of~$0$ in~$\CC^2$ such that $\Pi(z_1,z_2) = z_1^2 + z_2^2$ for a suitable
choice of holomorphic coordinate near $\Pi(p) \in \Sigma$.
We use these coordinates to define a function $f_z \colon \uU_p \to \RR$ by
$$
f_z(z_1,z_2) = \frac{1}{2} \left( |z_1|^2 + |z_2|^2 \right).
$$
Then $-d f_z \circ J$ is the primitive of a positive symplectic form 
in~$\uU_p$ which tames~$J$ and restricts symplectically to the vertical 
subspaces. Since $\Pi$ is allowable, the connected components of $E_z \setminus
E\crit_z$ are all compact oriented surfaces with nonempty boundary and
finitely many punctures.  It follows that $f_z$ can be extended
so that it is $J$-convex on $E_z$ and satisfies
$f_z \equiv c_z$, $d f_z(-J \p_\theta) \equiv \nu_z$ at $\p E_z$ for some
large constants $c_z , \nu_z > 0$.  Since the $J$-convexity condition is open,
we can then extend $f_z$ over $E|_{\uU_z}$ for some neighborhood
$z \in \uU_z \subset \Sigma$ so that it has these same properties on
each fiber, and the constants $c_z, \nu_z$ can be made larger if desired 
without changing~$\uU_z$.

Since $\Sigma$ is compact, there is a finite subset $I \subset \Sigma$ such that
the neighborhoods $\uU_z$ constructed above for $z \in I$ cover~$\Sigma$.
By making the functions $f_z$ more convex near $\p_h E$,
we can then increase the constants $c_z > 0$ for all $z \in I$ so that they 
match a single constant $c > 0$, and likewise increase $\nu_z$ for
$z \in I$ to match some large number $\nu > 0$.
Choose a partition of unity $\{ \rho_z \colon \uU_z \to [0,1] \}_{z \in I}$
subordinate to the covering $\{ \uU_z \}_{z \in I}$, and define 
$f \colon E \to \RR$ by
$$
f = \sum_{z \in I} (\rho_z \circ \Pi) f_z.
$$
If $\lambda = -df \circ J$, we now have $d\lambda$ positive on all fibers,
while $d\lambda$ is symplectic and tames~$J$ near $E\crit$, and the restriction
$\alpha^h := \lambda|_{T(\p_h E)}$ to the horizontal boundary satisfies
$$
\alpha^h(\p_\theta) \equiv \nu > 0, \qquad \alpha^h|_{\xi_h} \equiv 0.
$$
It follows that $\alpha^h$ is invariant under the flow of $\p_\theta$, thus
$$
0 \equiv \Lie_{\p_\theta} \alpha^h \equiv d\alpha^h(\p_\theta,\cdot).
$$

\textsl{Step~2: The Thurston trick.}
Suppose $f \colon E \to \RR$ is any admissibly fiberwise $J$-convex
function and denote $\lambda = -df \circ J$.
Choose a $j$-convex function
$\varphi \colon \Sigma \to \RR$ which has $\p\Sigma$ as a regular level set.
Let $\sigma := -d\varphi \circ j$
denote the resulting Liouville form on~$\Sigma$.  For any constant $K \ge 0$,
consider the function
$$
F_K := f + K(\varphi \circ \Pi) \colon E \to \RR.
$$
We claim that this is admissibly $J$-convex whenever $K$ is sufficiently
large, and that this is also true for all $K \ge 0$ if $f$ itself is
admissibly $J$-convex.  Indeed, since $\Pi \colon (E,J) \to (\Sigma,j)$ is
pseudoholomorphic, we find
$$
\Lambda_K := - d F_K \circ J = \lambda + K \Pi^*\sigma.
$$
Choose a neighborhood $\uU\crit \subset E$ of $E\crit$ on which
$J$ is integrable and $d\lambda$ is a symplectic form taming~$J$.
Then for any nonzero vector $X \in T\uU\crit$,
\begin{equation}
\label{eqn:dLambda}
d\Lambda_K(X,JX) = d\lambda(X,JX) + K \, d\sigma(\Pi_*X, j\Pi_*X)
\end{equation}
is positive; here we've used the fact that $\Pi$ is $J$-$j$-holomorphic and
$d\sigma$ tames~$j$, implying that the second term is nonnegative.

To see that $d\Lambda_K$ also tames $J$ on $E \setminus \uU\crit$,
observe that the second term in \eqref{eqn:dLambda} is always nonnegative,
and is positive for $K>0$ if and only if the vector~$X$ is not vertical.
Likewise, the first term in \eqref{eqn:dLambda} is positive for nonzero vertical
vectors~$V$ and therefore also for all nonzero vectors in some open neighborhood
of the vertical subbundle.  It follows that the sum can always be made
positive if $K$ is sufficiently large.  Moreover, if $f$ is $J$-convex then
the first term is positive for any $X \ne 0$, 
and the sum is then positive for all $K \ge 0$.

It remains to check that the restrictions
$$
\alpha_K^v := \Lambda_K|_{T(\p_v E)}, \qquad
\alpha_K^h := \Lambda_K|_{T(\p_h E)}
$$
are both contact for suitable choices of $K \ge 0$.  
Let $\alpha^v := \lambda|_{T(\p_v E)}$.  Then since
$d\sigma$ vanishes on $T(\p\Sigma)$, $\Pi^*d\sigma$ vanishes on $\p_v E$,
implying
$$
\alpha_K^v \wedge d\alpha_K^v = (\alpha^v + K \Pi^*\sigma) \wedge
(d\alpha^v + K \Pi^*d\sigma) = \alpha^v \wedge d\alpha^v + K( \Pi^*\sigma \wedge d\alpha^v).
$$
Here, the second term is positive since $d\alpha^v > 0$ on fibers, thus
$\alpha_K^v$ is contact for all $K$ sufficiently large, and for all
$K \ge 0$ if $\alpha^v$ is contact; the latter is the case if $f$ is
admissibly $J$-convex.  Likewise
on $\p_h E$, we write $\alpha^h := \lambda|_{T(\p_h E)}$ and 
observe that $\Pi^*\sigma \wedge \Pi^*d\sigma$ vanishes for dimensional reasons,
while $\Pi^*\sigma \wedge d\alpha^h = 0$ since the vertical direction
is in $\ker d\alpha^h$.  Thus
$$
\alpha_K^h \wedge d\alpha_K^h = (\alpha^h + K \Pi^*\sigma) \wedge
(d\alpha^h + K \Pi^*d\sigma) = \alpha^h \wedge d\alpha^h + K( \alpha^h \wedge 
\Pi^*d\sigma).
$$
Once again the second term is positive, as $\alpha^h > 0$ in the vertical 
direction, and the contact condition for $\alpha_K^h$ follows.

\textsl{Step~3: Contractibility.}
The existence of an admissible $J$-convex function follows immediately by
combining steps~1 and~2.  Moreover, since the space of admissibly fiberwise 
$J$-convex functions is convex, step~2 implies that any continuous 
$S^n$-parameterized family of admissible $J$-convex functions is contractible, 
so the result follows via Whitehead's theorem.
\end{proof}



\vspace{0.3in}


\begin{thebibliography}{111}

\enlargethispage{0.5in}

\bibitem{AO}
\textbf{S. Akbulut}, \textbf{B. Ozbagci}, {\emph{Lefschetz fibrations on
  compact {S}tein surfaces}}, Geom. Topol. 5 (2001) 319--334 {MR}{1825664}; http://dx.doi.org/10.2140/gt.2001.5.319.

\bibitem{AO2}
\textbf{S. Akbulut}, \textbf{B. Ozbagci}, {\emph{On the topology of compact Stein surfaces}}, Int. Math. Res. Not. 2002, no. 15, 769–-782. 

\bibitem{AEMS} 
\textbf{A. Akhmedov}, \textbf{J. B. Etnyre}, \textbf{T. Mark}, \textbf{I. Smith} {\emph{A note on Stein fillings of contact manifolds}}, Math. Res. Lett. 15 (2008), no. 6, 1127–-1132. 

\bibitem{Avdek}
\textbf{R. Avdek}, {\emph{ Liouville hypersurfaces and connect sum cobordisms}}, preprint; arXiv:1204.3145.

\bibitem{Baldwin}
\textbf{J. Baldwin}, \emph{Contact monoids and Stein cobordisms}, Math. Res. Lett., 19 (2012), no. 1, 31--40.

\bibitem{BKM} 
\textbf{R. I. Baykur}, \textbf{M. Korkmaz}, \textbf{N. Monden}, {\emph{Sections of surface bundles and Lefschetz fibrations}}, to appear in Trans. Amer. Math. Soc.; arxiv.org/abs/1110.1224.

\bibitem{CiEl} \textbf{K. Cieliebak}, \textbf{Y. Eliashberg}, \emph{From Stein to Weinstein and back: symplectic geometry of affine complex manifolds}, Amer. Math. Soc. (2012), to appear.


\bibitem{El1}
\textbf{Y. Eliashberg}, {\emph{Topological characterization of {S}tein manifolds of dimension $>2$}},
  Internat. J. Math. 1 (1990) 29--46 {MR}{1044658}; {http://dx.doi.org/10.1142/S0129167X90000034}

\bibitem{El2}
\textbf{Y. Eliashberg}, {\emph{A few remarks about symplectic filling}}, Geom. Topol. 8 (2004) 277--293
  {MR}{2023279}; {http://dx.doi.org/10.2140/gt.2004.8.277}

\bibitem{ElGr}
\textbf{Y. Eliashberg}, \textbf{M. Gromov}, \emph{Convex symplectic manifolds},
  from: ``Several complex variables and complex geometry, Part 2 (Santa Cruz,
  CA, 1989)'', Proc. Sympos. Pure Math. 52, Amer. Math. Soc., Providence, RI
  (1991)  135--162 {MR}{1128541}

\bibitem{Endo}
\textbf{H. Endo}, {\emph{Meyer's signature cocycle and hyperelliptic fibrations}}, Math. Ann. 316 (2000), no. 2, 237–-257 {MR}{1741270}

\bibitem{E}
\textbf{J.\,B. Etnyre}, {\emph{Symplectic convexity in low-dimensional topology}}, Topology Appl. 88
  (1998) 3--25 {MR}{1634561}; {http://dx.doi.org/10.1016/S0166-8641(97)00196-X}
  
\bibitem{Etnyre}
\textbf{J.\,B. Etnyre}, \emph{Lectures on open book decompositions and contact structures}, Floer homology, gauge theory, and low-dimensional topology, Clay Math. Proc. 5, Amer. Math. Soc., Providence, RI (2006), 103--141 {MR}{2249250}

\bibitem{Geiges}
\textbf{H. Geiges}, \emph{An introduction to contact topology,} Cambridge Studies in Advanced Mathematics 109, Cambridge University Press, Cambridge, 2008. xvi+440 pp. {MR}{2397738}

\bibitem{Gi1}
\textbf{E. Giroux}, {\emph{Structures de contact en dimension trois et bifurcations des
  feuilletages de surfaces}}, Invent. Math. 141 (2000) 615--689 {MR}{1779622}; \newline {http://dx.doi.org/10.1007/s002220000082}

\bibitem{Gi2}
\textbf{E. Giroux}, \emph{G\'eom\'etrie de contact: de la dimension trois vers
  les dimensions sup\'erieures}, from: ``Proceedings of the International
  Congress of Mathematicians, Vol. II (Beijing, 2002)'', Higher Ed. Press,
  Beijing (2002)  405--414 {MR}{1957051}



\bibitem{Go2}
\textbf{R.\,E. Gompf}, \emph{Handlebody construction of {S}tein surfaces}, Ann.
  of Math. $(2)$ 148 (1998) 619--693 {MR}{1668563}

\bibitem{GS}
\textbf{R.\,E. Gompf}, \textbf{A.\,I. Stipsicz}, \emph{4--manifolds and {K}irby
  calculus}, Graduate Studies in Mathematics 20, American Mathematical Society,
  Providence, RI (1999) {MR}{1707327}

\bibitem{Gra}
\textbf{H. Grauert}, {\emph{On {L}evi's problem and the imbedding of real-analytic
  manifolds}}, Ann. of Math. $(2)$ 68 (1958) 460--472 {MR}{0098847}
  
\bibitem{Gromov}
\textbf{M. Gromov}, {\emph{Pseudoholomorphic curves in symplectic manifolds.}}, Invent. Math. {\bf 82} (1985),no. 2, 307--347 {MR}{809718}

\bibitem{Ho} 
\textbf{K. Honda}, {\emph{On the classification of tight contact structures. {I}}}, Geom. Topol. 4 (2000)
  309--368 {MR}{1786111}; {http://dx.doi.org/10.2140/gt.2000.4.309}

\bibitem{Ka}
\textbf{A. Kaloti}, {\emph{Stein fillings of planar open books}}, preprint in preparation

\bibitem{LatschevWendl}
\textbf{J. Latschev}, \textbf{C. Wendl}, {\emph Algebraic torsion in contact manifolds}, with an appendix by Michael Hutchings, Geom. Funct. Anal. 21 (2011) 5, 1144--1195 {MR}{2846386}


\bibitem{LVW} 
\textbf{S. Lisi}, \textbf{J. Van Horn-Morris}, \textbf{C. Wendl}, \emph{On symplectic fillings of spinal open book decompositions}, preprint in preparation. 

\bibitem{LP}
\textbf{A. Loi}, \textbf{R. Piergallini}, {\emph{Compact {S}tein
  surfaces with boundary as branched covers of {$B\sp 4$}}}, Invent. Math. 143
  (2001) 325--348 {MR}{1835390};  {http://dx.doi.org/10.1007/s002220000106}

 \bibitem{McDuff} \textbf{D. McDuff}, {\em The structure of rational and ruled symplectic 4--manifolds}, J. Amer. Math. Soc. {\bf 3} (1990), no. 3, 679--712, {MR}{1049697}

\bibitem{OS}
\textbf{B. Ozbagci}, \textbf{A.\,I. Stipsicz}, \emph{Contact $3$-manifolds with infinitely many Stein fillings,} Proc. Amer. Math. Soc. 132 (2004), no. 5, 1549–-1558

\bibitem{OS1}
\textbf{B. Ozbagci}, \textbf{A.\,I. Stipsicz}, \emph{Surgery on contact
  3--manifolds and {S}tein surfaces}, Bolyai Society Mathematical Studies 13,
  Springer, Berlin (2004) {MR}{2114165}


\bibitem{S} 
\textbf{A.\,I. Stipsicz},\emph{On the geography of Stein fillings of certain $3$-manifolds}, Michigan Math. J. 51 (2003), no. 2, 327–-337

\bibitem{TW}
\textbf{W.\,P. Thurston}, \textbf{H.\,E. Winkelnkemper}, \emph{On the existence of
  contact forms}, Proc. Amer. Math. Soc. 52 (1975) 345--347 {MR}{0375366}

\bibitem{Thurston}
\textbf{W.\,P. Thurston}, \emph{Some simple examples of symplectic manifolds}, Proc. Amer. Math. Soc. 55 (1976) 467--468 {MR}{0402764}
 
\bibitem{Torisu}
\textbf{I. Torisu}, \emph{Convex contact structures and fibered links in 3-manifolds}, Internat. Math. Res. Notices, (2000), no.~9, 441-454 {MR}{1756943}   


\bibitem{Weinstein}
\textbf{A. Weinstein}, \emph{Contact surgery and symplectic handlebodies}, Hokkaido
  Math. J. \textbf{20} (1991), no.~2, 241--251



\end{thebibliography}
\end{document}